\documentclass[11pt,reqno]{amsart} 

\makeatletter
\@namedef{subjclassname@2020}{\textup{2020} Mathematics Subject Classification}
\makeatother

\usepackage{amsmath,amssymb,amsthm,amscd}
\usepackage[T1]{fontenc}
\usepackage{blkarray} 

\usepackage{mathrsfs,euscript} 
\usepackage[cal=boondoxo,scr=euler]{mathalfa}
\usepackage{graphicx} 
\usepackage[colorlinks=true,urlcolor=blue,citecolor=red,linkcolor=blue,hyperfootnotes=true]{hyperref} 

\let\mc\mathcal

\let\eu\EuScript
\let\nc\newcommand

\newtheorem{thm}{Theorem}[section]
\newtheorem{cor}[thm]{Corollary}
\newtheorem{lem}[thm]{Lemma}
\newtheorem{prop}[thm]{Proposition}

\theoremstyle{definition}

\newtheorem{rem}[thm]{Remark}

\newtheorem{example}[thm]{Example}

\numberwithin{equation}{section}

\def\beq{\begin{equation}}
\def\eeq{\end{equation}}
\def\be{\begin{equation*}}
\def\ee{\end{equation*}}
\nc{\bea}{\begin{eqnarray*}}
\nc{\eea}{\end{eqnarray*}}

\let\al\alpha
\let\bt\beta

\let\Dl\Delta
\let\eps\varepsilon

\let\phi\varphi
\let\si\sigma

\let\thi\vartheta

\let\om\omega
\let\Om\Omega

\let\der\partial

\def\N{{\mathbb N}}
\def\C{{\mathbb C}}
\def\Z{{\mathbb Z}}
\def\Q{{\mathbb Q}}

\usepackage[OT2,T1]{fontenc}
\usepackage[all,cmtip]{xy} 
\usepackage{cancel}
\usepackage[vcentermath]{youngtab} 
\usepackage{empheq} 
\usepackage{changepage}
\usepackage{bm} 
\usepackage{chngcntr}
\counterwithin{figure}{section}
\counterwithin{table}{section}

\newlanguage\fakelanguage
\newcommand\cyr{\fontencoding{OT2}\fontfamily{wncyr}\selectfont
   \language\fakelanguage}
\DeclareTextFontCommand{\textcyr}{\cyr}

\usepackage{calligra}

\DeclareMathOperator{\HOM}{\mathscr{H}\text{\kern -3pt {\calligra\large om}}\,}

\makeatletter
\newsavebox{\@brx}
\newcommand{\llangle}[1][]{\savebox{\@brx}{$\m@th{#1\langle}$}%
  \mathopen{\copy\@brx\kern-0.5\wd\@brx\usebox{\@brx}}}
\newcommand{\rrangle}[1][]{\savebox{\@brx}{$\m@th{#1\rangle}$}%
  \mathclose{\copy\@brx\kern-0.5\wd\@brx\usebox{\@brx}}}
\makeatother

\newcommand\xqed[1]{%
  \leavevmode\unskip\penalty9999 \hbox{}\nobreak\hfill
  \quad\hbox{#1}}
\newcommand\qetr{\xqed{$\triangle$}}
\newcommand\qrem{\xqed{$\spadesuit$}}

\let\bi\bibitem

\usepackage{scalerel,stackengine}  
\stackMath
\newcommand\reallywidehat[1]{%
\savestack{\tmpbox}{\stretchto{%
  \scaleto{%
    \scalerel*[\widthof{\ensuremath{#1}}]{\kern.1pt\mathchar"0362\kern.1pt}%
    {\rule{0ex}{\textheight}}
  }{\textheight}%
}{2.4ex}}%
\stackon[-6.9pt]{#1}{\tmpbox}%
}
\usepackage{pifont}

\usepackage{booktabs}
\usepackage{adjustbox}  
\usepackage{lscape}     
\usepackage{longtable}  
\usepackage{bbm}
\usepackage{enumitem}

\begin{document}
\title[On the Betti numbers, Poincar\'e polynomials, and Euler characteristics of $\overline{\eu M}_{0,n}$]{On the Betti numbers, Poincar\'e polynomials, and Euler characteristics of $\overline{\eu M}_{0,n}$}
\author[Giordano Cotti]{Giordano Cotti$\>^{\circ}$}

{\let\thefootnote\relax
\footnotetext{\vskip5pt 
\noindent
$^\circ\>$\textit{ E-mail}:  giordano.cotti@tecnico.ulisboa.pt}}

\maketitle
\begin{center}
\textit{ 
$^\circ\>$Grupo de F\'isica Matem\'atica \\
Departamento de Matem\'atica, Instituto Superior T\'ecnico\\
Av. Rovisco Pais, 1049-001 Lisboa, Portugal\/\\}
\end{center}
\vskip2cm

\begin{abstract}
In this paper, we revisit the Poincar\'e polynomials, Betti numbers, and Euler characteristics of the Deligne--Mumford moduli spaces
\(\overline{\eu M}_{0,n}\)
of stable \(n\)-pointed rational curves. We give elementary derivations of two recent closed formulas for their Poincar\'e polynomials, due respectively to Aluffi--Marcolli--Nascimento and to Eur--Ferroni--Matherne--Pagaria--Vecchi. Our approach shows that both formulas are already implicit in the generating-series results of Getzler and Manin, and can be extracted from them by elementary manipulations of generating functions, the binomial series, and standard identities for Stirling numbers. Beyond these new derivations, the same method also yields new linear recurrence relations for refined invariants associated with these Poincar\'e polynomials, namely distinguished summands and a bivariate refinement. As a further consequence, we obtain two additional formulas for the Betti numbers, not previously recorded in this form.

We also study the Euler characteristics
\(\chi(\overline{\eu M}_{0,n})\).
Using the Taylor expansion of a suitable branch of the Lambert \(W\)-function, we show that their sequence is obtained by evaluating complete Bell polynomials at an explicit auxiliary integer sequence. This Bell-polynomial representation yields Hessenberg determinantal formulas and a new linear recursion, distinct from the well-known quadratic Keel--Manin recursion. It also provides an explicit extraction of the Euler characteristics from the Lambert \(W\)-function expression considered by Aluffi--Marcolli--Nascimento. Finally, we refine the Manin--Zagier asymptotic estimate for these Euler characteristics by computing the full asymptotic expansion.

\end{abstract}

\vskip0,3cm
\begin{adjustwidth}{35pt}{35pt}
{\footnotesize {\it Key words:} Moduli space, stable rational curves, Poincar\'e polynomial, generating function}
\vskip2mm
\noindent
{\footnotesize {\it 2020 Mathematics Subject Classification:} Primary: 14H10, 14D22; Secondary: 05A15, 05A16, 55N35 }
\end{adjustwidth}

\tableofcontents

\section{Introduction}

The Deligne--Mumford moduli spaces $\overline{\eu M}_{0,n}$, with $n \geq 3$, are smooth complex projective varieties of dimension $n-3$. They parametrize isomorphism classes of stable rational curves with $n$ marked points.\footnote{In genus zero, the fine moduli space agrees with its coarse moduli space; throughout the paper we use the notation $\overline{\eu M}_{0,n}$ for this space.} They play a central role in several areas of mathematics, from algebraic geometry to combinatorics, thanks to their concrete modular interpretation and their rich geometry and topology.

The geometry of these spaces admits several explicit descriptions. S.\,Keel~\cite{Kee92} described $\overline{\eu M}_{0,n}$ as an iterated blow-up of a product of projective lines along suitable augmented diagonals, while M.\,Kapranov~\cite{Kap93} gave a complementary blow-up construction. These descriptions place the genus-zero moduli spaces naturally in the framework of wonderful compactifications, a notion introduced by C.\,De Concini and C.\,Procesi~\cite{DCP95} and further developed by L.\,Li~\cite{Li09}.

Beyond their intrinsic geometric interest, the cohomology of \(\overline{\eu M}_{0,n}\) carries an exceptionally rich structure. It governs the geometry of Frobenius manifolds and the genus-zero sector of cohomological field theories, and is closely related to the theory of operads. For this reason, the topology and cohomology of these spaces have been the subject of sustained study over the last decades. See~\cite{Kee92,Man95,Get95,Yuz97,Man99} and the references therein.

Keel's work on the intersection theory of \(\overline{\eu M}_{0,n}\) gave a foundational description of its algebraic and topological invariants. In particular, he proved that the cycle class map identifies the Chow groups with homology groups,
so that the odd homology groups vanish and the integral Chow groups are finitely generated free abelian groups. He also obtained recursive formulas for the Betti numbers
and gave an explicit presentation of the Chow ring by generators and relations. Later, E.\,Getzler~\cite{Get95} and Yu.I.\,Manin~\cite{Man95} independently identified the generating series of the Poincar\'e polynomials
as the solution of an explicit Cauchy problem, or, equivalently, of an explicit functional equation. Despite these recursive and structural descriptions, a closed formula for the Poincar\'e polynomials was obtained only recently by P.\,Aluffi, M.\,Marcolli, and E.\,Nascimento, who discovered a strikingly elegant expression in terms of Stirling numbers of the first and second kinds~\cite[Thm.\,1.1]{AMN25}.

The original proof of the Aluffi--Marcolli--Nascimento formula is very ingenious but technically involved: it occupies several pages and relies on non-trivial combinatorial identities, including generalized forms of Vandermonde convolutions. Soon afterwards, an alternative proof was given by C.\,Eur, L.\,Ferroni, J.P.\,Matherne, R.\,Pagaria, and L.\,Vecchi~\cite{EFMPV25}. Their argument proceeds through a more general result. They first establish an explicit formula for the Hilbert series of the Chow ring of a polymatroid, with respect to an arbitrary building set. Specializing this formula to the braid matroid and the minimal building set, they recover the Poincar\'e polynomials of \(\overline{\eu M}_{0,n}\). In this specialization, the proof ultimately rests on Manin's and Getzler's earlier descriptions and on a remarkable combinatorial identity involving Stirling numbers; see~\cite[Formula~(8)]{EFMPV25}. As a further consequence of their Hilbert-series formula, they also obtain another explicit expression for these Poincar\'e polynomials; see~\cite[Thm.\,1.4]{EFMPV25}.

One of the purposes of this note is to give a short and elementary derivation of both the Aluffi--Marcolli--Nascimento formula \cite[Thm.\,1.1]{AMN25} and the Eur--Ferroni--Matherne--Pagaria--Vecchi formula \cite[Thm.\,1.4]{EFMPV25}. We show that both expressions are already implicit in the work of Getzler and Manin, and that they can be extracted from their results by straightforward manipulations of generating series. 
No combinatorial identities beyond the binomial series and standard identities for Stirling numbers are required. This gives a direct affirmative answer to a question raised by Aluffi--Marcolli--Nascimento \cite[Introduction]{AMN25} about whether their formula can be recovered from expressions following from Getzler's work\footnote{In contrast to \cite{EFMPV25}, our proof is a direct formal consequence of the Getzler--Manin generating series and does not use the polymatroidal machinery of their general theorem.}. Beyond these derivations, the method also reveals additional recursive structures: we obtain new linear recurrence relations for distinguished summands of the Poincar\'e polynomials and for a two-variable refinement. It also yields two explicit formulas for the individual Betti numbers, which appear not to have been recorded in this form.

We also obtain new results for the Euler characteristics of the spaces
\(\overline{\eu M}_{0,n}\).
We first show that the sequence of Euler characteristics admits a complete Bell-polynomial representation: more precisely, it is obtained by evaluating complete Bell polynomials at an explicit auxiliary sequence. Equivalently, this gives Hessenberg\footnote{An {\it upper/lower Hessenberg} matrix has zero entries below/above the first subdiagonal/superdiagonal.} determinantal expressions for
\(\chi(\overline{\eu M}_{0,n})\).
The derivation starts from the Taylor expansion of a suitable branch of the Lambert
\(W\)-function\footnote{In this way, we also answer a second question of Aluffi--Marcolli--Nascimento, who asked how to extract an explicit formula for the Euler characteristics from their Lambert \(W\)-function expression~\cite[Eq.~(8.1)]{AMN25}.}.  As a consequence of the Bell-polynomial representation, we obtain a new linear recursion for these Euler characteristics, distinct from the classical Keel--Manin recursion. Finally, we refine the Manin--Zagier asymptotic estimate~\cite{Man95} for
\(\chi(\overline{\eu M}_{0,n})\)
as \(n\to\infty\) by computing the full asymptotic expansion.

The spaces \(\overline{\eu M}_{0,n}\) form the genus-zero part of the broader system of moduli spaces \(\overline{\eu M}_{g,n}\) of stable pointed curves \cite{DM69,KM76,Knu83a,Knu83b}. In higher genus, the topology of these spaces is considerably more subtle: stable cohomology is governed by the Madsen--Weiss theorem \cite{MW07}, while many computations in the unstable range concern Euler characteristics, beginning with the work of Harer and Zagier \cite{HZ86} and continuing through contributions of Getzler, Loijenga \cite{Get95,GL99,Get02}, Manin \cite{Man95,Man99}, Bini--Gaiffi--Polito \cite{BGP01}, Faber--van der Geer \cite{FvdG04}, Bini--Harer \cite{BH11}, and others. Although our results concern only genus zero, they fit naturally into this general circle of questions.

\medskip

\noindent{\bf Acknowledgements.} The author wishes to thank D.\,Masoero, M.\,Mendes Lopes, L.\,Monsaingeon, J.\,Mourão, and A.\,Varchenko for valuable discussions. Special thanks are due to E.\,Russo and G.\,Ruzza for their careful reading of a preliminary draft. This research was supported by the Fundação para a Ciência e a Tecnologia (FCT)---Portuguese national funding through UID/00208/2025.

\section{Main results}

\noindent{\bf Notations. }
Throughout the paper, we use the convention
$
\mathbb N=\{1,2,3,\dots\}.
$
The exponential partial and complete Bell polynomials are denoted by
$
B_{n,m}$
and
$
B_n,
$
respectively; their definitions and the basic properties used in the paper are recalled in Appendix~\ref{appbell}. We use signed Stirling numbers of the first kind and Stirling numbers of the second kind, denoted by
$
s(n,k)$
and
$
S(n,k),
$
satisfying
$$
x^{\underline n}=x(x-1)\cdots(x-n+1)
=\sum_{k=0}^n s(n,k)x^k,\qquad 
x^n=\sum_{k=0}^n S(n,k)x^{\underline k}.
$$ We use the standard convention that these numbers vanish outside their natural range \(0\leq k\leq n\).
Finally, for
$
n\geq 3,
$
we set
$$
b_k(\overline{\eu M}_{0,n})
:=
\dim_{\mathbb C}H^k(\overline{\eu M}_{0,n},\mathbb C),\qquad 
{\sf P}_{\overline{\eu M}_{0,n}}(t)
:=
\sum_{k\geq 0}b_k(\overline{\eu M}_{0,n})t^k,\qquad
\chi(\overline{\eu M}_{0,n})
:=
{\sf P}_{\overline{\eu M}_{0,n}}(-1).
$$

\noindent{\bf Main results.} We now state the main results of the paper. They are organized around three themes.

The first concerns the Poincar\'e polynomials of
\(\overline{\eu M}_{0,n}\).
We introduce universal polynomials
\(\mathcal P_n\),
defined in terms of exponential partial Bell polynomials, and prove that
\({\sf P}_{\overline{\eu M}_{0,n+1}}(t)\)
is obtained by evaluating
\(\mathcal P_n\)
at an explicit binomial sequence
\(\pi_k(t)\), see Theorem \ref{mainthm1}.
After rewriting, this formula is equivalent to the specialization of the Eur--Ferroni--Matherne--Pagaria--Vecchi formula to
\(\overline{\eu M}_{0,n+1}\), see Corollary \ref{corEFMPV}.
Our proof is instead a direct formal consequence of the Getzler--Manin generating series and does not use the polymatroidal machinery of their general theorem. In addition to giving a new proof of this formula, our method also produces genuinely new linear recurrence relations for refined invariants associated with \({\sf P}_{\overline{\eu M}_{0,n}}(t)\).
These refinements include both distinguished summands of the Poincaré polynomial and a two-variable refined version, see Theorem \ref{THMrecP}. We then derive two explicit formulas for the Betti numbers of $\overline{\eu M}_{0,n}$, see Theorems \ref{mainthm2} and \ref{betti2}.

The second contribution is a direct elementary derivation of the Aluffi--Marcolli--Nascimento formula, see Secction \ref{AMNsec}. The argument again starts from the Getzler--Manin generating series; the closed formula follows from elementary manipulations of exponential generating functions, the binomial series, and the standard Stirling-number expansion of falling factorials. Thus both recent closed formulas for the Poincar\'e polynomials are recovered from the Getzler--Manin framework.

The third theme concerns Euler characteristics. We show that the sequence $(\chi(\overline{\eu M}_{0,n+2}))_{n\geq 1}$ can be recovered from the sequence
\[\frac{0}{2},\frac{1}{3},\frac{2}{4},\frac{3}{5},\frac{4}{6},\frac{5}{7},\frac68,\frac79,\frac{8}{10},\dots
\]
by a two-step transform. 
More precisely, we construct the transformation
$$
\left(\frac{n-1}{n+1}\right)_{n\geq 1}
\longmapsto
\left(\thi_n\right)_{n\geq 1}
\longmapsto
\left(\chi(\overline{\eu M}_{0,n+2})\right)_{n\geq 1},
$$
where 
the intermediate sequence \((\thi_n)_{n\geq 1}\) is obtained by evaluating the universal polynomials \(\mathcal P_n\),
while the second arrow represent the complete Bell-polynomials transform, see Theorem \ref{mainthm}. This automatically gives Hessenberg determinantal formulas for the Euler characteristics, as well as a new linear recursion distinct from the quadratic Keel--Manin recursion, \ref{corlinearrechi}. Finally, using the Lambert
$W$-function, we refine the Manin--Zagier asymptotic estimate by explicitly giving the full asymptotic expansion, see Theorem \ref{thmMZ+}.

\subsection{The $\mc P$-polynomials} Let $\bm x=(x_1,x_2,\dots)$ be indeterminates. For any natural number $n\geq 2$ define the polynomial $\mc P_n\in\Q[x_1,\dots,x_{n-1}]$ by
\[\mc P_n(\bm x):=\sum_{m=1}^{n-1}\frac{(n+m-1)!}{(n-1)!}B_{n-1,m}(x_1,x_2,\dots).
\]

\begin{example}The first $\mc P$-polynomials are
\[
\begin{aligned}
\mathcal P_2(\bm x)&=2x_1,\\
\mathcal P_3(\bm x)&=12x_1^2+3x_2,\\
\mathcal P_4(\bm x)&=120x_1^3+60x_1x_2+4x_3,\\
\mathcal P_5(\bm x)&=
1680x_1^4
+1260x_1^2x_2
+120x_1x_3
+90x_2^2
+5x_4.
\end{aligned}
\]\qetr
\end{example}

\begin{prop}
For every integer $n \ge 2$, $\mc P_n(\bm x)$ is a polynomial in $\Z[x_1,\dots,x_{n-1}]$, and all its coefficients are positive multiples of $n$.
\end{prop}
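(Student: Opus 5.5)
The plan is to expand $B_{n-1,m}$ explicitly and read off each coefficient. Recall the standard formula
\[
B_{n-1,m}(x_1,x_2,\dots)=\sum \frac{(n-1)!}{j_1!j_2!\cdots\, (1!)^{j_1}(2!)^{j_2}\cdots}\,x_1^{j_1}x_2^{j_2}\cdots,
\]
where the sum runs over tuples $(j_1,j_2,\dots)$ of nonnegative integers with $\sum_i j_i=m$ and $\sum_i i\,j_i=n-1$ (see Appendix~\ref{appbell}). Distinct tuples give distinct monomials. Moreover, the index $m=\sum j_i$ is determined by the monomial, so each monomial occurs in exactly one summand of $\mathcal P_n$. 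Hence the coefficient of $x_1^{j_1}x_2^{j_2}\cdots$ in $\mathcal P_n$ is
\[
c_{\bm j}=\frac{(n+m-1)!}{j_1!j_2!\cdots\,(1!)^{j_1}(2!)^{j_2}\cdots},\qquad m=\sum_i j_i .
\]
This is manifestly a positive rational number. It remains to show that it is an integer divisible by $n$.

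For integrality, I would interpret $c_{\bm j}$ combinatorially. The quantity $\frac{(m)!\,(n-1)!}{\prod j_i!\prod (i!)^{j_i}}\cdot\frac{1}{m!}$ counts set partitions, so a multinomial reading seems more convenient. Write
\[
c_{\bm j}=\frac{(n+m-1)!}{(n-1)!\,m!}\cdot\frac{m!}{\prod_i j_i!}\cdot\frac{(n-1)!}{\prod_i (i!)^{j_i}} .
\]
The last factor is the multinomial coefficient $\binom{n-1}{1,\dots,1,2,\dots,2,\dots}$, since the block sizes sum to $n-1$. The middle factor is the multinomial $\binom{m}{j_1,j_2,\dots}$. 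The first factor is $\binom{n+m-1}{m}$. All three are integers, so $c_{\bm j}\in\Z$.

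For divisibility by $n$, I would isolate a factor of $n$ from the first factor: $\binom{n+m-1}{m}=\frac{n}{m}\binom{n+m-1}{m-1}$. This only shows that $m\,c_{\bm j}$ is divisible by $n$, so I need a sharper grouping. I expect this to be the main obstacle.

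The better route is to combine the first two factors as
\[
\frac{(n+m-1)!}{(n-1)!\,\prod_i j_i!}=\binom{n+m-1}{\,n-1,\,j_1,\,j_2,\dots}.
\]
Then I would use the identity $\frac{(n+m-1)!}{(n-1)!}=n\cdot\frac{(n+m-1)!}{n!}$ to write
\[
c_{\bm j}=n\cdot\frac{(n+m-1)!}{n!\,\prod_i j_i!}\cdot\frac{(n-1)!}{\prod_i(i!)^{j_i}}\cdot\frac{1}{(n-1)!}\cdots
\]
This bookkeeping does not close directly. The cleaner statement is $c_{\bm j}=\frac{n}{m}\binom{n+m-1}{m-1}\binom{m}{\bm j}\binom{n-1}{\text{blocks}}$, and the plan is to absorb the $1/m$ into $\binom{m}{\bm j}$. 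Choose any $i$ with $j_i\ge1$. Then $\binom{m}{\bm j}=\frac{m}{j_i}\binom{m-1}{\dots,j_i-1,\dots}$, which gives
\[
c_{\bm j}=n\cdot\frac{1}{j_i}\binom{n+m-1}{m-1}\binom{m-1}{\dots,j_i-1,\dots}\binom{n-1}{\text{blocks}} .
\]
The leftover $1/j_i$ is still a problem. To handle it, I would instead check $c_{\bm j}/n\in\Z$ via $p$-adic valuations, or more directly by a counting argument. The quantity $c_{\bm j}/n$ should count the ways to choose a set partition of $[n-1]$ of type $\bm j$ together with an arrangement of its $m$ blocks interleaved with $n-1$ further symbols (namely $\frac{(n+m-1)!}{n!}\cdot\frac{n!}{\dots}$). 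The concrete target is to show that
\[
\frac{c_{\bm j}}{n}=\frac{(n+m-1)!}{n!}\cdot\frac{(n-1)!}{\prod j_i!\prod(i!)^{j_i}}
\]
is an integer. Here $\frac{(n-1)!}{\prod j_i!\prod (i!)^{j_i}}$ is the number of set partitions of $[n-1]$ of type $\bm j$, which is an integer. Also $\frac{(n+m-1)!}{n!}=(n+1)(n+2)\cdots(n+m-1)$ is an integer, since $m\ge1$. Their product is therefore an integer, which completes the argument.
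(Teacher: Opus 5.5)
Your final argument is correct and is essentially the paper's proof. The coefficient of each monomial in $B_{n-1,m}$ is the number of set partitions of $[n-1]$ of the given type, hence a positive integer, and $\frac{(n+m-1)!}{(n-1)!}=n(n+1)\cdots(n+m-1)$ is a positive multiple of $n$; the intermediate attempts with $\binom{n+m-1}{m}$ and the leftover factors $1/m$, $1/j_i$ are unnecessary and can be dropped.
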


\proof
Incomplete Bell polynomials have positive integer coefficients. Each ratio $\frac{(n+m-1)!}{(n-1)!}$, with $m=1,\dots,n-1$ and $n\geq 2$, is a positive integer divisible by $n$. The claim follows.
\endproof

Let us decompose each polynomial $\mc P_n(\bm x)$ into homogeneous parts, by setting
\[
\mc P_n(\bm x)
=
\sum_{m=1}^{n-1}\mc P_{n,m}(\bm x),
\qquad
\mc P_{n,m}(\bm x)
:=
\frac{(n+m-1)!}{(n-1)!}
B_{n-1,m}(x_1,x_2,\dots),\qquad 
0< m<n.
\]
The polynomial $\mc P_{n,m}(\bm x)$ is homogeneous of degree $m$ in the variables $\bm x$.

\begin{prop}
The polynomials $\mc P_{n,m}$ satisfy the recurrence
\beq\label{recP1}
\mc P_{n+1,m+1}
=
\sum_{j=1}^{n-m}
\frac{(n+m+1)!}
{n\,(j-1)!\,(n-j+m)!}
\,x_j\,
\mc P_{n-j+1,m}.
\eeq
If we set $\mc P_1:=1$
and
$
E:=\sum_{r\geq 1}x_r\frac{\partial}{\partial x_r}
$
is the Euler operator, then the complete polynomials satisfy
\beq\label{recP2}
\mc P_{n+1}
=
\sum_{j=1}^{n}
\frac{x_j}{n\,(j-1)!}
\left[
\prod_{a=0}^{j}
\left(E+n-j+1+a\right)
\right]
\mc P_{n-j+1}.
\eeq
\end{prop}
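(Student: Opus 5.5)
The plan is to reduce both identities to the standard "first-variable" recurrence for exponential partial Bell polynomials recalled in Appendix~\ref{appbell}:
\[
B_{n,m+1}(x_1,x_2,\dots)=\sum_{i=1}^{n-m}\binom{n-1}{i-1}\,x_i\,B_{n-i,m}(x_1,x_2,\dots),\qquad n\geq 1,\ m\geq 0 .
\]
This follows by isolating the block containing the element $1$ in a set partition of $\{1,\dots,n\}$ into $m+1$ blocks. If that block has size $i$, there are $\binom{n-1}{i-1}$ choices for it, and the remaining $n-i$ elements are partitioned into $m$ blocks. Equivalently, one can differentiate $\frac{1}{(m+1)!}\big(\sum_k x_k t^k/k!\big)^{m+1}$ in $t$.

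\textbf{Step 1: proof of \eqref{recP1}.} By definition, $\mc P_{n+1,m+1}=\frac{(n+m+1)!}{n!}B_{n,m+1}$. I apply the recurrence above and then rewrite each $B_{n-j,m}$ through the defining relation
\[
B_{n-j,m}=\frac{(n-j)!}{(n-j+m)!}\,\mc P_{n-j+1,m}.
\]
The resulting coefficient of $x_j\mc P_{n-j+1,m}$ is
\[
\frac{(n+m+1)!}{n!}\cdot\frac{(n-1)!}{(j-1)!\,(n-j)!}\cdot\frac{(n-j)!}{(n-j+m)!}=\frac{(n+m+1)!}{n\,(j-1)!\,(n-j+m)!},
\]
which is exactly \eqref{recP1}.

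Some care is needed with the ranges. For $m\geq1$, the index $j$ runs over $1\le j\le n-m$, so that $n-j+1\ge m+1$ and $\mc P_{n-j+1,m}$ is within its natural range $0<m<n-j+1$. I will also record the degenerate case $m=0$ using the convention $\mc P_{1,0}=\mc P_1=1$ and $\mc P_{k,0}=0$ for $k\ge2$. In that case $B_{n,1}=x_n$ gives $\mc P_{n+1,1}=(n+1)x_n$, and this agrees with the $j=n$ term of the same formula.

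\textbf{Step 2: proof of \eqref{recP2}.} I sum \eqref{recP1} over $m$ from $0$ to $n-1$, including the $m=0$ term just discussed. After exchanging the sums, the inner coefficient factors as a product of $j+1$ consecutive integers:
\[
\frac{(n+m+1)!}{(n-j+m)!}=\prod_{a=0}^{j}(n-j+1+m+a).
\]
Since $\mc P_{n-j+1,m}$ is homogeneous of degree $m$, the Euler operator satisfies $E\,\mc P_{n-j+1,m}=m\,\mc P_{n-j+1,m}$. Hence this scalar factor equals $\prod_{a=0}^{j}(E+n-j+1+a)$ applied to $\mc P_{n-j+1,m}$. Summing over $m$ for fixed $j$ then reassembles $\mc P_{n-j+1}$, and this yields \eqref{recP2}.

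\textbf{Main obstacle.} The only delicate point is the bookkeeping at the boundary. One must check that the term $j=n$ with $\mc P_1=1$ (degree $0$) correctly produces $\mc P_{n+1,1}=(n+1)x_n$: the coefficient is $\frac{x_n}{n\,(n-1)!}\prod_{a=0}^{n}(1+a)=(n+1)x_n$, as required. One must also check that no spurious terms with $m\ge n-j+1$ arise; these vanish because the corresponding Bell polynomials are zero by convention.
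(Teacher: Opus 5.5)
Your proposal is correct and follows essentially the same route as the paper. You use the first-block recurrence for $B_{n,m+1}$, rescale by $(n+m+1)!/n!$ to obtain \eqref{recP1}, and then use $E\,\mc P_{r,m}=m\,\mc P_{r,m}$ and sum over $m$ to obtain \eqref{recP2}. Your explicit check of the $m=0$ boundary term, with $\mc P_{1,0}=\mc P_1=1$ giving $\mc P_{n+1,1}=(n+1)x_n$, spells out a point the paper leaves implicit in its phrase ``all admissible $m$''.
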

\proof
Using the standard recurrence for the partial exponential Bell polynomials,
$$
B_{n,m+1}
=
\sum_{i=0}^{n-m-1}
\binom{n-1}{i}
x_{i+1}
B_{n-i-1,m},
$$
and multiplying by
$
\frac{(n+m+1)!}{n!},
$
we obtain
$$
\mc P_{n+1,m+1}
=
\sum_{i=0}^{n-m-1}
\frac{(n+m+1)!}{n!}
\binom{n-1}{i}
x_{i+1}
B_{n-i-1,m}
=
\sum_{i=0}^{n-m-1}\frac{(n+m+1)!}{n\,i!\,(n-i+m-1)!}.
x_{i+1}
\mc P_{n-i,m}.
$$This proves \eqref{recP1}, by setting $j=i+1$. From the identity $E\mc P_{r,m}=m\mc P_{r,m}$, we deduce
$$
\prod_{a=0}^{j}
\left(E+n-j+1+a\right)\mc P_{r,m}
=
\prod_{a=0}^{j}
\left(m+n-j+1+a\right)\mc P_{r,m}
=
\frac{(n+m+1)!}{(n-j+m)!}\mc P_{r,m}.
$$
Therefore
$$
\frac{(n+m+1)!}
{n\,(j-1)!\,(n-j+m)!}
\mc P_{n-j+1,m}
=
\frac{1}{n\,(j-1)!}
\left[
\prod_{a=0}^{j}
\left(E+n-j+1+a\right)
\right]
\mc P_{n-j+1,m}.
$$
Summing this identity over all admissible $m$ and $j$
gives \eqref{recP2}.
\endproof

\subsection{The $\pi$-sequence and Poincar\'e polynomials} Consider the sequence $\pi\in\Q[t]^{\N}$ defined by
\beq
\label{pik}\pi_k(t)=\frac{(k-1)!}{k+1}\binom{t^2-2}{k-1}=
\frac{(t^2-2)(t^2-3)\cdots(t^2-k)}{k+1},\quad k\geq 1,
\eeq with the empty product convention for $k=1.$

\begin{thm}\label{mainthm1}
For all $n\geq 2$, we have
\[{\sf P}_{\overline{\eu M}_{0,n+1}}(t)=\mc P_n(\pi_1(t),\dots,\pi_{n-1}(t)).
\]
\end{thm}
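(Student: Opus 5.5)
The plan is to recognize the right-hand side as the output of Lagrange inversion, and then check that the function being inverted is exactly the one in the Getzler--Manin functional equation.

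\textbf{Step 1: the Lagrange-inversion identity.} Put $h(u):=\sum_{k\geq1}x_k\,u^k/k!$ and $\psi(u):=u\,(1-h(u))$. Let $\phi(z)=\psi^{-1}(z)$ be its compositional inverse, $\phi(z)=z+O(z^2)$.

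Lagrange inversion gives, for $n\geq 2$,
\[
n!\,[z^n]\,\phi=(n-1)!\,[u^{n-1}]\,(1-h(u))^{-n}.
\]
Expand with the binomial series:
\[
(1-h)^{-n}=\sum_{m\geq0}\frac{n(n+1)\cdots(n+m-1)}{m!}\,h^m .
\]
Use the defining property of partial Bell polynomials, $h^m/m!=\sum_{N}B_{N,m}(\bm x)\,u^N/N!$, and note that $n(n+1)\cdots(n+m-1)=(n+m-1)!/(n-1)!$. This yields
\[
n!\,[z^n]\,\phi=\mc P_n(\bm x),
\]
i.e.
\[
\phi(z)=z+\sum_{n\ge2}\mc P_n(\bm x)\,\frac{z^n}{n!}.
\]
The $m=0$ term contributes nothing because $B_{n-1,0}=0$ for $n\geq2$. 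So it remains to show that, after specializing $x_k=\pi_k(t)$, this $\phi$ is the generating series $z+\sum_{n\geq2}{\sf P}_{\overline{\eu M}_{0,n+1}}(t)\,z^n/n!$.

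\textbf{Step 2: compute $\psi$ at $x_k=\pi_k(t)$.} Set $a=t^2-2$. Then
\[
u\,h(u)=\sum_{k\geq1}\binom{a}{k-1}\frac{u^{k+1}}{k(k+1)}.
\]
This is the unique double antiderivative of $(1+u)^a$ vanishing to second order at $u=0$, namely
\[
u\,h(u)=\frac{(1+u)^{t^2}-1-t^2u}{t^2(t^2-1)}.
\]
Hence
\[
\psi(u)=u-\frac{(1+u)^{t^2}-1-t^2u}{t^2(t^2-1)}.
\]
Clearing denominators, the statement $z=\psi(\phi(z))$ becomes
\[
t^2(t^2-1)\,z=t^2(t^2-1)\,\phi-(1+\phi)^{t^2}+1+t^2\phi .
\]
This identity holds as formal power series in $z$ with coefficients polynomial in $t$, since all the $\pi_k$ are polynomials.

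\textbf{Step 3: match with Getzler--Manin.} Manin \cite{Man95} and Getzler \cite{Get95} showed that the generating function of the Poincar\'e polynomials (indexed with the shift $n+1$ and $z^n/n!$) is the unique solution $\phi=z+O(z^2)$ of a functional equation of exactly this form, $(1+\phi)^{t^2}=t^4z-t^2(t^2-1)\phi+\dots$. I would quote it in Manin's normalization and verify, after rearrangement, that it coincides with the equation in Step 2. Uniqueness of the power-series solution with $\phi=z+O(z^2)$, which is the same as uniqueness of the compositional inverse, then gives the equality coefficientwise.

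As a sanity check, I would compare low orders: $\mc P_2(\pi)=2\pi_1=1={\sf P}_{\overline{\eu M}_{0,3}}$, and $\mc P_3(\pi)=12\pi_1^2+3\pi_2=3+t^2-2=1+t^2={\sf P}_{\overline{\eu M}_{0,4}}$.

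\textbf{Main obstacle.} Step 1 and the integration in Step 2 are formal. The real difficulty is Step 3: pinning down Getzler's and Manin's normalization (variable $t$ versus $q=t^2$, indexing shift, $z^n/n!$ versus $z^{n-1}/(n-1)!$, and any overall factors) so that their functional equation literally equals $z=\psi(\phi)$ above. I would first try to match Manin's form of the equation directly, and fall back on comparing the first few coefficients to fix the constants.
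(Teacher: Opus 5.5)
Your proposal is correct and is essentially the paper's proof. The paper likewise applies Lagrange inversion to Getzler's inverse series $G(x,t)=\frac{1+t^4x-(1+x)^{t^2}}{t^4-t^2}$, checks via the binomial series that $G/x=1-\sum_{k\ge1}\pi_k(t)x^k/k!$, and expands $(G/x)^{-n}$ using the generalized binomial series and Bell polynomials; the only difference is that you do the inversion with generic $\bm x$ before specializing. The obstacle you flag in Step 3 does not arise: your $\psi$ is exactly $G$, so your Step 2 identity is literally the Getzler--Manin equation $(1+\phi)^{t^2}=t^4\phi-t^2(t^2-1)z+1$ (the form you recalled has $z$ and $\phi$ interchanged), and no constants need to be fixed by a low-order comparison.
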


Theorem \ref{mainthm1} will be proved in Section \ref{secproof1}.

For a finite set
$I,$
we write
$
\lambda\vdash I
$
for a set partition of
$
I,
$
and
$
\ell(\lambda)
$
for its number of blocks. We also set $[n-1]=\{1,\dots,n-1\}$.
\begin{cor}\cite[Thm.\,1.4]{EFMPV25}\label{corEFMPV}
We have
$$
{\sf P}_{\overline{\eu M}_{0,n+1}}(t)
=
\sum_{\lambda\vdash [n-1]}
\frac{(n-1+\ell(\lambda))!}{(n-1)!}
\prod_{B\in\lambda}
\frac{\prod_{j=2}^{|B|}(t^2-j)}{|B|+1},
$$
where the product
is understood to be empty when
$
|B|=1.
$
\end{cor}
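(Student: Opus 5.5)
This follows from Theorem~\ref{mainthm1} once the partial Bell polynomials are expanded combinatorially. Recall the standard set-partition expansion of the exponential partial Bell polynomials (Appendix~\ref{appbell}):
$$
B_{N,m}(x_1,x_2,\dots)=\sum_{\substack{\lambda\vdash[N]\\ \ell(\lambda)=m}}\ \prod_{B\in\lambda}x_{|B|}.
$$
It is equivalent to the multinomial formula $B_{N,m}=\sum \frac{N!}{\prod_j j_i!\,(i!)^{j_i}}\prod x_i^{j_i}$, because the coefficient counts the set partitions of $[N]$ with $j_i$ blocks of size $i$.

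Take $N=n-1$ and insert this expansion into the definition of $\mc P_n$. Grouping the terms by the number of blocks $m=\ell(\lambda)$, which runs over $1,\dots,n-1$ (all partitions of a nonempty set occur), gives
$$
\mc P_n(\bm x)=\sum_{\lambda\vdash[n-1]}\frac{(n-1+\ell(\lambda))!}{(n-1)!}\prod_{B\in\lambda}x_{|B|}.
$$

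Now specialize $x_k=\pi_k(t)$ and apply Theorem~\ref{mainthm1}. By \eqref{pik}, $\pi_k(t)=\frac{1}{k+1}\prod_{j=2}^{k}(t^2-j)$, with the empty product when $k=1$, so $\pi_1=\tfrac12$. Setting $k=|B|$, each factor $x_{|B|}$ becomes $\frac{\prod_{j=2}^{|B|}(t^2-j)}{|B|+1}$, which is exactly the stated formula.

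No real obstacle arises. The only points needing care are that the combinatorial expansion of $B_{n-1,m}$ matches the normalization used in the paper, and that the index range $m=1,\dots,n-1$ coincides with the range of $\ell(\lambda)$ for $\lambda\vdash[n-1]$ with $n\ge 2$.
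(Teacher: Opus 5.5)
Your proposal is correct and follows essentially the same route as the paper: expand $B_{n-1,m}$ over set partitions of $[n-1]$ with $m$ blocks, identify $(n+m-1)!/(n-1)!$ with $(n-1+\ell(\lambda))!/(n-1)!$, and specialize $x_k=\pi_k(t)$ via Theorem~\ref{mainthm1}. Your remark on the range $1\le \ell(\lambda)\le n-1$ is a small detail the paper leaves implicit. The only blemish is the index typo $\prod_j j_i!$, which should read $\prod_i j_i!$, in your multinomial formula.
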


\begin{proof}
By the set-partition expansion of the partial Bell polynomials (formula \eqref{eqbellpart}),
\[
B_{N,m}(y_1,y_2,\dots)
=
\sum_{\substack{\lambda\vdash [N]\\ \ell(\lambda)=m}}
\prod_{B\in\lambda} y_{|B|}.
\]
Applying this identity with \(N=n-1\) and \(y_k=\pi_k(t)\), Theorem~\ref{mainthm1} gives
\[
{\sf P}_{\overline{\eu M}_{0,n+1}}(t)
=
\sum_{\lambda\vdash [n-1]}
\frac{(n-1+\ell(\lambda))!}{(n-1)!}
\prod_{B\in\lambda}\pi_{|B|}(t).
\]
The formula follows from the definition of \(\pi_k(t)\).
\end{proof}

\begin{example}
Throughout the example, we write $\pi_i$ for $\pi_i(t)$.
For instance, we have
\begin{align*}
{\sf P}_{\overline{\eu M}_{0,3}}(t)
  &= 2 \pi_1
   = 1,\\
{\sf P}_{\overline{\eu M}_{0,4}}(t)
  &= 12 \pi_1^2 + 3 \pi_2
   = 1+t^2,\\
{\sf P}_{\overline{\eu M}_{0,5}}(t)
  &= 120 \pi_1^3 + 60 \pi_2 \pi_1 + 4 \pi_3
   = t^4+5t^2+1,
\end{align*}
and also
\begin{align*}
{\sf P}_{\overline{\eu M}_{0,6}}(t)
  ={}& 1680 \pi_1^4
      +1260 \pi_2 \pi_1^2
      +120 \pi_3 \pi_1
      +90 \pi_2^2
      +5 \pi_4 \\
  ={}& 1 + 16t^2 + 16t^4 + t^6,
\end{align*}
while
\begin{align*}
{\sf P}_{\overline{\eu M}_{0,11}}(t)
={}&
17643225600 \pi_1^9
+35286451200 \pi_2 \pi_1^7
+4843238400 \pi_3 \pi_1^6
+21794572800 \pi_2^2 \pi_1^5\\
&+454053600 \pi_4 \pi_1^5
+4540536000 \pi_2 \pi_3 \pi_1^4
+30270240 \pi_5 \pi_1^4
+4540536000 \pi_2^3 \pi_1^3\\
&+201801600 \pi_3^2 \pi_1^3
+302702400 \pi_2 \pi_4 \pi_1^3
+1441440 \pi_6 \pi_1^3
+908107200 \pi_2^2 \pi_3 \pi_1^2\\
&+21621600 \pi_3 \pi_4 \pi_1^2
+12972960 \pi_2 \pi_5 \pi_1^2
+47520 \pi_7 \pi_1^2
+227026800 \pi_2^4 \pi_1\\
&+43243200 \pi_2 \pi_3^2 \pi_1
+415800 \pi_4^2 \pi_1
+32432400 \pi_2^2 \pi_4 \pi_1
+665280 \pi_3 \pi_5 \pi_1\\
&+332640 \pi_2 \pi_6 \pi_1
+990 \pi_8 \pi_1
+369600 \pi_3^3
+21621600 \pi_2^3 \pi_3\\
&+1663200 \pi_2 \pi_3 \pi_4
+498960 \pi_2^2 \pi_5
+13860 \pi_4 \pi_5
+9240 \pi_3 \pi_6\\
&+3960 \pi_2 \pi_7
+10 \pi_9\\
={}&
t^{16}+968t^{14}+49556t^{12}+429594t^{10}+861235t^8
+429594t^6+49556t^4+968t^2+1.\tag*{\qetr}
\end{align*}
\end{example}

We next recall the closed formula of Aluffi--Marcolli--Nascimento for the Poincar\'e polynomials ${\sf P}_{\overline{\eu M}_{0,n+1}}(t)$.

\begin{thm}\label{AMNthm}\cite[Thm.\,1.1]{AMN25} We have
\begin{multline*}{\sf P}_{\overline{\eu M}_{0,n+1}}(t)=(1-t^2)^n\sum_{a\ge0} \sum_{b\ge 0}
s(n+a,n+a-b)\, S(n+a-b,a+1)\,
t^{2(a+b)}\\
=(1-t^2)^n(-1)^n
\sum_{k\ge0} \sum_{r=0}^{n+k-1}
s(n+k-1,r)\, S(r,k)\,
t^{2(r-n-2k)},
\end{multline*}where the second equality follows by Poincar\'e duality ${\sf P}_{\overline{\eu M}_{0,n+1}}(t)=t^{2\dim_\C\overline{\eu M}_{0,n+1}}{\sf P}_{\overline{\eu M}_{0,n+1}}(t^{-1})$.
\end{thm}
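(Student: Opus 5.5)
The plan is to derive the formula directly from the Getzler--Manin generating series, in the same spirit as the proof of Theorem~\ref{mainthm1}. Put $q=t^2$ and
\[
f(x)=x+\sum_{n\ge2}{\sf P}_{\overline{\eu M}_{0,n+1}}(t)\,\frac{x^n}{n!}.
\]
Getzler and Manin characterise $f$ by a functional equation. In it, the compositional inverse $x=x(f)$ is an explicit elementary expression in $f$, built from $(1+f)^q$, $f$ and constants depending on $q$. In the first step I will rewrite that equation as
\[
\frac{f}{x(f)}=\frac{c(q)}{1-g(f)},
\]
where $c(q)$ is a simple rational factor in $q$ and $g(f)$ is a combination of $(1+f)^q-1$ and $f$. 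I expect $c(q)$ to be the source of the prefactor $(1-t^2)^n$. Fixing the exact normalisation of the Getzler--Manin equation is routine but must be done carefully. As a check, the first coefficients of the inverse must be $x=f-\tfrac12 f^2+\cdots$, since ${\sf P}_{\overline{\eu M}_{0,3}}=1$.

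Next I apply Lagrange inversion:
\[
\frac{{\sf P}_{\overline{\eu M}_{0,n+1}}(t)}{n!}=[x^n]f=\frac1n\,[f^{n-1}]\Bigl(\frac{f}{x(f)}\Bigr)^n .
\]
The $n$-th power is then expanded with the negative binomial series,
\[
(1-g)^{-n}=\sum_{a\ge0}\binom{n+a-1}{a}g^a,
\]
which produces the outer sum over $a$ (resp.\ $k$). The power $c(q)^n$ gives $(1-q)^n$, up to the sign $(-1)^n$ and powers of $q$, depending on which of the two stated forms one aims at.

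The Stirling numbers come from the following identity. Put $u=\log(1+f)$. Then
\[
\frac{\bigl((1+f)^q-1\bigr)^k}{k!}=\frac{(e^{qu}-1)^k}{k!}=\sum_r S(r,k)\,q^r\frac{u^r}{r!},
\qquad
\frac{u^r}{r!}=\sum_m s(m,r)\frac{f^m}{m!}.
\]
Hence the coefficient of $f^m$ in $\bigl((1+f)^q-1\bigr)^k$ is $\frac{k!}{m!}\sum_r s(m,r)S(r,k)q^r$, which is exactly the double sum pattern $\sum_r s(n+k-1,r)S(r,k)t^{2r}$ of the second expression. If $g$ also carries a linear term in $f$, I will absorb it by a second binomial expansion, or by shifting $f^{n-1}$ to $f^{n+k-1}$ through a factor $f^{-k}$. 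That shift is what should produce the exponent $t^{2(r-n-2k)}$ together with the upper index $n+k-1$.

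Finally, the factorials $\binom{n+a-1}{a}\cdot\frac{k!}{m!}\cdot\frac{n!}{n}$ must collapse to $1$. This cancellation is the step I expect to be the main obstacle. One has to pick the right splitting of $g$, so that the convolution collapses directly to a single term $s(n+k-1,r)S(r,k)$, instead of leaving an extra binomial convolution that would need a Vandermonde-type identity. If a leftover sum does appear, my first attempt will be to eliminate it with the recurrences $s(m+1,r)=s(m,r-1)-m\,s(m,r)$ and $S(r+1,k)=kS(r,k)+S(r,k-1)$.

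Once the second form is obtained, the first form follows, as the statement notes, from Poincar\'e duality ${\sf P}(t)=t^{2(n-2)}{\sf P}(t^{-1})$ and the reindexing $a=k-1$, $b=n+k-1-r$. I will check this directly by comparing the results for $n=2,3$ against the examples ${\sf P}_{\overline{\eu M}_{0,4}}(t)=1+t^2$ and ${\sf P}_{\overline{\eu M}_{0,5}}(t)=1+5t^2+t^4$.
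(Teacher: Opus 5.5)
Your route is correct and genuinely different from the paper's, and once you fix the splitting it is considerably shorter. Take $c(q)=(q-1)/q$ and $g(f)=\bigl((1+f)^q-1\bigr)/(q^2f)$, so that $x(f)/f=\frac{q}{q-1}\bigl(1-g(f)\bigr)$. Then
\[
{\sf P}_{\overline{\eu M}_{0,n+1}}(t)=(n-1)!\Bigl(\frac{q-1}{q}\Bigr)^n\sum_{k\ge0}\binom{n+k-1}{k}q^{-2k}\,[f^{n+k-1}]\bigl((1+f)^q-1\bigr)^k ,
\]
and your Stirling identity with $m=n+k-1$ gives the second form at once, because $(n-1)!\binom{n+k-1}{k}\frac{k!}{(n+k-1)!}=1$. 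So the obstacle you anticipated does not arise, and no Vandermonde-type identity is needed. What you must add is the ring in which the negative binomial series lives. Since $g(0)=1/q\neq0$, the sum $\sum_k\binom{n+k-1}{k}g^k$ does not converge in $\mathbb{Q}[q][\![f]\!]$. It does converge coefficientwise in $\mathbb{Q}((q^{-1}))[\![f]\!]$, and equals $(1-g)^{-n}$ there: $[f^j]g=\binom{q}{j+1}/q^2$ has degree $j-1$ in $q$, so $[f^j]g^k$ has degree at most $j-k$. This is why the result comes out as a series in $t^{-2}$, and why duality is needed for the first form; in that step the $k=0$ term vanishes because $s(n-1,0)=0$. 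The paper instead multiplies by $(1-w)^{-n}$ and expands $(1-w+h)^{-n}$, with $h=\sum_{k\ge2}\binom{w}{k}x^{k-1}/w$, around $w=x=0$ by negative binomial, binomial and trinomial expansions. It then converts $\binom{wa}{N}$ into Stirling numbers of the first kind, collapses the alternating sum over $a$ with the explicit formula for $S(r,k)$, and finally resums over $j$ via $\sum_t\frac{(A+t)!}{t!}x^t=A!(1-x)^{-A-1}$ at $x=1+w$. That last step is where the expansion point silently moves from $w=0$ to $w=\infty$, through a series that does not converge in $\mathbb{Q}[\![w]\!]$. Your version makes this change at the outset, where the degree bound justifies it. It also produces both Stirling numbers in one stroke from $(e^{qu}-1)^k/k!$ with $u=\log(1+f)$. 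The paper's route gains only that it starts within ordinary power series in $(w,x)$, and pays for this with heavier bookkeeping and a purely formal final resummation.
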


Our contribution here is the elementary proof given in Section~\ref{AMNsec}.

\subsection{An auxiliary recurrence for refined Poincar\'e polynomials} The Poincar\'e polynomials ${\sf P}_{\overline{\eu M}_{0,n}}(t)$ satisfy a well-known quadratic recurrence\footnote{Here we use the convention
$
{\sf P}_{\overline{\eu M}_{0,2}}(t):=1,
$
so that the recurrence is valid starting from $n=1$. Equivalently,
one may avoid this unstable convention by writing, for $n\geq 2$,
$$
{\sf P}_{\overline{\eu M}_{0,n+2}}(t)
=
(1+t^2)
{\sf P}_{\overline{\eu M}_{0,n+1}}(t)
+
t^2
\sum_{\substack{i+j=n+1\\ i,j\geq 2}}
\binom{n}{i}
{\sf P}_{\overline{\eu M}_{0,i+1}}(t)\,
{\sf P}_{\overline{\eu M}_{0,j+1}}(t),
$$
with initial value
$
{\sf P}_{\overline{\eu M}_{0,3}}(t)=1.
$}, due to S.\,Keel and Yu.I.\,Manin \cite{Kee92,Man95}:
\[{\sf P}_{\overline{\eu M}_{0,n+2}}(t)={\sf P}_{\overline{\eu M}_{0,n+1}}(t)+t^2\sum_{\substack{i+j=n+1\\ i\geq 2}}\binom{n}{i}{\sf P}_{\overline{\eu M}_{0,i+1}}(t)\,{\sf P}_{\overline{\eu M}_{0,j+1}}(t), \quad n\geq 1.
\]In this section we derive alternative {\it linear} recurrence relations for certain refinements of the Poincaré polynomial ${\sf P}_{\overline{\eu M}_{0,n}}(t)$. More precisely, these recurrences concern both distinguished summands of the polynomial and a bivariate refinement.
Namely, for $0<m<n$, set
\[{\sf P}_{\overline{\eu M}_{0,n+1}}(t)=\sum_{m=1}^{n-1}{\sf P}_{n,m}(t),\quad {\sf P}_{n,m}(t):=\mc P_{n,m}(\pi_1(t),\pi_2(t),\dots),
\]and, for $n\geq 2$, introduce also the refined polynomials
\[{\sf P}_n(s,t)=\mc P_n(s\,\pi_1(t),s\,\pi_2(t),\dots)=
\sum_{m=1}^{n-1}s^m {\sf P}_{n,m}(t),\qquad \text{so that }{\sf P}_n(1,t)={\sf P}_{\overline{\eu M}_{0,n+1}}(t).
\]
\begin{thm}\label{THMrecP}
The polynomials ${\sf P}_{n,m}$ satisfy the closed recurrence
$$
{\sf P}_{n+1,m+1}(t)
=
\frac{(n+m+1)!}{n}
\sum_{j=1}^{n-m}
\frac{1}{(j+1)(n-j+m)!}
\binom{t^2-2}{j-1}
{\sf P}_{n-j+1,m}(t).
$$
Consequently,
$$
{\sf P}_{\overline{\eu M}_{0,n+2}}(t)
=
\sum_{m=0}^{n-1}
\frac{(n+m+1)!}{n}
\sum_{j=1}^{n-m}
\frac{1}{(j+1)(n-j+m)!}
\binom{t^2-2}{j-1}
{\sf P}_{n-j+1,m}(t).
$$
Equivalently, if we set ${\sf P}_1=1,$ the refined polynomials ${\sf P_n}$ satisfy the closed recurrence
$$
{\sf P}_{n+1}(s,t)
=
\sum_{j=1}^{n}
\frac{s}{n(j+1)}
\binom{t^2-2}{j-1}
\left[
\prod_{a=0}^{j}
\left(
s\frac{\partial}{\partial s}+n-j+1+a
\right)
\right]
{\sf P}_{n-j+1}(s,t).
$$
\end{thm}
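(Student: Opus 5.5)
The plan is to specialize the recurrences \eqref{recP1} and \eqref{recP2} of the preceding Proposition at $x_k=\pi_k(t)$, and for the middle formula to invoke Theorem~\ref{mainthm1}. These recurrences are polynomial identities in $\Q[x_1,x_2,\dots]$, so they remain valid under any evaluation.

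\textbf{First formula.} Substituting $x_j=\pi_j(t)$ in \eqref{recP1} gives
\[
{\sf P}_{n+1,m+1}(t)=\sum_{j=1}^{n-m}\frac{(n+m+1)!}{n\,(j-1)!\,(n-j+m)!}\,\pi_j(t)\,{\sf P}_{n-j+1,m}(t).
\]
By \eqref{pik} we have $\pi_j(t)/(j-1)!=\frac{1}{j+1}\binom{t^2-2}{j-1}$. Inserting this and pulling $(n+m+1)!/n$ out of the sum yields the first displayed recurrence.

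\textbf{Second formula.} Theorem~\ref{mainthm1} gives ${\sf P}_{\overline{\eu M}_{0,n+2}}(t)={\sf P}_{n+1}(1,t)=\sum_{m'=1}^{n}{\sf P}_{n+1,m'}(t)$. We write $m'=m+1$ with $m=0,\dots,n-1$ and substitute the first recurrence. For $m\geq1$ this is immediate. The only real care point is the boundary term $m=0$, since \eqref{recP1} was stated for $m\geq1$.

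For $m=0$ we adopt the natural convention ${\sf P}_{r,0}=\delta_{r,1}$, which is consistent with $\mc P_1=1$ and $B_{0,0}=1$. With this convention the inner sum over $j$ collapses to $j=n$, and it gives
\[
\frac{(n+1)!}{n\,(n+1)\,0!}\binom{t^2-2}{n-1}=\frac{(n-1)!}{n+1}\binom{t^2-2}{n-1}\cdot\frac{n+1}{1}\cdot\frac{1}{n+1}\cdots
\]
It is cleaner to check directly that ${\sf P}_{n+1,1}=\frac{(n+1)!}{n!}\pi_n=(n+1)\pi_n$, because $B_{n,1}=x_n$. This agrees with the $m=0$ term $\frac{(n+1)!}{n}\cdot\frac{1}{(n+1)\,0!}\binom{t^2-2}{n-1}=\frac{n!}{n}\binom{t^2-2}{n-1}=(n-1)!\binom{t^2-2}{n-1}=(n+1)\pi_n$. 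Equivalently, the $m=0$ case of \eqref{recP1} is the Bell recurrence $B_{n,1}=x_n\,B_{0,0}$. This verification is the main, though minor, obstacle, and it is where I will be explicit about the convention ${\sf P}_{1,0}=1$.

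\textbf{Refined recurrence.} Substitute $x_k=s\,\pi_k(t)$ in \eqref{recP2}. On polynomials in the $x_k$ evaluated at $s\pi_k(t)$, the Euler operator $E$ corresponds to $s\,\partial_s$, because both act by multiplication by $m$ on the degree-$m$ homogeneous part ${\sf P}_{r,m}s^m$. The prefactor becomes $s\,\pi_j(t)/(n\,(j-1)!)=\frac{s}{n(j+1)}\binom{t^2-2}{j-1}$, which gives the stated formula, with ${\sf P}_1=\mc P_1=1$. Alternatively, one can multiply the first recurrence by $s^{m+1}$ and sum over $m$, using $s\partial_s(s^m{\sf P}_{r,m})=m\,s^m{\sf P}_{r,m}$ to replace $(n+m+1)!/(n-j+m)!$ by the product operator. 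The two routes give the same result.
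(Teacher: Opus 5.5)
Your proposal is correct and follows the paper's own argument: specialize \eqref{recP1} at $x_k=\pi_k(t)$ and \eqref{recP2} at $x_k=s\,\pi_k(t)$ (where $E$ becomes $s\,\partial_s$), using $\pi_j(t)/(j-1)!=\frac{1}{j+1}\binom{t^2-2}{j-1}$. Your explicit check of the $m=0$ boundary term via ${\sf P}_{r,0}=\delta_{r,1}$, which the paper leaves implicit in the convention $\mathcal P_1=1$, is a useful addition; just delete the garbled, abandoned display that precedes ``It is cleaner to check directly'', since as written it does not evaluate correctly.
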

\proof
The result directly follows from identities \eqref{recP1} and \eqref{recP2}, by specializing $x_i=\pi_i(t)$.
\endproof

\begin{example}
We illustrate the recurrence for the refined polynomials in the case
corresponding to the computation of ${\sf P}_6(s,t)$.  Put
$$
u:=t^2,
\qquad
\Delta:=s\frac{\partial}{\partial s}.
$$
The previous refined polynomials are
$$
\begin{aligned}
{\sf P}_1(s,t)&=1,\\
{\sf P}_2(s,t)&=s,\\
{\sf P}_3(s,t)&=s(u-2)+3s^2,\\
{\sf P}_4(s,t)&=s(u-2)(u-3)+10s^2(u-2)+15s^3,\\
{\sf P}_5(s,t)&=
s(u-2)(u-3)(u-4)
+5s^2(u-2)(5u-13)
+105s^3(u-2)+105s^4.
\end{aligned}
$$
The recurrence, applied with $n=5$, gives
$$
\begin{aligned}
{\sf P}_6(s,t)
={}&
\frac{s}{10}
(\Delta+5)(\Delta+6)\,{\sf P}_5(s,t)\\
&+
\frac{s}{15}
(u-2)
(\Delta+4)(\Delta+5)(\Delta+6)\,{\sf P}_4(s,t)\\
&+
\frac{s}{20}
\binom{u-2}{2}
(\Delta+3)(\Delta+4)(\Delta+5)(\Delta+6)\,{\sf P}_3(s,t)\\
&+
\frac{s}{25}
\binom{u-2}{3}
(\Delta+2)(\Delta+3)(\Delta+4)(\Delta+5)(\Delta+6)\,{\sf P}_2(s,t)\\
&+
\frac{s}{30}
\binom{u-2}{4}
(\Delta+1)(\Delta+2)(\Delta+3)(\Delta+4)(\Delta+5)(\Delta+6)\,{\sf P}_1(s,t).
\end{aligned}
$$
Evaluating the right-hand side, we obtain
$$
\begin{aligned}
{\sf P}_6(s,t)
={}&
s(u-2)(u-3)(u-4)(u-5)\\
&+
14s^2(u-2)(u-3)(4u-11)\\
&+
70s^3(u-2)(7u-17)\\
&+
1260s^4(u-2)
+
945s^5.
\end{aligned}
$$
In particular, setting $s=1$, one recovers
\[
{\sf P}_6(1,t)={\sf P}_{\overline{\eu M}_{0,7}}(t)
=
t^8+42t^6+127t^4+42t^2+1.
\tag*{\qetr}
\]
\end{example}

\subsection{Two formulas for Betti numbers}
Aluffi--Marcolli--Nascimento give two closed formulas for the Betti numbers in~\cite{AMN25}. The first is deduced directly from their Poincar\'e polynomial formula and expresses the Betti numbers in terms of Stirling numbers of the first and second kinds; see~\cite[Cor.\,1.2]{AMN25}. The second expresses them in terms of Bernoulli numbers; see~\cite[Cor.\,1.5]{AMN25}.

In this section we give two alternative formulas.

The first one is obtained by extracting coefficients from the identity in Theorem~\ref{mainthm1}.

For $k\geq 1$, write
\[
\pi_k(t)=\sum_{a=0}^{k-1} \bt_{k,a}\,t^{2a},
\qquad \bt_{k,a}\in\mathbb{Q}.
\]
\begin{lem}
We have
\begin{equation}\label{eq:cka}
\bt_{k,a}
=
\frac{1}{k+1}
\sum_{q=a}^{k-1}
(-2)^{q-a}\binom{q}{a}
s(k-1,q)
,
\qquad 0\le a\le k-1.
\end{equation}
\end{lem}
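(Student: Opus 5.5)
The plan is to expand the numerator of $\pi_k(t)$ as a falling factorial in a shifted variable and then re-expand in powers of $t^2$.

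Set $u:=t^2$ and $y:=u-2$. By \eqref{pik},
\[
(k+1)\,\pi_k(t)=(u-2)(u-3)\cdots(u-k)=y(y-1)\cdots(y-k+2)=y^{\underline{k-1}} .
\]
So the numerator is exactly the falling factorial of length $k-1$ in $y$. For $k=1$ it is the empty product $1=y^{\underline 0}$, which is consistent with \eqref{pik}.

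Next, apply the defining expansion of the signed Stirling numbers of the first kind recalled in the Notations:
\[
y^{\underline{k-1}}=\sum_{q=0}^{k-1}s(k-1,q)\,y^{q}=\sum_{q=0}^{k-1}s(k-1,q)\,(u-2)^{q}.
\]

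Then expand each power by the binomial theorem:
\[
(u-2)^q=\sum_{a=0}^{q}\binom{q}{a}(-2)^{q-a}u^{a}.
\]
Substituting, we get a double sum over $0\le a\le q\le k-1$. We interchange the order of summation, summing over $a$ from $0$ to $k-1$ on the outside and over $q$ from $a$ to $k-1$ on the inside.

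Finally, read off the coefficient of $u^a=t^{2a}$ and divide by $k+1$. This gives \eqref{eq:cka} for every $0\le a\le k-1$. The degree bound $\deg_u \pi_k=k-1$ is immediate from the product form, so no other coefficients $\bt_{k,a}$ occur.

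The argument is a routine computation. The only point needing care is the bookkeeping: the shift identifying $u-j$ with $y-(j-2)$, so that the product is a falling factorial of length exactly $k-1$, and the summation range after interchanging the sums. The conventions $s(0,0)=1$ and $s(n,k)=0$ outside $0\le k\le n$ handle the edge cases $k=1$ and $q=0$ automatically.
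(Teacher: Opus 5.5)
Your proposal is correct and follows the paper's own argument: write the numerator of $\pi_k(t)$ as the falling factorial $(t^2-2)^{\underline{k-1}}$, expand it with the signed Stirling numbers $s(k-1,q)$, binomially expand each $(t^2-2)^q$, and read off the coefficient of $t^{2a}$. Your handling of the index shift and of the interchange of summation ranges is accurate.
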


\proof
A direct consequence of the identity \(
x^{\underline{m}}=x(x-1)\dots(x-m+1)=\sum_{q=0}^{m}s(m,q)x^q,
\)
and the binomial expansion for $(t^2-2)^q$.
\endproof
Combining the lemma above with the following theorem gives an explicit closed formula for all coefficients of
\({\sf P}_{\overline{\eu M}_{0,n+1}}(t)\).

\begin{thm}\label{mainthm2}For all $0\leq \ell\leq n-2$, we have
\[{b}_{2\ell}(\overline{\eu M}_{0,n+1})=
\sum_{\substack{
r_{k,a}\ge 0\\
k=1,\dots,n-1\\
a=0,\dots,k-1\\
\sum\limits_{k,a} k\, r_{k,a} = n-1 \\
\sum\limits_{k,a} a\, r_{k,a} = \ell}
}
\frac{(n + \sum\limits_{k,a} r_{k,a} - 1)!}
{\prod\limits_{k=1}^{n-1} (k!)^{\sum\limits_{a=0}^{k-1} r_{k,a}}}
\;
\prod_{k=1}^{n-1} \prod_{a=0}^{k-1}
\frac{\bt_{k,a}^{\,r_{k,a}}}{r_{k,a}!}.
\]
\end{thm}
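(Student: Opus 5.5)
The plan is to extract coefficients directly from Theorem~\ref{mainthm1}, using the explicit monomial expansion of the partial Bell polynomials. Recall that
\[
B_{N,m}(y_1,y_2,\dots)=\sum_{\substack{j_1+j_2+\dots=m\\ \sum_k k\,j_k=N}}\frac{N!}{\prod_k j_k!\,(k!)^{j_k}}\prod_k y_k^{j_k}.
\]
Summing over $m$ with the weight $\frac{(N+m)!}{N!}$ and taking $N=n-1$, we obtain
\[
\mc P_n(\bm y)=\sum_{\substack{j_k\ge0\\ \sum_k k j_k=n-1}}\frac{(n-1+\sum_k j_k)!}{\prod_k j_k!\,(k!)^{j_k}}\prod_{k=1}^{n-1}y_k^{j_k}.
\]
The factor $(n-1)!$ cancels between the Bell coefficient and the prefactor. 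This is the first step.

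Next, substitute $y_k=\pi_k(t)=\sum_{a=0}^{k-1}\bt_{k,a}t^{2a}$ and expand each power $\pi_k^{j_k}$ by the multinomial theorem. Writing $j_k=\sum_{a=0}^{k-1}r_{k,a}$, we get
\[
\pi_k(t)^{j_k}=j_k!\sum_{\sum_a r_{k,a}=j_k}\prod_{a=0}^{k-1}\frac{\bt_{k,a}^{r_{k,a}}}{r_{k,a}!}\,t^{2a r_{k,a}}.
\]
The factor $j_k!$ cancels the $j_k!$ in the denominator above. The sum over pairs $(j_\bullet, r_{\bullet,\bullet})$ then collapses to a free sum over arrays $r_{k,a}\ge0$ subject to $\sum k\,r_{k,a}=n-1$. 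In this sum, $\sum_k j_k$ is replaced by $\sum_{k,a}r_{k,a}$ and $(k!)^{j_k}$ by $(k!)^{\sum_a r_{k,a}}$. The resulting monomial in $t$ is $t^{2\sum a\,r_{k,a}}$.

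Finally, by Theorem~\ref{mainthm1} the coefficient of $t^{2\ell}$ in this expansion equals $b_{2\ell}(\overline{\eu M}_{0,n+1})$. Extracting it amounts to imposing the constraint $\sum a\,r_{k,a}=\ell$, which yields exactly the stated formula. The range $0\le\ell\le n-2$ is automatic, since $\deg\pi_k=2(k-1)$ gives total degree at most $2\sum(k-1)j_k\le 2(n-2)$.

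The only delicate point is the bookkeeping of factorials in the first two steps. The Bell-polynomial monomial coefficient must be written correctly, and the multinomial $j_k!$ must cancel cleanly. After that the argument is a routine reindexing, so I expect no real obstacle.
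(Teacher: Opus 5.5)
Your proposal is correct and is essentially the paper's own argument. You expand $\mathcal P_n$ through the monomial form of the partial Bell polynomials, giving the sum over $(j_k)$ with coefficient $\frac{(n-1+\sum_k j_k)!}{\prod_k j_k!\,(k!)^{j_k}}$; then you expand each $\pi_k(t)^{j_k}$ multinomially so that the $j_k!$ cancel, and read off the coefficient of $t^{2\ell}$ (your degree remark is harmless, since the identity in fact holds for all $\ell\ge 0$, with both sides vanishing for $\ell>n-2$).
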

 
 Theorem \ref{mainthm2} will be proved in Section \ref{proofbetti1}.
 
We now state a second closed formula. To this end, we introduce two auxiliary quantities.
For any $a,b,c \in \mathbb{Z}_{\geq 0}$, define
\begin{multline*}
\Om(a,b,c):=\\=\sum_{\substack{i_1+\dots+i_{c+2}=a\\ i_1,\dots,i_{c+2}\ge 0}}(-1)^{\sum_{r=1}^{c+1}i_r}\binom{a}{i_1,\dots,i_{c+2}}
\frac{1}{\prod_{j=1}^{c+1}j!^{i_j}}\frac{\left(\sum_{j=1}^{c+1}ji_j\right)!\, b!}{(\sum_{r=1}^{c+1}i_r+b)!}\, s\left(\sum_{r=1}^{c+1}i_r+b,\, \sum_{j=1}^{c+1}ji_j\right),
\end{multline*}
and
\[
\Omega^{[1]}(a,b,c)
:= \frac{a!}{b!}
\sum_{\substack{
\sum_{l=1}^{a-b+1} h_l = b \\
\sum_{l=1}^{a-b+1} l h_l = a
}}
\sum_{\substack{
\sum_{l=1}^{a-b+1} \sigma_l = c
}}
\binom{b}{h_1,h_2,\dots}\binom{c}{\sigma_1,\sigma_2,\dots}
\prod_{l=1}^{a-b+1}
\Omega(h_l,\sigma_l,l),
\]with the convention $\Om^{[1]}(a,b,c)=0$ if $b>a$.

The following theorem gives an alternative formula for Betti numbers in terms of $\Om(a,b,c)$ and $\Om^{[1]}(a,b,c)$.

\begin{thm}\label{betti2}
We have
\[b_{2\ell}(\overline{\eu M}_{0,n+1})=\sum_{m=0}^\ell\sum_{i=0}^{n-1}\sum_{k=m}^{m+i}\binom{n+k-1}{i,\,n-i-1,\,m,\,k-m}
\,\Om(k-m,i,0)\cdot \Om^{[1]}(\ell,m,n-1-i).
\]
\end{thm}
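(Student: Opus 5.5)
The plan is to go back to the single coefficient-extraction identity behind Theorem~\ref{mainthm1}, and to expand it along a different route. This time the powers of $t^2$ are separated out before the coefficient in the auxiliary variable is extracted.

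\emph{Starting identity.} Write $u=t^2$. In the Getzler--Manin framework the compositional inverse of $\phi(x)=x+\sum_{n\ge2}{\sf P}_{\overline{\eu M}_{0,n+1}}(t)\,x^n/n!$ is known in closed form, as $y\,(1-g(y))$ with
\[
g(y)=\sum_{k\ge1}\pi_k(t)\,\frac{y^k}{k!}.
\]
Using the binomial series, $g$ is an explicit expression in $(1+y)^{u}$, $y$ and $u$. The proof of Theorem~\ref{mainthm1} proceeds via Lagrange inversion: it gives
\[
{\sf P}_{\overline{\eu M}_{0,n+1}}(t)=(n-1)!\,[y^{n-1}]\,(1-g(y))^{-n}
=(n-1)!\,[y^{n-1}]\sum_{k\ge0}\binom{n+k-1}{k}g(y)^k .
\]
The factor $\binom{n+k-1}{k}$ is exactly the source of the coefficients $(n+m-1)!/(n-1)!$ in $\mathcal P_n$. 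I take this identity as the starting point; it is already established by the time Theorem~\ref{betti2} is proved.

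\emph{Splitting $g$ and $y^{n-1}$.} Next I split $g=g_0+g_1$. Here $g_0$ is the part not involving the exponential $(1+y)^u=\exp(u\log(1+y))$, and $g_1$ collects the $u$-dependent part after expanding $\exp(u\log(1+y))$ in powers of $u$. Expanding $g^k=\sum_m\binom{k}{m}g_1^m g_0^{k-m}$ and splitting the extracted degree $y^{n-1}$ as $y^{i}\cdot y^{n-1-i}$ produces the prefactor
\[
(n-1)!\binom{n+k-1}{k}\binom{k}{m}\frac{1}{i!\,(n-1-i)!}=\binom{n+k-1}{i,\,n-i-1,\,m,\,k-m}.
\]
The remaining factors must then be identified with $\Omega(k-m,i,0)$, the suitably normalized $[y^i]$ coefficient of $g_0^{k-m}$, and with $\Omega^{[1]}(\ell,m,n-1-i)$, the normalized $[u^\ell y^{n-1-i}]$ coefficient of $g_1^m$. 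The index $m$ is bounded by $\ell$ because each factor of $g_1$ carries at least one power of $u$.

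\emph{Computing the two coefficients.} For $\Omega(a,b,c)$ I expand a power of a series of the shape $1-\sum_{j=1}^{c+1}(\cdot)^j/j!$ by the multinomial theorem. This gives the alternating signs, the multinomial $\binom{a}{i_1,\dots,i_{c+2}}$ and the factors $1/j!^{i_j}$. A power of $\log(1+y)$, or a falling-factorial expansion, then produces $s\bigl(\sum i_r+b,\sum j i_j\bigr)$ together with the factorial ratio $(\sum j i_j)!\,b!/(\sum i_r+b)!$. Here I use $\log(1+y)^p/p!=\sum_N s(N,p)\,y^N/N!$. For $\Omega^{[1]}$, the $u$-expansion of $g_1$ is a composition: the coefficient of $u^\ell$ in $g_1^m$ is a Faà di Bruno / partial-Bell-type sum over $(h_l)$ with $\sum h_l=m$ and $\sum l\,h_l=\ell$. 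Each block of size $l$ contributes an $\Omega(h_l,\sigma_l,l)$. The $y$-degree $n-1-i$ is distributed among the blocks via $\binom{c}{\sigma_1,\sigma_2,\dots}$, which accounts for the double multinomial structure and the prefactor $a!/b!$.

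\emph{Main obstacle.} I expect the main difficulty to be this last bookkeeping step. One has to choose the split $g=g_0+g_1$ and the normalizations of $\Omega$ and $\Omega^{[1]}$ so that every factorial matches exactly, in particular the $b!/(\sum i_r+b)!$ factor and the convention $\Omega^{[1]}(a,b,c)=0$ for $b>a$. I would first check the resulting formula against ${\sf P}_{\overline{\eu M}_{0,5}}$ and ${\sf P}_{\overline{\eu M}_{0,6}}$ before fixing these normalizations.
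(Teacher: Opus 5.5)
\noindent Your strategy is the paper's own, written as coefficient extraction instead of differentiation. The paper expands the series in $w$ as $\sum_{\ell\ge0}w^\ell\phi_\ell(z)$, with $\phi_\ell(z)=1-\frac1z\sum_{p=1}^{\ell+1}\frac{\log(1+z)^p}{p!}$ obtained from the double-integral form of $g$. It then applies Fa\`a di Bruno in $w$ and Leibniz in $z$, which is exactly your binomial split together with $y^{n-1}=y^i\cdot y^{n-1-i}$.

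Two corrections to your setup are needed. First, your $g_0$ must be $g|_{u=0}=1-\log(1+y)/y$; ``the part not involving the exponential'' is not even a power series. Second, you need the closed form $[u^l]g=\phi_l(y)$ for every $l\ge1$, since only then are the block factors $\Omega(h_l,\sigma_l,l)=\partial_y^{\sigma_l}|_{0}\,\phi_l^{h_l}$. This follows by expanding $(1+y)^u=\sum_p u^p\log(1+y)^p/p!$ and dividing by $yu(u-1)$, but you have not done it.

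The real problem is the last step, which you flagged yourself, and no choice of normalizations fixes it. In your extraction, $[u^\ell]\,g_1^m=\sum_h\binom{m}{h_1,h_2,\dots}\prod_l\phi_l^{h_l}$ is an \emph{ordinary} partial Bell polynomial, so
\[
(n-1-i)!\,[u^\ell y^{n-1-i}]\,g_1^m=\frac{m!}{\ell!}\,\Omega^{[1]}(\ell,m,n-1-i).
\]
Nothing in the computation produces the prefactor $a!/b!=\ell!/m!$ built into $\Omega^{[1]}$. Done correctly, your argument yields the stated formula with an extra factor $m!/\ell!$ in each summand.

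The formula as printed is in fact false, and the check you proposed shows it. Take $n=4$, $\ell=2$. The only nonzero summand is $m=1$, $i=0$, $k=1$, with $\binom{4}{0,3,1,0}=4$, $\Omega(0,0,0)=1$ and $\Omega^{[1]}(2,1,3)=2\,\Omega(1,3,2)=2\cdot\frac14$. This gives $2$, whereas $b_4(\overline{\eu M}_{0,5})=1$.

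The paper's proof has the same slip. After Fa\`a di Bruno it correctly has $\frac{k^{\underline m}}{\ell!}\,\phi_0^{k-m}B_{\ell,m}(1!\phi_1,2!\phi_2,\dots)$. The next display replaces $\frac1{\ell!}$ by $\frac1{m!}$, although $\Omega^{[1]}$ already contains the factor $\frac{\ell!}{m!}$ coming from $B_{\ell,m}(1!x_1,2!x_2,\dots)=\frac{\ell!}{m!}\hat B_{\ell,m}(x)$. The correct statement is obtained by deleting the prefactor $a!/b!$ from the definition of $\Omega^{[1]}$, and that corrected statement is what your argument proves.
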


Theorem \ref{betti2} will be proved in Section \ref{proofbetti2}.

\subsection{The $\thi$-sequence and Euler characteristics} Set $\eps_k := \frac{k-1}{k+1}$ for any $k\in\N$.
Consider the sequence $\thi\in\Q^{\N}$ defined by
\begin{equation}\label{eq:def-bn}
\thi_1 := 1,\qquad \thi_n
:= \mc P_n(\eps_1,\dots,\eps_{n-1}), 
\quad n\geq 2.
\end{equation}
The first terms of the $\thi$-sequence are
\begin{multline*}1, \quad 0,\quad 1,\quad 2,\quad 13,\quad 74,\quad 593,\quad 5298,\quad 55781,\quad 659722,\quad 8745185,\quad 127941538,\\ 
2050576669,\quad 35712158442,\quad 671672584945,\dots
\end{multline*}

The integrality suggested by the initial values holds in general.
\begin{prop}\label{integrprop}
For all $n \geq 1$, one has $\thi_n \in \mathbb{Z}_{\geq 0}$.
\end{prop}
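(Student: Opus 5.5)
The plan is to turn the Bell-polynomial definition of $\thi_n$ into a compositional inverse via Lagrange inversion, and then read off both integrality and nonnegativity from a first-order differential equation. The starting point is the exponential formula. If $X(z):=\sum_{k\ge1}x_k z^k/k!$, then $B_{n-1,m}(\bm x)=\frac{(n-1)!}{m!}[z^{n-1}]X(z)^m$. Together with $\sum_{m\ge0}\binom{n+m-1}{m}X^m=(1-X)^{-n}$, this gives
\[
\mc P_n(\bm x)=(n-1)!\,[z^{n-1}]\bigl(1-X(z)\bigr)^{-n},\qquad n\ge2 .
\]
The $m=0$ term contributes only to $[z^0]$, and $[z^{n-1}]X^m=0$ for $m>n-1$, so it is harmless. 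By Lagrange inversion with $\phi=(1-X)^{-1}$, the right-hand side is $n!\,[z^n]w(z)$, where $w$ is the power series solving $w=z\,\phi(w)$, that is, $w\,(1-X(w))=z$. Hence $\mc P_n(\bm x)$ is the $n$-th exponential coefficient of $w$. This is also consistent with $\thi_1=1=w'(0)$.

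Next I specialise to $x_k=\eps_k=1-\frac{2}{k+1}$. This gives
\[
X(z)=(e^z-1)-\frac{2(e^z-1-z)}{z}.
\]
Substituting into $w(1-X(w))=z$ and simplifying, the defining equation becomes
\[
(2-w)e^w-2=z .
\]
So $w$ is the compositional inverse of $F(w)=(2-w)e^w-2$, which satisfies $F(0)=0$ and $F'(w)=(1-w)e^w$, so $F'(0)=1$. Therefore $\thi_n=w^{(n)}(0)$ for all $n\ge1$.

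Differentiating $F(w(z))=z$ gives the autonomous equation
\[
w'(z)=D(w(z)),\qquad D(y):=\frac{e^{-y}}{1-y}=\sum_{k\ge0}d_k\frac{y^k}{k!}.
\]
Here $d_k$ are the derangement numbers, which are nonnegative integers. By induction on $n$, $w^{(n)}$ is a polynomial with nonnegative integer coefficients in the quantities $D^{(j)}(w)$, $j\ge0$. Indeed, $w'=D(w)$, and differentiating any such monomial uses only $\frac{d}{dz}D^{(j)}(w)=D^{(j+1)}(w)\,D(w)$ and the Leibniz rule, which preserve nonnegative integer coefficients. Evaluating at $z=0$, where $w(0)=0$ and $D^{(j)}(0)=d_j\in\Z_{\ge0}$, gives $\thi_n\in\Z_{\ge0}$.

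As sanity checks, this gives $\thi_2=d_1d_0=0$ and $\thi_3=d_2d_0^2+d_1^2d_0=1$, matching the listed values. The main obstacle is the nonnegativity, since integrality alone would already follow from $F$ having integer exponential coefficients and $F'(0)=1$. The point to verify carefully is that $1/F'(w)$ is exactly the derangement EGF, so that no signs appear in the iteration.
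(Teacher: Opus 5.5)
Your argument is correct, and it takes a genuinely different route from the paper.

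Your first half, the Lagrange inversion that turns $\mc P_n(\bm x)$ into $n![z^n]w$ with $w(1-X(w))=z$ and then specialises to $(2-w)e^w-2=z$, is the computation the paper performs in the reverse direction in the proof of Theorem~\ref{mainthm}. There it shows $\thi_n=n!\,a_n$ for the compositional inverse of $\Phi(u)=(2-u)e^u-2$.

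The paper's proof of Proposition~\ref{integrprop} does not use the inverse function at all. It starts from the explicit sum \eqref{bexplicit} and substitutes $k_j=i_j+1\ge 2$. It then reads the terms as $\prod_j(k_j-2)$ times the number of partitions of an $(n+m-1)$-element set into $m$ unlabeled blocks of sizes $k_1,\dots,k_m$. This gets positivity straight from the definition, presenting $\thi_n$ as a weighted count of set partitions whose blocks all have size at least $2$.

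Your autonomous equation $w'=e^{-w}/(1-w)$ gives a different manifestly positive expression: a universal $\Z_{\ge0}$-polynomial in the derangement numbers. Equivalently, it is a sum over increasing trees on $[n]$ with weight $d_k$ at each node having $k$ children; for example $\thi_5=d_4+4d_2^2=13$. It also makes the link $\chi(z)=e^{w(z)}$ transparent.

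Your route also avoids a bookkeeping slip in the paper's proof. The displayed identity $\frac{1}{m!}\binom{n+m-1}{k_1,\dots,k_m}=\frac{(n+m-1)!}{\prod_j k_j!\prod_r m_r!}$ holds only when all the $k_j$ coincide. So a single composition summand need not be an integer: for $n=14$ and $(k_1,k_2,k_3)=(3,4,9)$ it equals $\frac{1}{6}\binom{16}{3,4,9}\cdot 14=\frac{5605600}{6}$. The paper's argument is easily repaired by first summing over the $m!/\prod_r m_r!$ rearrangements of each multiset of parts, a step your approach never needs.
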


\begin{thm}\label{mainthm}
For all $n\geq 2$, we have
\[\chi(\overline{\eu M}_{0,n+1})=B_{n}(\thi_1,\thi_2,\dots,\thi_n).\]
Equivalently, we have
\begin{multline*}\chi(\overline{\eu M}_{0,n+1})=\det \begin{pmatrix}
\thi_{1} & \binom{n-1}{1}\thi_{2} & \binom{n-1}{2}\thi_{3} & \binom{n-1}{3}\thi_{4} & \cdots & \cdots & \thi_{n} \\
-1   & \thi_{1}               & \binom{n-2}{1}\thi_{2} & \binom{n-2}{2}\thi_{3} & \cdots & \cdots & \thi_{n-1} \\
0    & -1                  & \thi_{1}               & \binom{n-3}{1}\thi_{2} & \cdots & \cdots & \thi_{n-2} \\
0    & 0                   & -1                  & \thi_{1}               & \cdots & \cdots & \thi_{n-3} \\
0    & 0                   & 0                   & -1                  & \cdots & \cdots & \thi_{n-4} \\
\vdots & \vdots            & \vdots              & \vdots              & \ddots & \ddots & \vdots \\
0    & 0                   & 0                   & 0                   & \cdots & -1     & \thi_{1}
\end{pmatrix}\\
=\det\begin{pmatrix}
\frac{\thi_{1}}{0!} & \frac{\thi_{2}}{1!} & \frac{\thi_{3}}{2!} & \frac{\thi_{4}}{3!} & \cdots & \cdots & \frac{\thi_{n}}{(n-1)!} \\
-1   & \frac{\thi_{1}}{0!} & \frac{\thi_{2}}{1!} & \frac{\thi_{3}}{2!} & \cdots & \cdots & \frac{\thi_{n-1}}{(n-2)!} \\
0    & -2                  & \frac{\thi_{1}}{0!}               & \frac{\thi_{2}}{1!} & \cdots & \cdots & \frac{\thi_{n-2}}{(n-3)!} \\
0    & 0                   & -3                  & \frac{\thi_{1}}{0!}               & \cdots & \cdots & \frac{\thi_{n-3}}{(n-4)!} \\
0    & 0                   & 0                   & -4                  & \cdots & \cdots & \frac{\thi_{n-4}}{(n-5)!} \\
\vdots & \vdots            & \vdots              & \vdots              & \ddots & \ddots & \vdots \\
0    & 0                   & 0                   & 0                   & \cdots & -(n-1)     & \frac{\thi_{1}}{0!}
\end{pmatrix}.
\end{multline*}
\end{thm}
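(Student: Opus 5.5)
The plan is to turn the statement into an identity of exponential generating functions and then check that identity against the Getzler--Manin functional equation specialized at $t^2=1$. Recall that $\sum_{n\ge0}B_n(\thi_1,\dots,\thi_n)\frac{z^n}{n!}=\exp\Theta(z)$, where $\Theta(z):=\sum_{k\ge1}\thi_k\frac{z^k}{k!}$. Set
\[
\Phi(z):=\sum_{n\ge1}\chi(\overline{\eu M}_{0,n+1})\frac{z^n}{n!},
\]
using the convention $\chi(\overline{\eu M}_{0,2})=1$, which is consistent with $B_1=\thi_1=1$. Then the first claim of Theorem \ref{mainthm} is equivalent to $1+\Phi(z)=e^{\Theta(z)}$, that is, $\Theta=\log(1+\Phi)$.

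\emph{Step 1: a Lagrange-inversion description of $\mc P_n$.} For a sequence $\bm x$, put $g(w)=\sum_k x_k\frac{w^k}{k!}$. Then $g^m=m!\sum_N B_{N,m}(\bm x)\frac{w^N}{N!}$, and $\binom{n+m-1}{m}m!=\frac{(n+m-1)!}{(n-1)!}$. Expanding $(1-g)^{-n}=\sum_m\binom{n+m-1}{m}g^m$ therefore gives
\[
\mc P_n(\bm x)=(n-1)!\,[w^{n-1}]\,(1-g(w))^{-n}.
\]
By Lagrange inversion this says exactly that $Y(z):=z+\sum_{n\ge2}\mc P_n(\bm x)\frac{z^n}{n!}$ is the compositional inverse of $h(w):=w\,(1-g(w))$. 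I expect this same mechanism underlies the proof of Theorem \ref{mainthm1}.

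\emph{Step 2: specialize to $x_k=\eps_k$.} Using $\sum_{k\ge1}\frac{w^k}{(k+1)!}=\frac{e^w-1-w}{w}$ and $\sum_{k\ge1}\frac{k\,w^k}{(k+1)!}=\frac{e^w-1}{w}-\frac{e^w-1-w}{w}$, one finds
\[
g(w)=\frac{e^w-1}{w}-\frac{2(e^w-1-w)}{w^2},\qquad
h(w)=w-(e^w-1)+\frac{2(e^w-1-w)}{w}.
\]
Since $\thi_1=1$, the series $\Theta$ coincides with $Y$ for this choice of $\bm x$. Hence $h(\Theta(z))=z$.

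\emph{Step 3: the main obstacle.} It remains to show that $\Phi:=e^{\Theta}-1$ satisfies the Getzler--Manin equation at $t=-1$. Note that $\pi_k(1)\neq\eps_k$, so Theorem \ref{mainthm1} cannot simply be plugged in. Substituting $e^\Theta=1+\Phi$, the relation $h(\Theta)=z$ becomes
\[
z=\log(1+\Phi)-\Phi+\frac{2\bigl(\Phi-\log(1+\Phi)\bigr)}{\log(1+\Phi)}.
\]
I will compare this with the functional equation for the Poincar\'e generating series at $t^2=1$, as recalled from Getzler and Manin, and rearrange until the two coincide. This may require first differentiating or integrating in $z$ once, to reconcile the normalization of Manin's series, whose index shifts by one relative to $\Phi$. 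I expect this matching of normalizations to be the only delicate point. As a sanity check, I will verify the first few coefficients: $\chi(\overline{\eu M}_{0,4})=2$, $\chi(\overline{\eu M}_{0,5})=7$ and $\chi(\overline{\eu M}_{0,6})=34$ should equal $B_n(1,0,1,2,\dots)$.

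\emph{Step 4: the determinants.} The Hessenberg formulas are standard consequences of the recursion
\[
B_n=\sum_{k=1}^n\binom{n-1}{k-1}\thi_k\,B_{n-k},
\]
which comes from differentiating $e^\Theta$. Regarded as a lower-triangular linear system for $(B_0,\dots,B_n)$, it gives the first determinant by Cramer's rule, equivalently by expanding along the first row and inducting on $n$. Rescaling rows and columns by factorials then turns the binomial entries into the entries $\thi_k/(k-1)!$ with subdiagonal $-1,-2,\dots,-(n-1)$, which is the second determinant.
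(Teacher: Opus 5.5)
Your Steps 1 and 4 are essentially sound; Step 1 is the same Lagrange-inversion mechanism the paper uses, run in the opposite direction. The proof breaks in Steps 2--3, which carry all the content.

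\textbf{Algebra error in Step 2.} One has $\sum_{k\ge1}\frac{k\,w^k}{(k+1)!}=(e^w-1)-\frac{e^w-1-w}{w}$. Your expression $\frac{e^w-1}{w}-\frac{e^w-1-w}{w}$ is identically $1$. The correct series is therefore $g(w)=e^w-1-\frac{2(e^w-1-w)}{w}$ (yours is this divided by $w$), and
\[
h(w)=w\bigl(1-g(w)\bigr)=(2-w)e^w-2=w-\tfrac16w^3+\cdots .
\]
Your $h$ instead expands as $w-\tfrac16w^2+\cdots$. Its inverse would give $\thi_2=\tfrac13$, contradicting $\thi_2=\mc P_2(\eps_1)=2\eps_1=0$. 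Consequently the identity displayed in Step 3 is false: substituting the true series $1+\Phi$ into its right-hand side gives $z-\tfrac16z^2+O(z^3)$. No rearrangement can match it with Getzler--Manin. Your proposed sanity check would not detect this, because it uses the tabulated values $\thi_k$ rather than your $h$.

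\textbf{Step 3 is missing, and the suggested route fails.} Step 3 is only announced, and the route you suggest does not work. At $t^2=1$ the functional equation \eqref{funeqgen} collapses to the tautology $1+y=y+1$. There is also no index shift to reconcile, because your $1+\Phi(z)$ is exactly $1+F(z,-1)$. What you need is the differential equation \eqref{diffeqgen} at $t=-1$. For $Y:=1+F(z,-1)$ it reads $Y'=Y/(2+z-Y)$, $Y(0)=1$. Treating $z$ as a function of $Y$, this is linear with integrating factor $Y^{-1}$. Integrating gives $z+2=2Y-Y\log Y$, i.e.\ $h(\log Y)=z$ with the corrected $h$. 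Equivalently, let $t^2\to1$ coefficientwise in \eqref{Ggenfunc} to get $G(x,1)=2x-(1+x)\log(1+x)$, and note that $h(w)=G(e^w-1,1)$. Since $h'(0)=1$, $h$ has a unique compositional inverse, so $\log Y=\Theta$, which is the claim.

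With these repairs your argument becomes the paper's. The paper integrates the same ODE to $z=(2-u)e^u-2$ with $Y=e^u$. It then applies Lagrange inversion using $(2-u)e^u-2=u\bigl(1-\sum_{k\ge1}\eps_k u^k/k!\bigr)$.

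\textbf{A small point in Step 4.} The diagonal rescaling by $(n-i)!$ yields the subdiagonal $-(n-1),\dots,-1$ from top to bottom. To obtain the stated order you must also reflect across the antidiagonal, $H\mapsto JH^{T}J$ with $J$ the exchange matrix. This reflection preserves the determinant and the Toeplitz upper part.
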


\begin{example}
For instance, we have\footnote{Notice that the identity $\thi_2=0$ simplifies considerably the expression of the complete Bell polynomial.}
\begin{multline*}
\chi(\overline{\eu M}_{0,10})=\thi_1^9+84 \thi_3 \thi_1^6+126 \thi_4 \thi_1^5+126 \thi_5 \thi_1^4+840 \thi_3^2 \thi_1^3+84 \thi_6 \thi_1^3+1260 \thi_3 \thi_4 \thi_1^2\\+36 \thi_7 \thi_1^2+315 \thi_4^2 \thi_1+504 \thi_3 \thi_5 \thi_1+9 \thi_8 \thi_1+280 \thi_3^3+126 \thi_4 \thi_5+84 \thi_3 \thi_6+\thi_9
\\ =153946.\tag*{\qetr}
\end{multline*}
\end{example}

Proposition \ref{integrprop} and Theorem \ref{mainthm} will be proved in Sections \ref{proofintegrprop} and \ref{proofmainthm}, respectively.

\begin{cor}\label{corlinearrechi}
For all $n\geq 0$, we have 
\beq\label{rec1} \chi(\overline{\eu M}_{0,n+2})=\sum_{k=0}^n\binom{n}{k}\thi_{k+1}\chi(\overline{\eu M}_{0,n-k+1}),
\eeq where we set $\chi(\overline{\eu M}_{0,1})=\chi(\overline{\eu M}_{0,2})=1$.
\end{cor}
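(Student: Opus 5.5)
This corollary should follow directly from Theorem~\ref{mainthm} together with the standard recurrence for complete Bell polynomials recalled in Appendix~\ref{appbell}:
\[
B_{n+1}(x_1,\dots,x_{n+1})=\sum_{k=0}^{n}\binom{n}{k}x_{k+1}B_{n-k}(x_1,\dots,x_{n-k}),\qquad B_0=1 .
\]
If this recurrence is not stated explicitly there, I would derive it from the exponential generating function
\[
\sum_{n\ge0}B_n\frac{z^n}{n!}=\exp\Big(\sum_{j\ge1}x_j\frac{z^j}{j!}\Big).
\]
Differentiating in \(z\) gives \(F'=G'F\), where \(G=\sum_j x_jz^j/j!\). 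Comparing coefficients of \(z^n/n!\) then yields exactly the binomial convolution above.

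Next I specialize \(x_j=\thi_j\) and put \(c_n:=B_n(\thi_1,\dots,\thi_n)\) for \(n\ge0\). Theorem~\ref{mainthm} gives \(c_n=\chi(\overline{\eu M}_{0,n+1})\) for all \(n\ge2\). The low indices must be checked against the stated conventions.

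\begin{itemize}
\item For \(n=0\): \(c_0=1=\chi(\overline{\eu M}_{0,1})\).
\item For \(n=1\): \(c_1=\thi_1=1=\chi(\overline{\eu M}_{0,2})\).
\item For \(n=2\): \(c_2=\thi_1^2+\thi_2=1=\chi(\overline{\eu M}_{0,3})\), which is consistent with Theorem~\ref{mainthm} since \(\overline{\eu M}_{0,3}\) is a point.
\end{itemize}

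So with the conventions \(\chi(\overline{\eu M}_{0,1})=\chi(\overline{\eu M}_{0,2})=1\), the identity \(c_n=\chi(\overline{\eu M}_{0,n+1})\) holds for every \(n\ge0\).

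Substituting into the Bell recurrence gives
\[
\chi(\overline{\eu M}_{0,n+2})=c_{n+1}=\sum_{k=0}^n\binom{n}{k}\thi_{k+1}\,c_{n-k}=\sum_{k=0}^n\binom{n}{k}\thi_{k+1}\,\chi(\overline{\eu M}_{0,n-k+1})
\]
for all \(n\ge0\), which is \eqref{rec1}. The only point needing care is this bookkeeping at the unstable indices \(n=0,1\), where Theorem~\ref{mainthm} does not apply and the stated conventions are used instead.
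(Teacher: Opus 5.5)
Your proposal is correct and follows the paper's own proof: specialize the complete Bell recurrence $B_{n+1}=\sum_{k=0}^{n}\binom{n}{k}x_{k+1}B_{n-k}$ at $x_j=\thi_j$, then invoke Theorem~\ref{mainthm}. You also check the unstable indices $n=0,1$ explicitly against the conventions $\chi(\overline{\eu M}_{0,1})=\chi(\overline{\eu M}_{0,2})=1$. The paper leaves this check implicit; in its proof it corresponds to the terms $1+z$ of the generating function $\chi(z)=\exp\sum_{n}\thi_n z^n/n!$.
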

\proof
This directly follows from the recursive definition of complete Bell polynomials
\[B_0:=1,\qquad B_{n+1}(x_1,\dots,x_{n+1})=\sum_{k=0}^n\binom{n}{k}B_{n-k}(x_1,\dots,x_{n-k})x_{k+1},\quad n\geq 0.\tag*{\qed}
\]

\begin{rem}
It is interesting to compare the recursion formula \eqref{rec1} with the one obtained by Yu.I.\,Manin (and essentially equivalent to the one of S.\,Keel):
\beq\label{rec2}
\chi(\overline{\eu M}_{0,n+2})=\chi(\overline{\eu M}_{0,n+1})+\sum_{\substack{i+j=n+1\\ i\geq 2}}\binom{n}{i}\chi(\overline{\eu M}_{0,i+1})\chi(\overline{\eu M}_{0,j+1}), \quad n\geq 1.
\eeq
While Manin's formula is quadratic in the $\chi$-terms, our formula \eqref{rec1} is linear. Moreover, in the r.h.s.\,\,of \eqref{rec1} the term $\chi(\overline{\eu M}_{0,n})$ never appears as it is multiplied by $\thi_2=0$: this is in contrast with Manin formula \eqref{rec2}.\qrem
\end{rem}

\subsection{Refinement of the Manin--Zagier asymptotic formula} In \cite{Man95,Man99}, Yu.\,I.~Manin also briefly addressed the growth of the
sequence $\bigl(\chi(\overline{\eu M}_{0,n+1})\bigr)_{n\geq 2}$.
In particular, as a consequence of the main result of \cite{Man95}, he mentions
the following asymptotic estimate, attributed\footnote{Manin writes: \guillemotleft Don Zagier has shown me how to derive from this the following
asymptotical formula [\dots]\guillemotright, see \cite[pag.\,403]{Man95}} to D.\,Zagier:
\[
\chi(\overline{\eu M}_{0,n+1})
\sim
\frac{1}{\sqrt n}
\left(\frac{n}{e^2-2e}\right)^{n-\frac12},
\qquad n\to\infty .
\]
As a final result, we refine this estimate by giving the complete asymptotic
expansion. To state it, we first introduce three real sequences
$(a_k)_{k\geq 0}$, $(\mu_k)_{k\geq 0}$, and $(\omega_k)_{k\geq 0}$, and a polynomial sequence $(C_k(\alpha))_{k\geq 0}$.

The first sequence $(a_k)_{k\geq 0}$ consists of the coefficients of the
Stirling asymptotic series
$n!\sim (n/e)^n\sqrt{2\pi n}\sum_{j\geq 0} a_j n^{-j}$.
They are explicitly given by
\beq
\label{stircoeffs}
a_j
=
\frac{1}{2^j j!}
\left[
\frac{d^{2j}}{dt^{2j}}
\left(
\frac{t^2}{2(e^t-1-t)}
\right)^{j+\frac12}
\right]_{t=0},
\qquad j\geq 0 .
\eeq
For alternative formulas, see \cite[pag.267]{Com74}\cite[Sec.\,2]{BC99}.

The second sequence $(\mu_k)_{k\geq 0}$ is defined by the standard local
expansion of the branch $W_{-1}$ of the Lambert $W$-function near its algebraic
branch point $x=-1/e$:
\beq\label{Wexpansion}
W_{-1}(x)
=
\sum_{\ell\geq 0}\mu_\ell p^\ell
=
-1+p-\frac13 p^2+\frac{11}{72}p^3+\cdots ,
\qquad
p=-\sqrt{2(ex+1)},\quad \operatorname{Im}(x)\geq 0 .
\eeq
Equivalently, the coefficients $\mu_k$ are recursively determined by
$\mu_0=-1$, $\mu_1=1$, and, for $k\geq 2$,
\[
\mu_k
=
\frac{k-1}{k+1}
\left(
\frac{\mu_{k-2}}{2}
+
\frac{\alpha_{k-2}}{4}
\right)
-
\frac{\alpha_k}{2}
-
\frac{\mu_{k-1}}{k+1},
\]
where $\alpha_0=2$, $\alpha_1=-1$, and
$\alpha_k=\sum_{j=2}^{k-1}\mu_j\mu_{k+1-j}$ for $k\geq 2$. For an overview of the properties, applications, and history of $W$, see \cite{CGHJK96}. Various series expansions for $W$, including \eqref{Wexpansion}, are discussed in \cite{CJK97}.

Set $\rho=e-2$ and $A=(2\rho/e)^{1/2}$. The sequence
$(\omega_k)_{k\geq 0}$ is defined by
\beq\label{omk1}
\omega_0=e,\qquad
\omega_1=-eA=-\sqrt{2e\rho},\qquad
\omega_2
=
\sum_{j=1}^{2}
\mu_j(-A)^j\omega_{2-j}
-\rho,
\eeq
and, for $\ell\geq 3$, by
\beq\label{omk2}
\omega_\ell
=
\sum_{j=1}^{\ell}
\mu_j(-A)^j\omega_{\ell-j}.
\eeq

Finally, 
for fixed complex parameter \(\alpha\), we define the polynomials
\(C_k(\alpha)\) by the asymptotic expansion
$$
\frac{\Gamma(n-\alpha)}{\Gamma(n+1)}
\sim
n^{-\alpha-1}
\sum_{k=0}^{\infty}\frac{C_k(\alpha)}{n^k},
\qquad n\to+\infty .
$$
Thus \(C_0(\alpha)=1\), and \(C_k(\alpha)\) is a polynomial in
\(\alpha\) of degree \(2k\) for \(k\geq 1\). The first terms are
$$
C_1(\alpha)=\frac{\alpha(\alpha+1)}{2},\qquad
C_2(\alpha)=
\frac{\alpha(\alpha+1)(3\alpha^2+7\alpha+2)}{24},\qquad
C_3(\alpha)=
\frac{\alpha^2(\alpha+1)^2(\alpha+2)(\alpha+3)}{48}.
$$
See Appendix \ref{appC} for closed and recursive formulas and further properties of the polynomials $C_k(\alpha)$.

The following theorem will be proved in Section \ref{secproofthmMZ+}.
\begin{thm}\label{thmMZ+}
As $n\to\infty$, one has
\[
\chi(\overline{\eu M}_{0,n+1})
\sim
\left(\frac{n}{e^2-2e}\right)^n
\sum_{m=1}^{\infty}\frac{b_m}{n^m},
\]
where, for $m\geq 1$,
\[b_m
=
\sqrt{2}
\sum_{\ell=0}^{m-1}\sum_{k=0}^{\ell}
\frac{(2k+2)!\,a_{m-1-\ell}\,
\omega_{2k+1}}
{(-4)^{k+1}(k+1)!}
C_{\ell-k}\left(k+\frac12\right).
\]
\end{thm}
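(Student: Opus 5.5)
The plan is singularity analysis of the exponential generating function of the Euler characteristics. I take $t=-1$ in the Getzler--Manin series (i.e.\ $t^2=1$). Let
\[
y(x)=x+\sum_{n\ge2}\chi(\overline{\eu M}_{0,n+1})\frac{x^n}{n!}.
\]
I expect this to satisfy the Manin functional equation
\[
2y-(1+y)\log(1+y)=x .
\]
A first check is that it reproduces $\chi(\overline{\eu M}_{0,3})=1$ at order $x^2$. Putting $u=1+y=e^{2+w}$ turns the equation into $we^{w}=-(x+2)/e^2$. Selecting the branch that matches $y(0)=0$ then gives the closed form
\[
y=-1-\frac{x+2}{W(-(x+2)/e^{2})} .
\]
This is the Lambert $W$ expression of Aluffi--Marcolli--Nascimento, and I take it as the starting point.

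\emph{Locating the dominant singularity.} The argument of $W$ reaches the branch point $-1/e$ exactly when $x=e-2=\rho$. There $y=e-1$, which is where the implicit function theorem fails because $d x/dy=1-\log(1+y)=0$. I will argue that $\rho$ is the unique singularity on $|x|=\rho$. This follows from positivity of the coefficients together with the fact that the equation $dx/dy=0$ has no other solutions on the circle. The function then continues analytically to a $\Delta$-domain at $\rho$.

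\emph{Local expansion at $\rho$.} Near $\rho$ the local variable in \eqref{Wexpansion} is
\[
p=-\sqrt{2(1-(x+2)/e)}=-A\sqrt{1-x/\rho}.
\]
The sign and branch are fixed by matching the $W_{-1}$ side approached from $\operatorname{Im}\ge0$, consistent with $y(x)$ being real and increasing for $0<x<\rho$. I write $y+1=e^{2}e^{W}$ and expand $-(x+2)/W$ in powers of $Z:=\sqrt{1-x/\rho}$. To do this I substitute $x+2=e-\rho Z^2$ and $W=\sum\mu_\ell(-AZ)^\ell$.

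Inverting the series $W$ term by term, the coefficients of $-(x+2)/W$ satisfy a convolution with $\mu_j(-A)^j$. This convolution is precisely the recursion \eqref{omk1}--\eqref{omk2}. Here $\omega_0=e$ comes from the constant term $e$ of $x+2$, and the correction $-\rho$ at order $2$ comes from the $-\rho Z^2$ term. Hence
\[
y+1=\sum_\ell \omega_\ell Z^\ell .
\]
Even powers of $Z$ are analytic at $\rho$ and contribute only exponentially smaller terms. The odd powers $Z^{2k+1}=(1-x/\rho)^{k+1/2}$ carry the whole asymptotics.

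\emph{Transfer and assembly.} By the Flajolet--Odlyzko transfer theorem,
\[
[x^n](1-x/\rho)^{k+1/2}=\rho^{-n}\frac{\Gamma(n-k-\tfrac12)}{\Gamma(-k-\tfrac12)\,\Gamma(n+1)},
\]
and the ratio $\Gamma(n-\alpha)/\Gamma(n+1)$ with $\alpha=k+\frac12$ expands as $n^{-k-3/2}\sum_j C_j(k+\tfrac12)n^{-j}$. The prefactor uses the reflection/duplication value
\[
1/\Gamma(-k-\tfrac12)=\frac{(-4)^{k+1}(k+1)!}{(2k+2)!\sqrt\pi}\cdot(\text{sign}),
\]
which yields the factor $(2k+2)!/((-4)^{k+1}(k+1)!)$ in $b_m$ up to bookkeeping.

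Multiplying by $n!\sim (n/e)^n\sqrt{2\pi n}\sum_j a_j n^{-j}$ gives
\[
\chi(\overline{\eu M}_{0,n+1})=n!\,[x^n]y\sim\Bigl(\frac{n}{e\rho}\Bigr)^n\sqrt2\sum\cdots .
\]
Here the $\sqrt\pi$ factors cancel and the powers $n^{1/2}\cdot n^{-k-3/2}$ combine to $n^{-k-1}$. Collecting terms with total power $n^{-m}$ gives a triple sum over $j=m-1-\ell$, $k$ and $\ell-k$, which is the stated $b_m$. Since $e\rho=e^2-2e$, this recovers the Manin--Zagier leading term.

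The main obstacle I expect is the careful determination of signs and branches. This means the choice $p=-A Z$ versus $+AZ$, and which side of the branch cut $W_{-1}$ is approached from, consistently with $y$ being real and increasing on $(0,\rho)$. It also means the exact sign conventions in $1/\Gamma(-k-\frac12)$, so that the $(-4)^{k+1}$ factor matches. I will fix both by comparing with the leading Manin--Zagier term $b_1$.
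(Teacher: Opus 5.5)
Your route is the paper's own: the Lambert-$W$ closed form, the local variable $p=-A\sqrt{1-x/\rho}$ at $\rho=e-2$ turning $W\cdot(y+1)=-(e-\rho Z^2)$ into the $\omega$-recursion, transfer of the half-integer powers, the $C_k$ expansion of $\Gamma(n-\alpha)/\Gamma(n+1)$, and Stirling. Your branch choice is already the correct one ($Z>0$, $p<0$, $W_{-1}\le -1$ on $(0,\rho)$), so nothing remains to be fixed there.

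\textbf{The Gamma value.} This is the one step that is actually wrong. From $\Gamma(\tfrac12-N)=\frac{(-4)^N N!}{(2N)!}\sqrt\pi$ with $N=k+1$ one gets $\Gamma(-k-\tfrac12)=\frac{(-4)^{k+1}(k+1)!}{(2k+2)!}\sqrt\pi$, hence
\[
\frac{1}{\Gamma(-k-\frac12)}=\frac{(2k+2)!}{(-4)^{k+1}(k+1)!\,\sqrt\pi}.
\]
This is exactly the factor appearing in $b_m$, with no residual sign. The expression you displayed has the rational factor inverted. It differs from the true value by $\bigl(4^{k+1}(k+1)!/(2k+2)!\bigr)^2$, which is already $4$ at $k=0$. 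So this is not a sign issue. Calibrating against $b_1$ only sees $k=0$ and could not repair the terms with $k\ge 1$. With the correct value, $b_1=\sqrt2\cdot\bigl(-\tfrac12\bigr)\omega_1=\sqrt{e\rho}$, which reproduces the Manin--Zagier leading term directly.

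\textbf{Uniqueness of the dominant singularity.} Positivity of the coefficients (Pringsheim) only tells you that the point of the circle of convergence on the positive axis is singular. What excludes other singularities on $|x|=\rho$, and gives the $\Delta$-continuation, is that the inverse of $y\mapsto 2y-(1+y)\log(1+y)$ has only two singular values. The first is the critical value $\rho$: $dx/dy=0$ forces $\log(1+y)=1$ on any sheet, hence $x=e-2$. The second is the logarithmic asymptotic value $x=-2$, coming from $y\to-1$, which lies outside the disc. This is what the paper's appeal to the two branch points of $W_{-1}$ amounts to.
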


In particular, the first terms are
\beq\label{newexpasym}
\chi(\overline{\eu M}_{0,n+1})
\sim
\left(\frac{n}{e\rho}\right)^n
\frac{\sqrt{e\rho}}{n}
\left(
c_0+\frac{c_1}{n}
+\frac{c_2}{n^2}
+\frac{c_3}{n^3}+\frac{c_4}{n^4}
+O\left(\frac{1}{n^5}\right)
\right),\qquad c_j=\frac{b_{j+1}}{\sqrt{e\rho}},\quad j\geq 0,
\eeq
where
\[
c_0=1,\qquad
c_1
=
\frac{6e-1}{12e},\qquad
c_2
=
\frac{84e^2-24e-23}{288e^2},\qquad
c_3
=
\frac{9720e^3-3654e^2-1602e-11237}{51840e^3},
\]\[
c_4=\frac{-2482411+672288 e-129168 e^2-143424 e^3+320976 e^4}{2488320 e^4}.
\]

\begin{example}
One has
\[\chi(\overline{\eu{M}}_{0,51})=740385733269564698527983077728073936234668970201726909380777306213753.\]The Manin--Zagier estimate gives
\[\frac{1}{\sqrt{50}}\left(\frac{50}{e^2-2e}\right)^{50-\frac{1}{2}}
\approx 7.33427\times 10^{68}.
\]
Taking the first five correction terms in the refined asymptotic expansion \eqref{newexpasym} gives
\[\frac{1}{\sqrt{50}}\left(\frac{50}{e^2-2e}\right)^{50-\frac{1}{2}}\left(1+\sum_{j=1}^4\frac{c_j}{50^j}\right)\approx 7.4038573310\times 10^{68},
\]with relative error $1.5\cdot 10^{-8}$. \qetr
\end{example}

\section{Proofs of the main results}\label{secproofs}

\subsection{The Getzler--Manin theorem}

Let
\beq\label{Fgen}
F(x,t)
=
x+\sum_{n=2}^{\infty}\frac{x^n}{n!}
{\sf P}_{\overline{\eu M}_{0,n+1}}(t)
\in \Q[t][\![x]\!]
\eeq
be the exponential generating function of the Poincar\'e polynomials of the
spaces \(\overline{\eu M}_{0,n+1}\).

\begin{thm}[Getzler--Manin {\cite{Get95,Man95}}]\label{GMthm}
The series \(F(x,t)\) is the unique solution \(y(x,t)\in \Q[t][\![x]\!]\) of
the Cauchy problem
\beq
\frac{\partial y}{\partial x}
=
\frac{1+y}{1+t^2x-t^2y},
\qquad
y(0,t)=0 .
\label{diffeqgen}
\eeq
Equivalently, \(F(x,t)\) is characterized, as a formal power series in \(x\)
with polynomial coefficients in \(t\), by
\beq
(1+y)^{t^2}
=
t^4y-t^2(t^2-1)x+1,
\qquad
y(0,t)=0 .
\label{funeqgen}
\eeq
\end{thm}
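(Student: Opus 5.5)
The plan is to take Getzler's and Manin's result as the cited input: the exponential generating series $F$ satisfies the ODE \eqref{diffeqgen}, which is Manin's Cauchy problem, deduced from the Keel--Manin quadratic recursion. What has to be proved here is twofold: the Cauchy problem has a unique formal solution, and that solution is characterized by the functional equation \eqref{funeqgen}.

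\emph{Uniqueness for the Cauchy problem.} Write $y=\sum_{n\ge1}y_n(t)x^n/n!$ with $y(0,t)=0$. The denominator $1+t^2x-t^2y$ is invertible in $\Q[t][\![x]\!]$ because its constant term is $1$. Comparing coefficients of $x^{n}/n!$ in $y'(1+t^2x-t^2y)=1+y$ gives $y_{n+1}$ as a polynomial expression in $y_1,\dots,y_n$. Hence the solution exists, is unique, and has coefficients in $\Q[t]$.

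\emph{Showing $F$ solves it.} Multiply out: $y'(1+t^2x-t^2y)=1+y$ is, coefficientwise, exactly the Keel--Manin recursion stated before Theorem~\ref{THMrecP}, once the unstable convention ${\sf P}_{\overline{\eu M}_{0,2}}=1$ is used. The product $y'y$ produces the binomial convolution $\sum\binom{n}{i}{\sf P}_{i+1}{\sf P}_{j+1}$, and $t^2xy'$ together with the linear terms accounts for the remaining pieces. I would verify this bookkeeping carefully, in particular the boundary terms $i=1$ and $j=1$ that are absorbed by the convention. This identification of the recursion with the ODE is the main obstacle, though it is only routine index matching.

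\emph{Equivalence with \eqref{funeqgen}.} Set $G(x,y)=(1+y)^{t^2}-t^4y+t^2(t^2-1)x-1$, with $(1+y)^{t^2}=\exp(t^2\log(1+y))$ understood in $\Q[t][\![y]\!]$.
\begin{itemize}
\item First show that $G(x,F)=0$. Differentiate along $y=F$ to get $\frac{d}{dx}G=\bigl(t^2(1+y)^{t^2-1}-t^4\bigr)y'+t^2(t^2-1)$. Substitute $y'=(1+y)/(1+t^2x-t^2y)$.
\item The relation $G=0$ gives $(1+y)^{t^2}=t^4y-t^2(t^2-1)x+1$, and this must be used to simplify. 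A cleaner route is to note that $H:=(1+y)^{t^2}-t^4y+t^2(t^2-1)x-1$ satisfies a linear ODE $H'=\frac{t^2}{1+t^2x-t^2y}H$ along any solution $y$ of \eqref{diffeqgen}. A direct computation should show that the inhomogeneous terms cancel, using $t^2(1+y)-t^4y+t^2(t^2-1)(1+t^2x-t^2y)\cdot\frac{1}{1}$ and so on.
\item Since $H(0)=0$, that linear ODE forces $H\equiv0$.
\end{itemize}

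\emph{Uniqueness for \eqref{funeqgen}.} Conversely, a series satisfying \eqref{funeqgen} is unique. Its coefficients are determined recursively: the coefficient of $x^n$ in $(1+y)^{t^2}-t^4y$ is $(t^2-t^4)y_n/n!$ plus terms involving lower $y_k$. Over $\Q(t)$ the factor $t^2-t^4$ is invertible, so $y$ is determined there. Any solution in $\Q[t][\![x]\!]$ therefore coincides with $F$, and the two characterizations are equivalent.
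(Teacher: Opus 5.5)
Your proposal is correct, but it reaches the functional equation by a different route from the paper. The paper cites the fact that $F$ satisfies these equations and proves only their equivalence. It gets \eqref{funeqgen}$\Rightarrow$\eqref{diffeqgen} by differentiating and eliminating $(1+y)^{t^2}$. It gets \eqref{diffeqgen}$\Rightarrow$\eqref{funeqgen} by passing to the compositional inverse $x(y)$, solving the resulting linear ODE with integrating factor $(1+y)^{-t^2}$, and integrating explicitly over $\Q(t)$.

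You differ in three places:
\begin{enumerate}
\item[(i)] You show that \eqref{diffeqgen} is coefficientwise the Keel--Manin recursion. The $i=1$ term of the convolution coming from $t^2yy'$ cancels the contribution of $-t^2xy'$, and $p_1=1$ plays the role of the unstable convention. This also gives existence and uniqueness in $\Q[t][\![x]\!]$.
\item[(ii)] You replace inversion and integration by a first-integral argument.
\item[(iii)] You replace \eqref{funeqgen}$\Rightarrow$\eqref{diffeqgen} by a triangular coefficient recursion with pivot $t^2-t^4$ over $\Q(t)$.
\end{enumerate}

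You only asserted step (ii), but it holds. Put $D=1+t^2x-t^2y$ and $y'=(1+y)/D$. Then $DH'=t^2(1+y)^{t^2}-t^4(1+y)+t^2(t^2-1)D$, and expanding gives $DH'-t^2H=0$ along any solution. Hence $H'=t^2D^{-1}H$, and $H(0)=0$ forces $H=0$. Drop your first bullet, which uses $G=0$ to simplify and so is circular as written; the $H$ version is the right one.

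Your route needs no compositional inverse and no formal integration of $(1-t^2\omega)(1+\omega)^{-1-t^2}$. It also keeps \eqref{diffeqgen}$\Rightarrow$\eqref{funeqgen} inside $\Q[t][\![x]\!]$ and ties the ODE directly to the classical recursion.

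The paper's route derives the functional equation rather than verifying a known answer. It produces $x=\frac{t^4y+1-(1+y)^{t^2}}{t^2(t^2-1)}$, which is Getzler's inverse series $G$ of \eqref{Ggenfunc}. That is exactly what feeds the Lagrange-inversion formula \eqref{magic2} used in the rest of the paper.
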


\begin{proof}[Equivalence of the two formulations]
We prove only the equivalence between \eqref{diffeqgen} and
\eqref{funeqgen}. The calculation is formal; one may work first over
\(\Q(t)\) and then observe that the resulting identities have coefficients in
\(\Q[t]\).

Assume first that \(y\) satisfies \eqref{funeqgen}. Differentiating with
respect to \(x\) gives
\[
t^2(1+y)^{t^2-1}\frac{\partial y}{\partial x}
=
t^4\frac{\partial y}{\partial x}
-
t^2(t^2-1).
\]
Using \eqref{funeqgen} to eliminate \((1+y)^{t^2}\), this rearranges to
\eqref{diffeqgen}.

Conversely, suppose that \(y\) satisfies \eqref{diffeqgen}. Since the right
hand side has constant term \(1\), we have \(y=x+O(x^2)\), hence \(y\) admits a
compositional inverse \(x=x(y,t)\). Rewriting \eqref{diffeqgen} in terms of
this inverse gives the linear differential equation
\[
\frac{\partial x}{\partial y}
-
\frac{t^2}{1+y}x
=
\frac{1-t^2y}{1+y}.
\]
Multiplication by the integrating factor
\(
\mu(y,t)=(1+y)^{-t^2}
\)
gives
\[
\frac{\partial}{\partial y}
\left((1+y)^{-t^2}x(y,t)\right)
=
\frac{1-t^2y}{(1+y)^{1+t^2}}.
\]
Using the initial condition \(x=0\) at \(y=0\), we obtain
\[
(1+y)^{-t^2}x
=
\int_0^y
\frac{1-t^2\omega}{(1+\omega)^{1+t^2}}
\,{\rm d}\omega .
\]
A direct integration yields
\[
x
=
\frac{t^4y+1-(1+y)^{t^2}}{t^2(t^2-1)},
\]which is equivalent to \eqref{funeqgen}.
\end{proof}

Manin proved \eqref{funeqgen} by interpreting the ordinary Poincar\'e
polynomials of the smooth projective varieties
\(\overline{\eu M}_{0,n+1}\) as virtual Poincar\'e polynomials. The latter are
compatible with cut-and-paste relations in the Grothendieck ring of varieties.
This makes it possible to express \(F(x,t)\) as a sum over stable trees; the
functional equation then follows from the corresponding combinatorial
``Feynman rules'' for such tree expansions. See also \cite[Ch.\,IV,\S 4]{Man99}.

Getzler obtained the same identities from a different, operadic viewpoint. In
particular, he proved the compositional inverse relations
\beq
\label{GM}
F(G(x,t),t)=G(F(x,t),t)=x,
\eeq
where
\[
G(x,t)
:=
x-\sum_{n=2}^{\infty}\frac{x^n}{n!}
\sum_{i=0}^{n-2}
(-1)^i t^{2(n-i-2)}
\dim_{\C}H_i(\eu M_{0,n+1},\C)
\]
is given explicitly\footnote{This is an immediate consequence of the classical formula
for the Poincar\'e polynomial of the open stratum,
$
\sum_{i=0}^{n-2}
\dim_{\C}H_i(\eu M_{0,n+1},\C)\,u^i
=
\prod_{k=2}^{n-1}(1+ku).
$} by
\beq
\label{Ggenfunc}
G(x,t)
=
\frac{1+t^4x-(1+x)^{t^2}}{t^4-t^2}.
\eeq 

\begin{cor}
If 
\beq\label{gfunc}
g(x,t):=\frac{G(x,t)}{x}=\frac{1+t^4x-(1+x)^{t^2}}{x(t^4-t^2)},
\eeq then
\beq\label{magic2}
{\sf P}_{\overline{\eu M}_{0,n+1}}(t)=
\left.\frac{\der^{n-1}}{x^{\,n-1}}\right|_{x=0}g(x,t)^{-n}.
\eeq
\end{cor}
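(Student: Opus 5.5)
The identity is a direct application of Lagrange inversion to the compositional inverse relation \eqref{GM}, so the plan is to check that its hypotheses hold in the ring \(\Q[t][\![x]\!]\) and then match normalizations.

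First, check that \(g(x,t)\) is a genuine formal power series in \(x\) with coefficients in \(\Q[t]\) and constant term \(1\). Expand \((1+x)^{t^2}=\sum_{k\ge0}\binom{t^2}{k}x^k\) in \eqref{Ggenfunc}. The constant terms cancel. The linear term gives \(t^4x-t^2x=x(t^4-t^2)\), so \(G(x,t)=x+O(x^2)\). For \(k\ge2\), the coefficient of \(x^k\) is \(-\binom{t^2}{k}/(t^4-t^2)\). This lies in \(\Q[t]\), because \(\binom{t^2}{k}=t^2(t^2-1)(t^2-2)\cdots(t^2-k+1)/k!\) is divisible by \(t^2(t^2-1)=t^4-t^2\); this also matches the explicit form of \(G\) stated before \eqref{Ggenfunc}. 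Hence \(g(x,t)=1+O(x)\in\Q[t][\![x]\!]\) is invertible, and every power \(g^{-n}\) lies in \(\Q[t][\![x]\!]\).

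Next, apply the Lagrange inversion formula over the coefficient ring \(\Q[t]\) (or over \(\Q(t)\), which changes nothing since all series involved have coefficients in \(\Q[t]\)). The relation \(G(x)=x\,g(x)\) with \(g(0)=1\), together with \(F(G(x))=x\) from \eqref{GM}, is exactly the setting \(F=x\,\phi(F)\) with \(\phi=1/g\). Lagrange inversion then gives, for \(n\ge1\),
\[
[x^n]\,F(x,t)=\frac1n\,[x^{n-1}]\,g(x,t)^{-n}=\frac{1}{n\,(n-1)!}\left.\frac{\partial^{n-1}}{\partial x^{n-1}}\right|_{x=0}g(x,t)^{-n}.
\]

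Finally, compare with \eqref{Fgen}, where \([x^n]F={\sf P}_{\overline{\eu M}_{0,n+1}}(t)/n!\) for \(n\ge2\). Multiplying by \(n!=n\,(n-1)!\) gives \eqref{magic2}. For \(n=1\), both sides equal \(1\), which is consistent with the convention \({\sf P}_{\overline{\eu M}_{0,2}}=1\).

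No step is a real obstacle. The only point needing care is the divisibility check that makes \(g\) a power series with polynomial coefficients and invertible constant term, since Lagrange inversion requires \(g(0)\) to be a unit of the coefficient ring.
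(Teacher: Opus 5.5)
Your proposal is correct and follows essentially the same route as the paper: Lagrange inversion applied to the compositional-inverse relation \eqref{GM}, giving $[x^n]F=\frac{1}{n}[x^{n-1}]g(x,t)^{-n}$, and then multiplying by $n!$. Your extra check that $G\in x+x^2\Q[t][\![x]\!]$, via the divisibility of $\binom{t^2}{k}$ by $t^4-t^2$ for $k\ge 2$, is a harmless refinement that the paper leaves implicit.
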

\proof
Consider the identity \eqref{GM}. Since $G(0,t)=0$ and $\der_x|_{0} G\neq 0$, by the Lagrange inversion formula, we have
\[\![x^{n}]F(x,t)=\frac{1}{n!}\left[\frac{d^{n-1}}{dw^{n-1}}\left(\frac{w}{G(w,t)}\right)^n\right]_{w=0}=\frac{1}{n}[w^{n-1}]\left(\frac{w}{G(w,t)}\right)^n.\tag*{\qed}
\]

\subsection{A simple proof of Aluffi--Marcolli--Nascimento formula}\label{AMNsec}
Setting for convenience $w=t^2$, and multiplying both sides of \eqref{magic2} by $(1-w)^{-n}$,
we have 
\begin{align*}
(1-w)^{-n}&{\sf P}_{\overline{\eu M}_{0,n+1}}(w^{1/2})
=\left.\frac{\der^{n-1}}{x^{\,n-1}}\right|_{x=0}\Bigl((1-w)g(x,w^{1/2})\Bigr)^{-n} \\
&=\left.\frac{\der^{n-1}}{x^{\,n-1}}\right|_{x=0}\left(\frac{-w^2x+(1+x)^w-1}{wx}\right)^{-n}
=\left.\frac{\der^{n-1}}{x^{\,n-1}}\right|_{x=0}\left(1-w+\sum_{k=2}^{\infty}\binom{w}{k}\frac{x^k}{wx}\right)^{-n}\\
&=\left.\frac{\der^{n-1}}{x^{\,n-1}}\right|_{x=0}\sum_{j\ge0}(-1)^j\binom{n+j-1}{j}
\left[-w+\sum_{k=2}^{\infty}\binom{w}{k}\frac{x^k}{wx}\right]^j\\
&=\left.\frac{\der^{n-1}}{x^{\,n-1}}\right|_{x=0}\sum_{j\ge0}\sum_{i=0}^j
(-1)^j\binom{n+j-1}{j}\binom{j}{i}(-w)^{j-i}
\left(\sum_{k=2}^{\infty}\binom{w}{k}\frac{x^k}{wx}\right)^i\\
&=\left.\frac{\der^{n-1}}{x^{\,n-1}}\right|_{x=0}\sum_{j\ge0}\sum_{i=0}^j\sum_{p_1,\dots,p_i=2}^{\infty}
(-1)^j\binom{n+j-1}{j}\binom{j}{i}(-w)^{j-i}
\left(\prod_{r=1}^i\binom{w}{p_r}\right)\frac{x^{p_1+\cdots+p_i}}{(wx)^i}\\
&=(n-1)!\sum_{j\geq 0}\sum_{i=0}^j\sum_{\substack{p_1,\dots,p_i\geq 2\\\sum_{r=1}^ip_r=n+i-1}}(-1)^{i}\binom{n+j-1}{j}\binom{j}{i}\left(\prod_{r=1}^i\binom{w}{p_r}\right)w^{j-2i}.
\end{align*}
\begin{lem}
For integers \(n\ge 1\), \(i\ge 1\), and an indeterminate \(w\), one has
\[
\sum_{\substack{p_1,\dots,p_i\ge 2\\ p_1+\cdots+p_i=n+i-1}}
\prod_{r=1}^i \binom{w}{p_r}
=
\sum_{\substack{a,b,c\ge 0\\ a+b+c=i}}
\binom{i}{a,b,c}
(-1)^b (-w)^c
\binom{wa}{n+i-1-c}.
\]
\end{lem}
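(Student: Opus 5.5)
The identity is a coefficient extraction: the left-hand side is $[x^{n+i-1}]\bigl(\sum_{p\ge2}\binom{w}{p}x^p\bigr)^i$, and I will compute that coefficient a second way. Write
\[
\sum_{p\ge 2}\binom{w}{p}x^p=(1+x)^w-1-wx .
\]
Then, as formal power series in $x$ with coefficients in $\Q[w]$,
\[
\sum_{\substack{p_1,\dots,p_i\ge 2\\ p_1+\cdots+p_i=n+i-1}}\prod_{r=1}^i\binom{w}{p_r}=[x^{n+i-1}]\bigl((1+x)^w-1-wx\bigr)^i .
\]

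Next I expand the $i$-th power with the trinomial theorem, treating the three summands $(1+x)^w$, $-1$ and $-wx$ as commuting elements of $\Q[w][\![x]\!]$:
\[
\bigl((1+x)^w-1-wx\bigr)^i=\sum_{a+b+c=i}\binom{i}{a,b,c}(1+x)^{wa}(-1)^b(-w)^c x^c .
\]
Here I use $((1+x)^w)^a=(1+x)^{wa}$. This holds because $(1+x)^w:=\exp(w\log(1+x))$ is a formal exponential, so powers multiply the exponent.

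Then I extract coefficients term by term. By the binomial series, $[x^{n+i-1}]\,x^c(1+x)^{wa}=\binom{wa}{n+i-1-c}$. This is valid because $n+i-1-c\ge n-1\ge0$ always, since $c\le i$; the case $a=0$ is also consistent, giving $\binom{0}{m}=\delta_{m,0}$. Summing over $a+b+c=i$ gives exactly the stated right-hand side.

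The only point needing care is justifying the identities in $\Q[w][\![x]\!]$ with $w$ an indeterminate, in particular $(1+x)^{w}(1+x)^{w'}=(1+x)^{w+w'}$. Two arguments work. One is the exponential–logarithm definition above. The other is to note that each $x$-coefficient is a polynomial identity in $w$ which holds for all positive integers $w$, where everything reduces to ordinary polynomial algebra, and hence holds identically. I do not expect a real obstacle; the lemma is essentially the trinomial theorem followed by the binomial series.
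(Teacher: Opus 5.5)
Your proposal is correct and follows the paper's proof: write the left-hand side as $[x^{n+i-1}]\bigl((1+x)^w-1-wx\bigr)^i$, expand with the multinomial theorem, and extract $[x^{n+i-1}]\,x^c(1+x)^{wa}=\binom{wa}{n+i-1-c}$. Your justification of $((1+x)^w)^a=(1+x)^{wa}$ in $\Q[w][\![x]\!]$ matches what the paper supplies in its appendix on generalized binomial series.
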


\begin{proof}
Consider the generating function
\(
\sum_{p\ge 2} \binom{w}{p} x^p
=
(1+x)^w-1-wx.
\)
Then
\[
\left((1+x)^w-1-wx\right)^i
=
\sum_{m\ge 0}
\left(
\sum_{\substack{p_1,\dots,p_i\ge 2\\ p_1+\cdots+p_i=m}}
\prod_{r=1}^i \binom{w}{p_r}
\right)x^m.
\]
Hence,
\[
\sum_{\substack{p_1,\dots,p_i\ge 2\\ p_1+\cdots+p_i=n+i-1}}
\prod_{r=1}^i \binom{w}{p_r}
=
[x^{n+i-1}]
\left((1+x)^w-1-wx\right)^i.
\]
Expanding multinomially,
\[
\left((1+x)^w-1-wx\right)^i
=
\sum_{\substack{a,b,c\ge 0\\ a+b+c=i}}
\binom{i}{a,b,c}
(1+x)^{wa}(-1)^b(-wx)^c.
\]
Extracting the coefficient of \(x^{n+i-1}\), we get
\(
[x^{n+i-1}](1+x)^{wa}x^c
=
\binom{wa}{n+i-1-c},
\)
and the claim follows. 
\end{proof}

We deduce
\begin{multline*}
(1-w)^{-n}{\sf P}_{\overline{\eu M}_{0,n+1}}(w^\frac{1}{2})=\sum_{j\geq 0}\sum_{i=0}^j\sum_{\substack{a,c \geq 0 \\ a+c \leq i}}(-1)^{a}\frac{(n+j-1)!}{(j-i)!\,a!\,c!\,(i-a-c)!}\binom{wa}{n+i-1-c}w^{j-2i+c}\\
=\sum_{j\geq 0}\sum_{i=0}^j\sum_{\substack{a,c \geq 0 \\ a+c \leq i}}
\sum_{r=0}^{n+i-1-c}
(-1)^{a}
\frac{(n+j-1)!\,s(n+i-1-c,r)\,
a^r}{(j-i)!\,a!\,c!\,(i-a-c)!\,(n+i-1-c)!}
\, 
w^{j-2i+c+r}\\
=\sum_{j\geq 0}\sum_{i=0}^j\sum_{c=0}^i
\sum_{r=0}^{n+i-1-c}
(-1)^{i-c}
\frac{(n+j-1)!}{(j-i)!\,c!\,(n+i-1-c)!}
\, s(n+i-1-c,r)\, S(r,i-c)\,
w^{j-2i+c+r},
\end{multline*}
where in the last equality we have invoked the formula
\(S(n,k)=\frac{(-1)^k}{k!}\sum_{j=0}^k(-1)^j\binom{k}{j}j^n
\), see \cite[Ch.\,V, Thm.\,A, pag.\,204]{Com74}.
By setting \( k = i - c \), so that \( c = i-k \) and \(0 \le k \le i\), we have
\begin{multline*}
(1-w)^{-n}{\sf P}_{\overline{\eu M}_{0,n+1}}(w^\frac{1}{2})=\\
=\sum_{j\ge0} \sum_{i=0}^j \sum_{k=0}^i \sum_{r=0}^{n+k-1}
(-1)^k
\frac{(n+j-1)!\, s(n+k-1,r)\, S(r,k)}{(j-i)!\,(i-k)!\,(n+k-1)!}
\, 
w^{j - i - k + r}\\
=\sum_{j\ge0} (n+j-1)!
\sum_{k\ge0} \sum_{r=0}^{n+k-1}
\frac{(-1)^k s(n+k-1,r) S(r,k)}{(n+k-1)!}
\sum_{i=k}^j \frac{1}{(j-i)!(i-k)!}
w^{j - i - k + r}\\
\end{multline*}
\begin{multline*}
=\sum_{j\ge0} (n+j-1)!
\sum_{k\ge0} \sum_{r=0}^{n+k-1}
\frac{(-1)^k s(n+k-1,r) S(r,k)}{(n+k-1)!}
\frac{(1+w)^{j-k}}{(j-k)!}
\, w^{r-k}\\
=\sum_{j\ge k} \frac{(n+j-1)!}{(j-k)!} (1+w)^j
\sum_{k\ge0} \sum_{r=0}^{n+k-1}
\frac{(-1)^k s(n+k-1,r) S(r,k)}{(n+k-1)!}
(1+w)^{-k}
\, w^{r-k}\\
=(1+w)^k \sum_{t\ge0} \frac{(n+k+t-1)!}{t!} (1+w)^t\sum_{k\ge0} \sum_{r=0}^{n+k-1}
\frac{(-1)^k s(n+k-1,r) S(r,k)}{(n+k-1)!}
(1+w)^{-k}
\, w^{r-k}.
\end{multline*}
Using the identity
\(
\sum_{t\ge0} \frac{(A+t)!}{t!} x^t
= A!\,\frac{1}{(1-x)^{A+1}},
\)
with $A = n+k-1$ and $x = 1+w$, we get
\begin{multline*}
(1-w)^{-n}{\sf P}_{\overline{\eu M}_{0,n+1}}(w^\frac{1}{2})=(n+k-1)! (1+w)^k \frac{1}{(1-(1+w))^{n+k}}\\
\times\sum_{k\ge0} \sum_{r=0}^{n+k-1}
\frac{(-1)^k s(n+k-1,r) S(r,k)}{(n+k-1)!}
(1+w)^{-k}
\, w^{r-k}\\
=\sum_{k\ge0} \sum_{r=0}^{n+k-1}
(-1)^k s(n+k-1,r) S(r,k)
\frac{w^{r-k}}{(-w)^{n+k}}\\
=(-1)^n
\sum_{k\ge0} \sum_{r=0}^{n+k-1}
s(n+k-1,r)\, S(r,k)\,
w^{r-n-2k}.
\end{multline*}
This proves Theorem \ref{AMNthm}.

\subsection{Proof of Theorem \ref{mainthm1}}\label{secproof1} 
The following lemma relates the function $g(x,t)$ defined in equation \eqref{gfunc} with the $\pi_k$ polynomials \eqref{pik}.
\begin{lem}
We have
\[g(x,t)=
1-\sum_{k=1}^\infty\frac{\pi_k(t)}{k!}x^k.
\]
\end{lem}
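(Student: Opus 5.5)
The plan is to expand the numerator of $g$ using the binomial series in $x$ and read off coefficients. Write $w=t^2$. By the binomial series,
\[
(1+x)^{w}=\sum_{m\ge 0}\binom{w}{m}x^m=1+wx+\sum_{m\ge 2}\binom{w}{m}x^m .
\]
Substituting into \eqref{gfunc} gives
\[
1+w^2x-(1+x)^{w}=(w^2-w)x-\sum_{m\ge 2}\binom{w}{m}x^m .
\]
Dividing by $x(w^2-w)$ then yields
\[
g(x,t)=1-\sum_{m\ge 2}\frac{1}{w(w-1)}\binom{w}{m}x^{m-1}.
\]
Setting $k=m-1\ge 1$, it remains to show that
\[
\frac{1}{w(w-1)}\binom{w}{k+1}=\frac{\pi_k(t)}{k!}.
\]

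To verify this, write the binomial coefficient as a falling factorial:
\[
\binom{w}{k+1}=\frac{w(w-1)(w-2)\cdots(w-k)}{(k+1)!}.
\]
Dividing by $w(w-1)$ leaves
\[
\frac{(w-2)\cdots(w-k)}{(k+1)!}=\frac{1}{k!}\cdot\frac{(t^2-2)\cdots(t^2-k)}{k+1}=\frac{\pi_k(t)}{k!},
\]
by the second expression in \eqref{pik}, with the empty product equal to $1$ when $k=1$. This gives $\pi_1=1/2$, which matches ${\sf P}_{\overline{\eu M}_{0,3}}=2\pi_1=1$. The first expression in \eqref{pik} agrees with the second, since $(k-1)!\binom{t^2-2}{k-1}=(t^2-2)^{\underline{k-1}}$.

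The one point that needs care is the legitimacy of the division by $w^2-w$. Working over $\Q(t)$, as in the proof of Theorem~\ref{GMthm}, the division is harmless. After cancellation every coefficient is a polynomial in $t$, so the identity holds in $\Q[t][\![x]\!]$. This is the only (minor) obstacle; the rest is immediate.
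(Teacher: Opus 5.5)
Your proof is correct and is essentially the paper's argument: expand $(1+x)^{w}$ by the binomial series, divide by $x(w^2-w)$, and identify $\frac{1}{w(w-1)}\binom{w}{k+1}=\frac12\binom{w}{2}^{-1}\binom{w}{k+1}=\frac{(k-1)!}{(k+1)!}\binom{w-2}{k-1}=\pi_k(t)/k!$ after setting $w=t^2$. The division you were cautious about is in fact legitimate directly in $\Q[t][\![x]\!]$, since $w(w-1)$ divides $\binom{w}{m}$ in $\Q[w]$ for every $m\ge 2$.
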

\proof
A simple computation shows that 
\beq\label{magic1}
\frac{1+w^2x-(1+x)^{w}}{x(w^2-w)}=1-\frac{1}{2}\sum_{k=1}^\infty\binom{w}{2}^{-1}\binom{w}{k+1}x^k=1-\sum_{k=1}^\infty\frac{(k-1)!}{(k+1)!}\binom{w-2}{k-1}x^k.
\eeq
This follows from the binomial series expansion $(1+x)^w=\sum_{i=0}^\infty\binom{w}{i}x^i$ (see Appendix \ref{appB}). The substitution $w=t^2$ implies the result.
\endproof

By formulas \eqref{magic2} and \eqref{coefformexp}, we have
\begin{multline*}
{\sf P}_{\overline{\eu M}_{0,n+1}}(t)=(n-1)!\sum_{m=1}^{n-1}\binom{-n}{m}\sum_{\substack{i_1+\cdots+i_m=n\\ i_j\ge 1}}\prod_{j=1}^m\left(-\frac{\pi_{i_j}(t)}{i_j!}\right)\\
=(n-1)!\sum_{m=1}^{n-1}\cancel{(-1)^m}\binom{n+m-1}{m}\sum_{\substack{i_1+\cdots+i_m=n\\ i_j\ge 1}}\cancel{(-1)^m}\prod_{j=1}^m\left(\frac{\pi_{i_j}(t)}{i_j!}\right)
\end{multline*}
\[
=\sum_{m=1}^{n-1}\frac{(n+m-1)!}{(n-1)!}B_{n-1,m}(\pi_1(t),\pi_2(t),\dots)
\]
where the last equality follows from Lemma \ref{lemmBell}. This proves the theorem.

\subsection{Proof of Theorem \ref{mainthm2}}\label{proofbetti1} Using the standard expansion of the (exponential) partial Bell polynomials,
\[
B_{N,M}(x_1,x_2,\dots)=
\sum_{\substack{j_1,j_2,\dots\ge 0\\
\sum_{k\ge 1} j_k=M\\
\sum_{k\ge 1} k\,j_k=N}}
\frac{N!}{\prod_{k\ge 1} j_k!\,(k!)^{j_k}}
\prod_{k\ge 1} x_k^{j_k},
\]
and substituting $N=n-1$ and $x_k=\pi_k(t)$, one obtains the explicit finite sum
\begin{equation}\label{eq:P_jk}
{\sf P}_{\overline{\eu M}_{0,n+1}}(t)
=
\sum_{\substack{j_1,\dots,j_{n-1}\ge 0\\ \sum_{k=1}^{n-1} k\,j_k=n-1}}
\frac{\bigl(n+\sum_{k=1}^{n-1} j_k-1\bigr)!}{\prod_{k=1}^{n-1} j_k!\,(k!)^{j_k}}
\;\prod_{k=1}^{n-1}\pi_k(t)^{j_k}.
\end{equation}
Expanding the product in \eqref{eq:P_jk} using multinomial coefficients yields
\[
\Bigl[t^{2r}\Bigr]\prod_{k=1}^{n-1}\Bigl(\sum_{a=0}^{k-1} \bt_{k,a}t^{2a}\Bigr)^{j_k}
=
\sum_{\substack{r_{k,a}\ge 0\\
\sum_{a=0}^{k-1} r_{k,a}=j_k\;\forall k\\
\sum_{k=1}^{n-1}\sum_{a=0}^{k-1} a\,r_{k,a}=r}}
\prod_{k=1}^{n-1}
\frac{j_k!}{\prod_{a=0}^{k-1} r_{k,a}!}
\prod_{a=0}^{k-1} \bt_{k,a}^{\,r_{k,a}}.
\]
Plugging this into \eqref{eq:P_jk} (and canceling the factors $j_k!$) gives the fully explicit finite-sum expression
\begin{multline*}
b_{2r}(\overline{\eu M}_{0,n+1})=\Bigl[t^{2r}\Bigr]{\sf P}_{\overline{\eu M}_{0,n+1}}(t)\\
=
\sum_{\substack{j_1,\dots,j_{n-1}\ge 0\\ \sum_{k=1}^{n-1} k\,j_k=n-1}}
\frac{\bigl(n+\sum_{k=1}^{n-1} j_k-1\bigr)!}{\prod_{k=1}^{n-1}(k!)^{j_k}}
\sum_{\substack{r_{k,a}\ge 0\\
\sum_{a=0}^{k-1} r_{k,a}=j_k\;\forall k\\
\sum_{k=1}^{n-1}\sum_{a=0}^{k-1} a\,r_{k,a}=r}}
\prod_{k=1}^{n-1}\prod_{a=0}^{k-1}\frac{\bt_{k,a}^{\,r_{k,a}}}{r_{k,a}!}.
\end{multline*}

\subsection{Proof of Theorem \ref{betti2}}\label{proofbetti2}
Recall the function 
\[g(x,t)=\frac{1+t^4x-(1+x)^{t^2}}{x(t^4-t^2)},
\]introduced in equation \eqref{gfunc}.
\begin{lem}
We have
\[g(z,w^\frac{1}{2})=1-\frac{1}{z}\int_0^z\int_0^u(1+v)^{w-2}dvdu.
\]
\end{lem}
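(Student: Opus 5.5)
We prove the identity by evaluating the double integral explicitly and comparing it with the closed form \eqref{gfunc} of $g$, taken with $t=w^{1/2}$, so that $t^2=w$ and $t^4=w^2$. All computations take place in $\Q(w)[\![z]\!]$, where $(1+v)^{w-2}$ means the binomial series $\sum_{i\ge0}\binom{w-2}{i}v^i$. Integration is term by term: $\int_0^z h(u)\,du$ is the unique series with zero constant term whose derivative is $h$. With this convention the usual antiderivative rules for $(1+v)^{\alpha}$ hold formally. Indeed, $\frac{d}{dv}(1+v)^{\alpha+1}=(\alpha+1)(1+v)^{\alpha}$ follows coefficientwise from the identity $(i+1)\binom{\alpha+1}{i+1}=(\alpha+1)\binom{\alpha}{i}$.

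First compute the inner integral:
\[
\int_0^u(1+v)^{w-2}\,dv=\frac{(1+u)^{w-1}-1}{w-1}.
\]
Integrating once more gives
\[
\int_0^z\frac{(1+u)^{w-1}-1}{w-1}\,du=\frac{1}{w-1}\left(\frac{(1+z)^{w}-1}{w}-z\right)=\frac{(1+z)^w-1-wz}{w(w-1)}.
\]
Dividing by $z$ and subtracting from $1$ yields
\[
1-\frac{(1+z)^w-1-wz}{z\,(w^2-w)}=\frac{z(w^2-w)-(1+z)^w+1+wz}{z(w^2-w)}=\frac{1+w^2z-(1+z)^w}{z(w^2-w)}.
\]
This is exactly $g(z,w^{1/2})$ by \eqref{gfunc}.

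A second route is to check coefficients directly against \eqref{magic1}. Integrating $\sum_i\binom{w-2}{i}v^i$ twice gives $\sum_i\binom{w-2}{i}\frac{z^{i+2}}{(i+1)(i+2)}$. Dividing by $z$ and setting $k=i+1$ produces $\sum_{k\ge1}\frac{(k-1)!}{(k+1)!}\binom{w-2}{k-1}z^k$, which is the series appearing in \eqref{magic1}. This confirms the identity without dividing by $w$ or $w-1$.

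The only delicate point is that the first argument divides by $w-1$ and $w$, which is harmless over $\Q(w)$. Since both sides in fact have coefficients in $\Q[w]$, as the coefficient check shows, the identity holds as an identity in $\Q[w][\![z]\!]$. Nothing else presents an obstacle.
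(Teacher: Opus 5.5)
Your proof is correct.

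Your second route is exactly the paper's argument. The paper rewrites \eqref{magic1} as $1-\frac{1}{z}\sum_{k\ge1}\frac{1}{k(k+1)}\binom{w-2}{k-1}z^{k+1}$ and identifies the series with the term-by-term double integral of the binomial series $(1+v)^{w-2}$. The paper's display actually prints $z^{k-1}$ there, which is a typo; your computation has the correct exponent.

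Your first route is a genuinely different verification. Instead of comparing coefficients with \eqref{magic1}, you integrate in closed form, $(1+v)^{w-2}\mapsto\frac{(1+u)^{w-1}-1}{w-1}\mapsto\frac{(1+z)^w-1-wz}{w(w-1)}$, and match the result directly against the definition \eqref{gfunc}. This bypasses \eqref{magic1} entirely and makes the lemma an immediate consequence of the definition of $g$. The cost is that you work over $\Q(w)$, dividing by $w$ and $w-1$. The paper's coefficient route gives polynomiality in $w$ for free. As you note, combining the two routes recovers the identity in $\Q[w][\![z]\!]$.
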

\proof
Write equation \eqref{magic1} as follows
\[\frac{1+w^2z-(1+z)^{w}}{z(w^2-w)}=1-\sum_{k=1}^\infty\frac{1}{k(k+1)}\binom{w-2}{k-1}z^k=1-\frac{1}{z}\sum_{k=1}^\infty\frac{1}{k(k+1)}\binom{w-2}{k-1}z^{k-1}.
\]The series in the last summand equals the double integral above.
\endproof

From equation \eqref{magic2}, for all $\ell\geq 0$ we have
\begin{multline*}
b_{2\ell}(\overline{\eu M}_{0,n+1})=[w^\ell]{\sf P}_{\overline{\eu M}_{0,n+1}}(w^\frac{1}{2})=\frac{1}{\ell!}\left.\frac{\der^{n+\ell-1}}{\der w^\ell\der z^{n-1}}\right|_{(w,z)=(0,0)}\left(1-\frac{1}{z}\int_0^z\int_0^u(1+v)^{w-2}dvdu\right)^{-n}\\
=\frac{1}{\ell!}\left.\frac{\der^{n+\ell-1}}{\der w^\ell\der z^{n-1}}\right|_{(w,z)=(0,0)}\sum_{k\geq 0}\binom{n+k-1}{k}\left(\frac{1}{z}\int_0^z\int_0^u(1+v)^{w-2}dvdu\right)^k.
\end{multline*}

\begin{lem}
We have
\[\frac{1}{\ell!}\left.\frac{\der}{\der w^\ell}\right|_{w=0}\left(\frac{1}{z}\int_0^z\int_0^u(1+v)^{w-2}dvdu\right)=1-\frac{1}{z}\sum_{i=1}^{\ell+1}\frac{\log(1+z)^{i}}{i!},\quad \ell\geq 0.
\]
\end{lem}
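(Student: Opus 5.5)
The plan is to expand the integrand in powers of $w$ and then identify the resulting double integral by checking that the claimed right-hand side solves the same second-order initial value problem in $z$.

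First, write
\[
(1+v)^{w-2}=(1+v)^{-2}e^{w\log(1+v)}=(1+v)^{-2}\sum_{\ell\ge0}\frac{w^\ell\log(1+v)^\ell}{\ell!}.
\]
Everything here is a formal power series in $(v,w)$, so extracting the coefficient of $w^\ell$, which is what $\frac1{\ell!}\der_w^\ell|_{w=0}$ does, commutes with the two integrations. The statement therefore reduces to the single-variable identity
\[
I_\ell(z):=\int_0^z\int_0^u\frac{\log(1+v)^\ell}{\ell!\,(1+v)^2}\,dv\,du
= z-\sum_{i=1}^{\ell+1}\frac{\log(1+z)^i}{i!}.
\]
Dividing by $z$ then gives exactly the claimed formula $1-\frac1z\sum_{i=1}^{\ell+1}\frac{\log(1+z)^i}{i!}$.

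To prove the reduced identity, I would not integrate by parts repeatedly. Instead, set $R_\ell(z):=z-\sum_{i=1}^{\ell+1}L^i/i!$ with $L=\log(1+z)$, and verify that $R_\ell$ satisfies the same initial value problem as $I_\ell$:
\[
R_\ell''=\frac{L^\ell}{\ell!(1+z)^2},\qquad R_\ell(0)=R_\ell'(0)=0.
\]
Since $L'=1/(1+z)$, one gets $R_\ell'(z)=1-\frac1{1+z}\sum_{j=0}^{\ell}\frac{L^j}{j!}$, which vanishes at $z=0$; clearly $R_\ell(0)=0$ as well. Differentiating once more gives
\[
R_\ell''=\frac{1}{(1+z)^2}\Bigl(\sum_{j=0}^{\ell}\frac{L^j}{j!}-\sum_{j=1}^{\ell}\frac{L^{j-1}}{(j-1)!}\Bigr)=\frac{L^\ell}{\ell!\,(1+z)^2},
\]
because the two sums telescope and leave only the top term. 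Uniqueness of the formal solution of $y''=f$ with $y(0)=y'(0)=0$ then yields $I_\ell=R_\ell$.

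The only point needing care is the interchange of coefficient extraction with the integrals and with division by $z$. This is harmless at the formal level: $I_\ell(z)=O(z^2)$, so $I_\ell/z$ is a genuine power series. As a sanity check, the case $\ell=0$ gives $1-\log(1+z)/z$, which agrees with the direct computation of $\frac1z\int_0^z\bigl(1-\frac1{1+u}\bigr)du$.
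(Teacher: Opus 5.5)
Your proof is correct and follows the same route as the paper, whose entire argument is that the $w$-differentiation at $w=0$ can be interchanged with the two integrations. You also supply the step the paper leaves implicit, the evaluation $\int_0^z\int_0^u \frac{\log(1+v)^\ell}{\ell!\,(1+v)^2}\,dv\,du = z-\sum_{i=1}^{\ell+1}\frac{\log(1+z)^i}{i!}$, and your telescoping check that $R_\ell''=\frac{\log(1+z)^\ell}{\ell!\,(1+z)^2}$ with $R_\ell(0)=R_\ell'(0)=0$ establishes it cleanly.
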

\proof
The result follows by exchanging the order of differentiation and integration.
\endproof

If we set
\[\phi_\ell(z)=1-\frac{1}{z}\sum_{i=1}^{\ell+1}\frac{\log(1+z)^{i}}{i!},\quad \ell\geq 0, 
\]by applying the Fa\`a di Bruno and Leibnitz formulas we obtain
\begin{multline*}
b_{2\ell}(\overline{\eu M}_{0,n+1})=\frac{1}{\ell!}\left.\frac{\der^{n+\ell-1}}{\der w^\ell\der z^{n-1}}\right|_{(w,z)=(0,0)}\sum_{k\geq 0}\binom{n+k-1}{k}\left(\frac{1}{z}\int_0^z\int_0^u(1+v)^{w-2}dvdu\right)^k\\
=\left.\frac{\der^{n-1}}{\der z^{n-1}}\right|_{z=0}\sum_{m=0}^\ell\sum_{k\geq m}\binom{n+k-1}{k}\frac{k^{\underline{m}}}{\ell!}\phi_0(z)^{k-m}B_{\ell,m}\left(1!\phi_1(z),2!\phi_2(z)\dots\right)\\
=\sum_{m=0}^\ell\sum_{k\geq m}\sum_{i+j=n-1}\binom{n+k-1}{k}\frac{k^{\underline{m}}}{\ell!}\binom{n-1}{i}\left.\frac{\der^{i}\phi_0(z)^{k-m}}{\der z^{i}}\right|_{z=0}\left.\frac{\der^{j}}{\der z^{j}}B_{\ell,m}\left(1!\phi_1(z),2!\phi_2(z)\dots\right)\right|_{z=0}.
\end{multline*}

For any $a,b,c\in\Z_{\geq 0}$, set
\begin{multline*}
\Om(a,b,c):=\\=\sum_{\substack{i_1+\dots+i_{c+2}=a\\ i_1,\dots,i_{c+2}\ge 0}}(-1)^{-\sum_{r=1}^{c+1}i_r}\binom{a}{i_1,\dots,i_{c+2}}
\frac{1}{\prod_{j=1}^{c+1}j!^{i_j}}\frac{\left(\sum_{j=1}^{c+1}ji_j\right)!\, b!}{(\sum_{r=1}^{c+1}i_r+b)!}\, s\left(\sum_{r=1}^{c+1}i_r+b,\, \sum_{j=1}^{c+1}ji_j\right).
\end{multline*}In particular, we have
\[\Om(a,b,0)=\sum_{p=0}^{a}
(-1)^p \binom{a}{p}
\binom{p+b}{p}^{-1} s(p+b,p),\qquad a,b\in\Z_{\geq 0}.
\]

\begin{lem}\label{lemphi}
For all $a,b\in\Z_{\geq 0}$, we have
\[\left.\frac{\der^{b}\phi_c(z)^{a}}{\der z^{b}}\right|_{z=0}=\Om(a,b,c).
\]
\end{lem}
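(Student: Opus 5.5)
The plan is to expand \(\phi_c(z)^a\) multinomially and then use the classical generating function of the signed Stirling numbers of the first kind,
\[
\frac{\log(1+z)^N}{N!}=\sum_{k\ge N}s(k,N)\frac{z^k}{k!},
\]
which is the exponential form of the identity \(x^{\underline k}=\sum_N s(k,N)x^N\) recalled in the Notations. Write \(L:=\log(1+z)\), so that
\[
\phi_c(z)=1+\sum_{j=1}^{c+1}\left(-\frac{L^j}{j!\,z}\right).
\]
Note that each summand \(L^j/z\) is a genuine power series, since \(L=z+O(z^2)\).

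\textbf{Step 1: multinomial expansion.} Regard \(\phi_c\) as a sum of \(c+2\) terms: the \(c+1\) terms \(-L^j/(j!z)\), with multiplicities \(i_1,\dots,i_{c+1}\), and the constant \(1\), with multiplicity \(i_{c+2}\). The multinomial theorem then gives
\[
\phi_c(z)^a=\sum_{i_1+\dots+i_{c+2}=a}\binom{a}{i_1,\dots,i_{c+2}}(-1)^{M}\frac{1}{\prod_{j=1}^{c+1}j!^{\,i_j}}\,\frac{L^{N}}{z^{M}},
\]
where \(M=\sum_{r=1}^{c+1}i_r\) and \(N=\sum_{j=1}^{c+1}j\,i_j\). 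Since \(j\ge1\), we have \(N\ge M\), so \(L^N/z^M\) is a formal power series and no negative powers of \(z\) appear. The sign \((-1)^{-M}\) in the definition of \(\Om\) coincides with \((-1)^M\).

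\textbf{Step 2: Stirling expansion.} Substituting the Stirling generating function gives
\[
\frac{L^N}{z^M}=N!\sum_{k\ge N}s(k,N)\frac{z^{k-M}}{k!}.
\]
Hence the coefficient of \(z^b\) is \(N!\,s(b+M,N)/(b+M)!\). This coefficient vanishes automatically when \(b+M<N\), by the convention that Stirling numbers vanish outside their natural range. Taking the \(b\)-th derivative at \(z=0\) multiplies this coefficient by \(b!\), producing the factor
\[
\frac{N!\,b!}{(M+b)!}\,s(M+b,N).
\]
Summing over \((i_1,\dots,i_{c+2})\) yields exactly \(\Om(a,b,c)\).

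\textbf{Specialization \(c=0\).} Here the summation indices reduce to \(i_1=p\) and \(i_2=a-p\), so \(N=M=p\). The general formula then reduces to the displayed expression for \(\Om(a,b,0)\), using \(\frac{p!\,b!}{(p+b)!}=\binom{p+b}{p}^{-1}\).

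The only delicate point is bookkeeping rather than substance. One must check that dividing by \(z^M\) is legitimate (guaranteed by \(N\ge M\)). One must also distinguish the derivative from the coefficient, which is the source of the factor \(b!\). Everything is a finite sum in the ring of formal power series, so no convergence issues arise.
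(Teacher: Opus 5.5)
Your proof is correct and follows the paper's argument essentially verbatim: you expand $\phi_c(z)^a$ multinomially, then use $\log(1+z)^N/N!=\sum_{k\ge N}s(k,N)z^k/k!$ to evaluate the $b$-th derivative of $\log(1+z)^N/z^M$ at $z=0$ as $\frac{N!\,b!}{(M+b)!}\,s(M+b,N)$. Your observation that $N\ge M$ makes $\log(1+z)^N/z^M$ a genuine power series is a point the paper leaves implicit.
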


\proof
By the multinomial expansion, we have
\[\phi_c(z)^a=\sum_{i_1+\dots+i_{c+2}=a}\binom{a}{i_1,\dots,i_{c+2}}(-z)^{-\sum_{r=1}^{c+1}i_r}\frac{\log(1+z)^{\sum_{j=1}^{c+1}ji_j}}{\prod_{j=1}^{c+1}j!^{i_j}}.
\]
From the formula
\[\frac{\log(1+z)^k}{k!}=\sum_{n=k}^\infty s(n,k)\frac{z^n}{n!},
\]
we deduce
\[\frac{\der^b}{\der z^b}\left(\frac{\log(1+z)^k}{z^\Dl}\right)=\sum_{n=k}\frac{k!}{n!}s(n,k)(n-\Dl)^{\underline{b}}z^{n-\Dl-b},
\]so that
\[\left.\frac{\der^b}{\der z^b}\left(\frac{\log(1+z)^k}{z^\Dl}\right)\right|_{z=0}=\frac{k!\,b!}{(\Dl+b)!}s(\Dl+b,k).
\]The result follows.
\endproof

\begin{rem}
It is easy to verify that $\Om(a,b,0)=0$ whenever $a>b$. More generally, by Lemma \ref{lemphi}, one can deduce that for all $a,c\in\N$,
\[
\Om(a+A,a+B,c)=0 \quad \text{for any } A>0 \text{ and } B=1,\dots,c.\tag*{\qrem}
\]
\end{rem}

For all $\ell,m,j\in \Z_{\geq 0}$, set
\[\Om^{[1]}(\ell,m,j):=\frac{\ell!}{m!}\sum_{\substack{\sum_{l=1}^{\ell-m+1}h_l=m\\\sum_{l=1}^{\ell-m+1}lh_l=\ell}}\,\sum_{\sum_{l=1}^{\ell-m+1}\si_l=j}\binom{m}{h_1,h_2,\dots}\binom{j}{\si_1,\si_2,\dots}\prod_{p=1}^{\ell-m+1}\Om(h_p,\si_p,p).
\]
\begin{lem}
We have
\[\left.\frac{\der^{j}}{\der z^{j}}B_{\ell,m}\left(1!\phi_1(z),2!\phi_2(z)\dots\right)\right|_{z=0}=\Om^{[1]}(\ell,m,j).
\]
\end{lem}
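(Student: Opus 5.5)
We expand the partial Bell polynomial through its monomial formula and differentiate each monomial with the general Leibniz rule; Lemma~\ref{lemphi} then evaluates every factor at $z=0$. The monomial formula is the one already used in Section~\ref{proofbetti1}. With $x_p=p!\,\phi_p(z)$ it reads
\[
B_{\ell,m}\bigl(1!\phi_1(z),2!\phi_2(z),\dots\bigr)=\sum_{\substack{h_1,h_2,\dots\ge 0\\ \sum_p h_p=m,\ \sum_p p\,h_p=\ell}}\frac{\ell!}{\prod_p h_p!\,(p!)^{h_p}}\prod_p \bigl(p!\,\phi_p(z)\bigr)^{h_p}.
\]
The factors $(p!)^{h_p}$ cancel, which leaves the coefficient $\frac{\ell!}{\prod_p h_p!}=\frac{\ell!}{m!}\binom{m}{h_1,h_2,\dots}$. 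This is exactly the prefactor appearing in the definition of $\Om^{[1]}(\ell,m,j)$.

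Next we bound the range of $p$. The constraints $\sum_p h_p=m$ and $\sum_p p\,h_p=\ell$ give $\sum_p (p-1)h_p=\ell-m$. Hence $h_p=0$ for every $p>\ell-m+1$, so only the indices $p=1,\dots,\ell-m+1$ contribute. If $m>\ell$ there are no admissible $(h_p)$ at all, which matches the convention $\Om^{[1]}=0$ for $m>\ell$. For $m=\ell=0$ we use $B_{0,0}=1$; the statement is then consistent under the empty-product conventions, and we will check this case separately.

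Each monomial $\prod_{p=1}^{\ell-m+1}\phi_p(z)^{h_p}$ is a product of finitely many factors $\phi_p(z)^{h_p}$. The general Leibniz rule gives
\[
\frac{\partial^j}{\partial z^j}\prod_{p}\phi_p(z)^{h_p}=\sum_{\sigma_1+\sigma_2+\cdots=j}\binom{j}{\sigma_1,\sigma_2,\dots}\prod_p\frac{\partial^{\sigma_p}}{\partial z^{\sigma_p}}\phi_p(z)^{h_p}.
\]
Evaluating at $z=0$, Lemma~\ref{lemphi} with $(a,b,c)=(h_p,\sigma_p,p)$ turns each factor into $\Om(h_p,\sigma_p,p)$. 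Summing over the admissible $(h_p)$ reproduces the double sum defining $\Om^{[1]}(\ell,m,j)$ term by term.

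The main point needing care is that Lemma~\ref{lemphi} applies: each $\phi_p$ must be a formal power series regular at $z=0$, so that the negative powers $z^{-\Delta}$ in its proof cancel. This holds because $\frac1z\sum_{i=1}^{p+1}\frac{\log(1+z)^i}{i!}$ is analytic at $0$, since $\log(1+z)$ vanishes there. We will also check the degenerate factors with $h_p=0$: then $\phi_p^0=1$, and $\Om(0,\sigma,p)$ equals $1$ for $\sigma=0$ and $0$ otherwise, consistent with the empty multinomial sum in the definition of $\Om$.
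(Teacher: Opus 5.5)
Your proposal is correct and is essentially the paper's proof: cancelling the factors $(p!)^{h_p}$ in the monomial expansion is exactly the identity $B_{\ell,m}(1!x_1,2!x_2,\dots)=\frac{\ell!}{m!}\hat B_{\ell,m}(x_1,x_2,\dots)$ that the paper invokes, and both arguments then apply the general Leibniz rule and Lemma~\ref{lemphi} factor by factor. One small wording fix: $\Om(0,\sigma,p)=\delta_{\sigma,0}$ does not come from an empty sum, but from the single term $i_1=\dots=i_{p+2}=0$ of the sum defining $\Om$, via $s(\sigma,0)=\delta_{\sigma,0}$.
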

\proof The result follows from the known identity
\[B_{\ell,m}(1!x_1,2!x_2,3!x_3,\dots)=\frac{\ell!}{m!}\underbrace{\sum_{\sum_{j}ju_j=\ell}\binom{m}{u_1,u_2,\dots,u_{\ell-m+1}}x_1^{u_1}x_2^{u_2}\dots x_{\ell-m+1}^{u_{\ell-m+1}}}_{\text{ordinary partial Bell polynomial } \hat B_{\ell,m}(x_1,x_2,\dots)},
\]
and the general Lebnitz rule.
\endproof
So, we have
\[
b_{2\ell}(\overline{\eu M}_{0,n+1})\\
=\sum_{m=0}^\ell\sum_{k\geq m}\sum_{i+j=n-1}\binom{n+k-1}{k}\frac{k^{\underline{m}}}{m!}\binom{n-1}{i}\Om(k-m,i,0)\Om^{[1]}(\ell,m,j).
\]Notice that the sum over $k$ can be truncated, since $\Om(k-m,i,0)=0$ if $k-m>i$. In conclusion

\[b_{2\ell}(\overline{\eu M}_{0,n+1})=\sum_{m=0}^\ell\sum_{i=0}^{n-1}\sum_{k=m}^{m+i}\binom{k}{m}\binom{n+k-1}{k}\binom{n-1}{i}\Om(k-m,i,0)\Om^{[1]}(\ell,m,n-1-i).
\]

\subsection{Proof of Proposition \ref{integrprop}}\label{proofintegrprop} 
We have an equivalent expression for $\thi_n$, namely
\beq\label{bexplicit}
\thi_n = (n-1)! \sum_{m=1}^{n-1} \binom{n+m-1}{m} \sum_{\substack{i_1+\cdots+i_m=n-1\\ i_j\geq 1}} \prod_{j=1}^m \frac{i_j-1}{(i_j+1)!}.
\eeq
This follows from Lemma \ref{lemmBell}. Fix $m$ and a composition $(i_1,\dots,i_m)$ of $n-1$, and set $k_j := i_j+1 \geq 2$, so that $k_1+\cdots+k_m = n+m-1$. Using $\binom{n+m-1}{m} = \frac{(n+m-1)!}{m!\,(n-1)!}$, the corresponding summand becomes
\[
(n-1)! \binom{n+m-1}{m} \prod_{j=1}^m \frac{k_j-2}{k_j!} = \frac{1}{m!} \binom{n+m-1}{k_1,\dots,k_m} \prod_{j=1}^m (k_j-2).
\]
Letting $m_r$ count how many $k_j$ equal $r$, we have
\[
\frac{1}{m!}\binom{n+m-1}{k_1,\dots,k_m} = \frac{(n+m-1)!}{\prod_{j=1}^m k_j! \prod_r m_r!},
\]
so each summand equals $\frac{(n+m-1)!}{\prod_j k_j! \prod_r m_r!} \prod_j (k_j-2)$. This is a non-negative integer, since it counts partitions of a set of size $n+m-1$ into unlabeled blocks of sizes $k_1,\dots,k_m$, multiplied by $\prod_j(k_j-2)$. The claim follows.

\subsection{Proof of Theorem \ref{mainthm}} \label{proofmainthm}
Introduce the generating function
\[
\chi(z)=1+z+\sum_{n=3}^\infty \chi(\overline{\eu M}_{0,n})\frac{z^{n-1}}{(n-1)!}.
\]We prove that 
\[\chi(z)=\exp\sum_{n=1}^\infty\thi_n\frac{z^n}{n!},
\]and Theorem \ref{mainthm} follows from identities \eqref{bellid},\eqref{bellid2},\eqref{bellid3}.
The function $\chi(z)$ solves the Cauchy problem
\beq\label{deqman}
\frac{dy}{dz}=\frac{y}{2+z-y},\qquad y(0)=1.
\eeq 
Indeed, this follows from the identity
\(
\chi(z)=1+F(z,-1),
\)
where \(F\) is the generating function defined in \eqref{Fgen}, together with the Getzler--Manin equation \eqref{diffeqgen} in Theorem~\ref{GMthm}.

Let \(W_{-1}\) denote the real branch of the Lambert \(W\)-function satisfying
\[
W_{-1}(x)\leq -1
\qquad
\text{for }
-1/e\leq x<0.
\]
The following closed form was stated in~\cite[Sec.\,8]{AMN25}; we include a proof for completeness.

\begin{prop}We have
\[
\chi(z)=\frac{-z-2}{W_{-1}\!\left(-e^{-2}(z+2)\right)}.
\]
\end{prop}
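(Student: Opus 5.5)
We prove the identity by solving the Cauchy problem \eqref{deqman} explicitly and then matching its solution with the stated Lambert-$W$ expression, choosing the branch from the initial condition.

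\emph{The Cauchy problem.} Since $F(x,t)\in\Q[t][\![x]\!]$, the specialization $t=-1$ is legitimate coefficientwise. Substituting $t^2=1$ into \eqref{diffeqgen} gives $\partial_z F=(1+F)/(1+z-F)$. Putting $y=1+F(z,-1)=\chi(z)$ turns this into $y'=y/(2+z-y)$ with $y(0)=1$, which is \eqref{deqman}. The right-hand side is analytic near $(z,y)=(0,1)$, because there the denominator equals $1$. Hence the formal solution is unique, and it coincides with the Taylor series of the unique holomorphic solution near $z=0$. So it suffices to exhibit a holomorphic function near $0$ that solves \eqref{deqman}.

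\emph{Solving the equation.} Since $y(0)=1\neq0$, I invert the roles of the variables and regard $z$ as a function of $y$ near $y=1$. The equation becomes linear:
\[
\frac{dz}{dy}-\frac{z}{y}=\frac{2-y}{y}.
\]
With integrating factor $1/y$ this reads $\frac{d}{dy}(z/y)=2/y^2-1/y$, so $z/y=-2/y-\log y+C$. The condition $z=0$ at $y=1$ forces $C=2$, and therefore
\[
\frac{z+2}{y}=2-\log y .
\]
Set $u:=(z+2)/y$. Then $y=e^{2-u}$, and so $z+2=u\,e^{2-u}$, which is equivalent to
\[
(-u)e^{-u}=-e^{-2}(z+2).
\]
Thus $-u=W\bigl(-e^{-2}(z+2)\bigr)$ for some branch of $W$, and $y=(z+2)/u=(-z-2)/W(\cdots)$.

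\emph{Choice of branch and verification.} At $z=0$ we have $u=2$, so $W(-2e^{-2})=-2\le-1$. Since $-2e^{-2}\in(-1/e,0)$, this value lies on the branch $W_{-1}$, not on $W_0$. Moreover $-2e^{-2}$ is an interior point away from the branch point $-1/e$, and $W'=W/(x(1+W))$ is finite there because $W\neq-1$. Hence $W_{-1}$ is real-analytic near $-2e^{-2}$, and the right-hand side of the claimed formula is holomorphic near $z=0$ with value $1$ at $z=0$.

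To avoid relying on the inversion step, I would close the argument by a direct check. Put $\Phi(z)=(-z-2)/W_{-1}(-e^{-2}(z+2))$. From $We^W=x$ one gets $\log\Phi=2+W_{-1}$. Differentiating and using the formula for $W'$ shows $\Phi'=\Phi/(2+z-\Phi)$. Uniqueness then gives $\Phi=\chi$.

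The main obstacle is not the integration, which is routine. It is making the analytic and formal pieces fit together: justifying that the formal series $\chi(z)$ equals an analytic solution, via the analyticity of the right-hand side of \eqref{deqman} at $(0,1)$, and selecting $W_{-1}$ rather than $W_0$ from the value $-2$ at $z=0$.
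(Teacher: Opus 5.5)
Your proof is correct and follows the same route as the paper: invert to the linear equation $dz/dy - z/y = (2-y)/y$, integrate with the factor $1/y$ to get $z+2=2y-y\log y$, rewrite this in the Lambert form $we^{w}=-e^{-2}(z+2)$, and select $W_{-1}$ from the initial value (your $u=(z+2)/y$ equals $2-\log y$, whereas the paper uses $u=\log y$). Your explicit branch check $W(-2e^{-2})=-2\le -1$, together with the closing verification ($\log\Phi=2+W_{-1}$, $\Phi'=\Phi/(2+z-\Phi)$, and uniqueness), makes rigorous what the paper leaves implicit; one small correction is that the inversion step is justified by $y'(0)=1\neq 0$, not by $y(0)\neq 0$, although your direct check makes this point moot.
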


\begin{proof}
Inverting \eqref{deqman}, we have $\frac{dz}{dy}=\frac{2+z-y}{y}$, i.e.\ $\frac{dz}{dy}-\frac{z}{y}=\frac{2-y}{y}$. Multiplying by the integrating factor $\mu(y)=y^{-1}$ gives $\frac{d}{dy}\!\left(\frac{z}{y}\right)=\frac{2-y}{y^2}$, and integrating yields $\frac{z}{y}=-\frac{2}{y}-\log y+C$. The condition $z=0$ at $y=1$ gives $C=2$, hence $z+2=2y-y\log y$. Setting $y=e^u$ we find $e^u(2-u)=z+2$, or $(u-2)e^{u-2}=-e^{-2}(z+2)$, so $u-2=W\!\left(-e^{-2}(z+2)\right)$. Therefore
\[
y=e^u=e^2 e^{W\left(-e^{-2}(z+2)\right)}=\frac{-z-2}{W_{-1}\!\left(-e^{-2}(z+2)\right)},
\]
where the branch $W_{-1}$ is chosen so that $y(0)=1$.
\end{proof}

We can therefore write \(\chi(z)\) in the implicit form
\[
\chi(z)=e^{u(z)},\qquad
z=\Phi(u(z)),\qquad
\Phi(u)=(2-u)e^u-2.
\]Since $\Phi(0)=0$ and $\Phi'(0)\neq 0$, we can solve the equation $z=\Phi(u)$ in series $u=\sum_{n\geq 1}{a_n}z^n$. The coefficients $a_n$ are given by
\[a_n=\frac{1}{n!}\left[\frac{d^{n-1}}{dt^{n-1}}\left(\frac{t}{\Phi(t)}\right)^n\right]_{t=0},\quad n\geq 1,
\]by the Lagrange inversion formula.

\medskip

We have 
\[\frac{\Phi(t)}{t}=\sum_{k=0}^\infty \al_kt^k,\quad \text{where }\al_k:=-\frac{k-1}{(k+1)!},\quad k\geq 0.
\]
Hence $a_1=1$, while for $n\geq 2$
\begin{multline*}a_n=\frac{1}{n}[t^{n-1}]\left(\frac{t}{\Phi(t)}\right)^n=\frac{1}{n}\sum_{m=1}^{n-1}\binom{-n}{m}\sum_{\substack{i_1+\dots+i_m=n-1\\ i_j\geq 1}}\al_{i_1}\dots\al_{i_m}\\=\frac{1}{n}\sum_{m=1}^{n-1}(-1)^m\binom{n+m-1}{m}\sum_{\substack{i_1+\dots+i_m=n-1\\ i_j\geq 1}}(-1)^m\prod_{j=1}^m\frac{i_j-1}{(i_j+1)!}\\=\frac{1}{n}\sum_{m=1}^{n-1}\binom{n+m-1}{m}\sum_{\substack{i_1+\dots+i_m=n-1\\ i_j\geq 1}}\prod_{j=1}^m\frac{i_j-1}{(i_j+1)!}.
\end{multline*}
Here we have used the standard formula for the coefficient of a generalized binomial series (see Appendix \ref{appB}).
We have
\[\thi_n=n! a_n,\quad n\geq 1,
\]as it follows from equation \eqref{bexplicit}. This completes the proof of Theorem \ref{mainthm}.

\subsection{Proof of Theorem \ref{thmMZ+}}\label{secproofthmMZ+} Let $\rho=e-2$, and recall the coefficients $\om_k$ defined in \eqref{omk1},\eqref{omk2}.

\begin{lem}\label{lemchiexprho}
The radius of convergence of the series
\(
\chi(z)=1+z+\sum_{n=3}^{\infty}\chi(\overline{\eu M}_{0,n})
\frac{z^{n-1}}{(n-1)!}
\)
is $\rho$. Moreover, $z=\rho$ is the unique singularity of $\chi$ on the boundary of its disk of convergence. In a neighborhood of $z=\rho$, the function admits the Puiseux expansion
\begin{equation}\label{chiexprho}
\chi(z)
=
\sum_{\ell\ge0}
\omega_\ell
\left(1-\frac{z}{\rho}\right)^{\ell/2}.
\end{equation}
\end{lem}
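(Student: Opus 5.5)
We work from the implicit description established above: $\chi(z)=e^{u(z)}$ with $z=\Phi(u)=(2-u)e^u-2$, $u(0)=0$; equivalently, $y=\chi$ satisfies $z+2=y(2-\log y)$. The plan is to locate the singularities of $u$ via the analytic implicit function theorem. The map $u\mapsto\Phi(u)$ is locally invertible wherever $\Phi'(u)=(1-u)e^u\neq 0$, that is, away from $u=1$. The corresponding critical value is $\Phi(1)=e-2=\rho$. So the branch $u(z)$ starting at $u(0)=0$ can be continued analytically along any path that avoids $z$-values where $u$ reaches $1$ or escapes to infinity.

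\textbf{Radius of convergence.} All coefficients of $\chi$ are positive: $\chi(\overline{\eu M}_{0,n})=\mathsf P(-1)$ with only even Betti numbers, hence $\chi>0$. Pringsheim's theorem therefore places a singularity on the positive real axis at the radius $R$. On $[0,\rho)$ the real solution $u\in[0,1)$ of $\Phi(u)=z$ exists and is increasing, because $\Phi$ increases from $0$ to $\rho$ on $[0,1]$. At $z=\rho$ we have $u\to1$, where $\Phi'(1)=0$ and $\Phi''(1)=-e\neq 0$, so $u$ has a square-root branch point there. This gives $R=\rho$.

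\textbf{Uniqueness of the boundary singularity.} This is the main obstacle. We must show that for $|z|=\rho$ with $z\neq\rho$ the branch stays analytic, i.e.\ $u(z)\neq1$, and $u$ does not blow up. Since $u=1$ occurs only when $z=\Phi(1)=\rho$, a critical point can only be met at $z=\rho$ itself. Blow-up is excluded by the fact that $\Phi(u)\to$ finite values only along directions where $u\to\infty$ with $\operatorname{Re}u\to-\infty$. If that gets messy, the fallback is a direct aperiodicity/positivity argument: the coefficients $\chi_n/(n-1)!$ are positive, and $\chi(z)$ is continuous up to the boundary of $|z|<\rho$ with $|\chi(z)|<\chi(\rho)$ off the real point. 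To make this rigorous I would use the Lambert representation $\chi=-(z+2)/W_{-1}(-e^{-2}(z+2))$. The only branch point of $W$ in the relevant region is at argument $-1/e$, which corresponds to $z+2=e$, i.e.\ $z=\rho$, while $W_{-1}$ is analytic elsewhere in a slit neighbourhood away from $0$. The argument $z+2$ stays away from $0$ for $|z|\le\rho<2$, so the only boundary singularity is $z=\rho$.

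\textbf{Puiseux expansion.} With $x=-e^{-2}(z+2)$ we get $ex+1=1-(z+2)/e=(\rho-z)/e=(\rho/e)(1-z/\rho)$. Hence
\[
p=-\sqrt{2(ex+1)}=-A\,(1-z/\rho)^{1/2},\qquad A=(2\rho/e)^{1/2},
\]
with the branch choice $\operatorname{Im}x\ge0$ consistent with approaching from $|z|<\rho$. Substituting \eqref{Wexpansion} gives $W_{-1}=\sum_j\mu_j(-A)^j\tau^j$ with $\tau=(1-z/\rho)^{1/2}$. Write $\chi=\sum_\ell\omega_\ell\tau^\ell$; the defining relation $-(z+2)=\chi\,W_{-1}$ must then hold as an identity in $\tau$. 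Its left side is $-(z+2)=-e+\rho\tau^2$. Comparing coefficients of $\tau^\ell$ and using $\mu_0=-1$:
\begin{itemize}
\item at $\ell=0$: $-\omega_0=-e$, so $\omega_0=e$;
\item at $\ell\ge1$: $-\omega_\ell+\sum_{j\ge1}\mu_j(-A)^j\omega_{\ell-j}=\rho\,\delta_{\ell,2}$.
\end{itemize}
These are exactly \eqref{omk1}–\eqref{omk2}; for instance, $\omega_1=\mu_1(-A)\omega_0=-eA$.
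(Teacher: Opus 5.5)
Your proposal is correct and, in the form you finally rely on, it is the paper's proof. The representation $\chi=-(z+2)/W_{-1}(-e^{-2}(z+2))$, with branch points $x=-1/e\leftrightarrow z=\rho$ and $x=0\leftrightarrow z=-2$, gives the radius and the uniqueness of the boundary singularity; substituting $p=-A(1-z/\rho)^{1/2}$ into \eqref{Wexpansion} and comparing coefficients in $W_{-1}\chi=-(z+2)$ then yields \eqref{omk1}--\eqref{omk2}. Your Pringsheim argument for the radius is a harmless extra.

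Two points need fixing. Drop the positivity/aperiodicity remark: the bound $|\chi(z)|<\chi(\rho)$ on the circle does not by itself rule out boundary singularities. Your $\Phi$-sketch is incomplete as written, because ${\rm Re}\,u\to-\infty$ alone does not settle where $\Phi$ tends. It closes once you note that $-2$ is the only finite asymptotic value of $\Phi$; this is the same information as the logarithmic branch point of $W$ at $x=0$.
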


\proof
Recall that the branch $W_{-1}(x)$ of the Lambert $W$-function has a logarithmic branch point at $x=0$ and an algebraic branch point at $x=-1/e$. Since
\[
\chi(z)=
-\frac{z+2}
{W_{-1}\!\left(-\frac{z+2}{e^2}\right)},
\]
the singularities of $\chi$ arise from those of $W_{-1}$ under the affine change of variable
\(
x=-\frac{z+2}{e^2}.
\)
The branch point $x=-1/e$ corresponds to
\(
z=e-2=\rho,
\)
while the logarithmic branch point $x=0$ corresponds to
\(
z=-2.
\)
Since $|\rho|=e-2<2$, the closest singularity to the origin is $z=\rho$. Hence the radius of convergence of $\chi$ is $\rho$. Furthermore, the circle $|z|=\rho$ contains no other singularities, so $z=\rho$ is the unique singularity on the boundary of the disk of convergence.

The Puiseux expansion \eqref{chiexprho} follows from the local expansion of $W_{-1}$ at its algebraic branch point $x=-1/e$ (see \eqref{Wexpansion}) after the above change of variable. Indeed, substituting \eqref{Wexpansion} and \eqref{chiexprho} into the identity
\[
W_{-1}\!\left(-\frac{z+2}{e^2}\right)\chi(z)=-z-2
\]
and comparing coefficients yields the recurrence relations
\eqref{omk1} and \eqref{omk2}.
\endproof

Since the function $\chi(z)$ is analytic in a dented neighbourhood of
$z=\rho$, and $\rho$ is its unique dominant singularity, the
Flajolet--Odlyzko--Sedgewick Transfer Theorem
\cite{FO90}\cite[Ch.\,VI.4]{FS09} applies to the singular expansion of
Lemma~\ref{lemchiexprho}.

For $\alpha\notin\{0,1,2,\ldots\}$, the exact coefficient formula
\[
[z^n]\left(1-\frac{z}{\rho}\right)^\alpha
=
\rho^{-n}
\frac{\Gamma(n-\alpha)}
{\Gamma(-\alpha)\Gamma(n+1)}
\]
together with the asymptotic expansion of the quotient of Gamma
functions gives
\[
\frac{\Gamma(n-\alpha)}{\Gamma(n+1)}
\sim
n^{-\alpha-1}
\sum_{r\geq 0}\frac{C_r(\alpha)}{n^r},
\qquad C_0(\alpha)=1,
\]
where the $C_r(\alpha)$ are polynomials in $\alpha$. In particular,
\[
\frac{\Gamma(n-\alpha)}{\Gamma(n+1)}
\sim
n^{-\alpha-1}
\left(
1+
\frac{\alpha(\alpha+1)}{2n}
+
\frac{\alpha(\alpha+1)(\alpha+2)(3\alpha+1)}
{24n^2}
+\cdots
\right).
\]

Only the non-integral powers in the Puiseux expansion contribute to the
asymptotic expansion of the coefficients. Writing the singular part of
the expansion of $\chi$ as
\[
\chi(z)
\sim
\sum_{k\geq 0}
\omega_{2k+1}
\left(1-\frac{z}{\rho}\right)^{k+\frac12},
\]
we therefore obtain
\[
[z^n]\chi(z)
\sim
\rho^{-n}
\sum_{k\geq 0}
\frac{\omega_{2k+1}}{\Gamma\left(-k-\frac12\right)}
n^{-k-\frac32}
\sum_{r\geq 0}
\frac{C_r\left(k+\frac12\right)}{n^r}.
\]
Equivalently, after collecting terms of the same order,
\[
[z^n]\chi(z)
\sim
\rho^{-n}
\sum_{m\geq 0}
\frac{d_m}{n^{m+\frac32}},
\]
where
\[
d_m
=
\sum_{k=0}^{m}
\frac{\omega_{2k+1}}
{\Gamma\left(-k-\frac12\right)}
C_{m-k}\left(k+\frac12\right).
\]

Consequently,
\[
\chi(\overline{\eu M}_{0,n+1})
=
n![z^n]\chi(z)
\sim
n!\rho^{-n}
\sum_{m\geq 0}
\frac{d_m}{n^{m+\frac32}}.
\]

Using Stirling's expansion
\[
n!
\sim
\left(\frac{n}{e}\right)^n
\sqrt{2\pi n}
\sum_{j\geq 0}\frac{a_j}{n^j},
\qquad
\text{$a_j$ as in \eqref{stircoeffs},}
\]
we obtain
\[
\chi(\overline{\eu M}_{0,n+1})
\sim
\left(\frac{n}{e\rho}\right)^n
\sqrt{2\pi}
\sum_{j,m\geq 0}
\frac{a_jd_m}{n^{j+m+1}}.
\]
Collecting the terms with fixed $N=j+m+1$ yields
\[
\chi(\overline{\eu M}_{0,n+1})
\sim
\left(\frac{n}{e\rho}\right)^n
\sum_{N\geq 1}\frac{b_N}{n^N},\qquad b_N
=
\sqrt{2\pi}
\sum_{m=0}^{N-1}
a_{N-1-m}d_m.
\]
Substituting the expression for $d_m$, and invoking the identity
\[
\Gamma\left(-k-\frac12\right)
=
\frac{(-4)^{k+1}(k+1)!}{(2k+2)!}\sqrt{\pi},
\]
we may write
\[
b_N
=
\sqrt{2}
\sum_{m=0}^{N-1}
a_{N-1-m}
\sum_{k=0}^{m}
\omega_{2k+1}
\frac{(2k+2)!}
{(-4)^{k+1}(k+1)!}
C_{m-k}\left(k+\frac12\right).
\]
This gives the complete asymptotic expansion.

\appendix
\section{Bell polynomials}\label{appbell}
General references are \cite{Bel28,Bel34,Com74}.

\medskip

The (exponential) \emph{partial Bell polynomials} $B_{n,k}(x_1,\dots,x_{n-k+1})$ are defined by
\begin{equation}
B_{n,k}(x_1,\dots,x_{n-k+1})
=
\sum_{\substack{k_1 + \cdots + k_{n-k+1} = k \\[2pt]
k_1 + 2k_2 + \cdots + (n-k+1)k_{n-k+1} = n}}
\frac{n!}{k_1! \cdots k_{n-k+1}!}
\prod_{j=1}^{n-k+1} \left(\frac{x_j}{j!}\right)^{k_j}.
\end{equation}
Equivalently, $B_{n,k}$ admits the combinatorial representation
\begin{equation}\label{eqbellpart}
B_{n,k}(x_1,\dots,x_{n-k+1})
=
\sum_{\pi \in \Pi_{n,k}} \prod_{B \in \pi} x_{|B|},
\end{equation}
where $\Pi_{n,k}$ denotes the set of partitions of an $n$-element set into exactly $k$ blocks, and $|B|$ is the cardinality of a block $B$.

Their exponential generating function is given by
\begin{equation}
\frac{1}{k!}
\left( \sum_{m \ge 1} x_m \frac{t^m}{m!} \right)^k
=
\sum_{n \ge k} B_{n,k}(x_1,\dots,x_{n-k+1}) \frac{t^n}{n!}.
\end{equation}

The following result easily follows from the identities above.

\begin{lem}\label{lemmBell}
Let $n\ge m\ge 1$. The following identities hold:
\begin{multline*}
[t^{\,n}]\left(\sum_{k\ge 1} a_k t^k\right)^m
=\frac{m!}{n!}\,
B_{n,m}\big(1!a_1,\,2!a_2,\,3!a_3,\dots\big)
=\sum_{\substack{i_1+\cdots+i_m=n\\ i_j\ge 1}}
\prod_{j=1}^m a_{i_j}\\
=\sum_{\substack{k_1,k_2,\dots\ge 0\\ \sum k_j=m\\ \sum j k_j=n}}
\frac{m!}{k_1!k_2!\cdots}\prod_{j\ge 1} a_j^{k_j}
=\sum_{\substack{\lambda \vdash n\\ \ell(\lambda)=m}}
\frac{m!}{z_\lambda}\prod_{j\ge 1} a_j^{m_j},
\end{multline*}
where $\lambda=(1^{m_1}2^{m_2}\cdots)$, $\ell(\lambda)=\sum_{j\ge 1} m_j$, and
$z_\lambda=\prod_{j\ge 1} j^{m_j}m_j!$.
\qed
\end{lem}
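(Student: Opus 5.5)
The plan is to prove the chain of equalities in Lemma~\ref{lemmBell} by expanding the $m$-th power of $\sum_{k\ge1}a_kt^k$ directly, and then regrouping the result in three ways. Every step is a formal manipulation in $\Q[a_1,a_2,\dots][\![t]\!]$, so no convergence questions arise.

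\emph{Step 1 (compositions).} Write $\bigl(\sum_{k\ge1}a_kt^k\bigr)^m$ as a product of $m$ copies and distribute. The monomial $t^n$ arises exactly from choosing an index $i_j\ge1$ in the $j$-th factor with $i_1+\cdots+i_m=n$. This gives
\[
[t^n]\Bigl(\sum_{k\ge1}a_kt^k\Bigr)^m=\sum_{\substack{i_1+\cdots+i_m=n\\ i_j\ge1}}\prod_{j=1}^m a_{i_j}.
\]

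\emph{Step 2 (multiplicities).} Group the compositions $(i_1,\dots,i_m)$ according to their multiplicity vector $(k_1,k_2,\dots)$, where $k_j=\#\{r: i_r=j\}$. The constraints become $\sum_j k_j=m$ and $\sum_j jk_j=n$. All compositions with a given multiplicity vector contribute the same monomial $\prod_j a_j^{k_j}$, and there are exactly $m!/\prod_j k_j!$ of them (arrangements of a multiset). This yields the sum over $(k_1,k_2,\dots)$.

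\emph{Step 3 (Bell polynomials).} Substitute $x_j=j!\,a_j$ into the exponential generating function
\[
\frac1{m!}\Bigl(\sum_{j\ge1}x_j\frac{t^j}{j!}\Bigr)^m=\sum_{n\ge m}B_{n,m}(x_1,x_2,\dots)\frac{t^n}{n!}.
\]
The left side becomes $\frac1{m!}\bigl(\sum_j a_jt^j\bigr)^m$, and comparing coefficients of $t^n$ gives the first equality, $[t^n](\cdot)^m=\frac{m!}{n!}B_{n,m}(1!a_1,2!a_2,\dots)$. As a consistency check, plug $x_j=j!a_j$ into the explicit defining formula of $B_{n,m}$. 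The $j!$ factors cancel and one gets $\frac{n!}{\prod_j k_j!}\prod_j a_j^{k_j}$, which reproduces Step 2 after multiplying by $m!/n!$.

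\emph{Step 4 (partitions).} The last expression is a pure relabeling of Step 2. A multiplicity vector $(k_1,k_2,\dots)$ with $\sum_j jk_j=n$ and $\sum_j k_j=m$ is the same thing as an integer partition $\lambda=(1^{m_1}2^{m_2}\cdots)\vdash n$ with $\ell(\lambda)=m$, where $m_j=k_j$.

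I expect this last step to be the only delicate point, and it is a matter of normalization rather than ideas. The coefficient produced in Step 2 is $m!/\prod_j m_j!$, whereas the statement writes $m!/z_\lambda$ with $z_\lambda=\prod_j j^{m_j}m_j!$. These differ by the factor $\prod_j j^{m_j}$. For example, with $n=3$, $m=2$ the true coefficient of $a_1a_2$ is $2$, while $m!/z_{(2,1)}=1$. In writing the proof I would therefore state the last form with the denominator $\prod_j m_j!$. This is the version used elsewhere in the paper, for instance in the proof of Proposition~\ref{integrprop}, where $m_r$ counts the repeated parts. I would also flag that the symbol $z_\lambda$ in the statement should be read as $\prod_j m_j!$ for the identity to hold.
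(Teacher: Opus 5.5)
Your proof is correct and takes the route the paper intends: the paper gives no separate argument beyond saying the lemma follows from the exponential generating function and the explicit formula for $B_{n,m}$, and your Steps 1--3 are exactly that. Your flag on the final form is also right. With $z_\lambda=\prod_j j^{m_j}m_j!$ the last expression already fails for $m=1$, $n=2$, giving $a_2/2$ instead of $a_2$. So the denominator must be $\prod_j m_j!$; equivalently, keep $z_\lambda$ but replace $a_j$ by $j\,a_j$. Since the paper only uses the composition and Bell-polynomial forms elsewhere, nothing downstream is affected.
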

\medskip

The (exponential) \emph{complete Bell polynomials} $B_n(x_1,\dots,x_n)$ are obtained by summing over $k$:
\begin{equation}
B_n(x_1,\dots,x_n)
=
\sum_{k=1}^{n} B_{n,k}(x_1,\dots,x_{n-k+1}).
\end{equation}
Equivalently, they admit the explicit form
\begin{equation}
B_n(x_1,\dots,x_n)
=
\sum_{\substack{k_1 + 2k_2 + \cdots + n k_n = n}}
\frac{n!}{k_1! k_2! \cdots k_n!}
\prod_{j=1}^{n} \left(\frac{x_j}{j!}\right)^{k_j},
\end{equation}
and the representation in terms of set partitions
\begin{equation}
B_n(x_1,\dots,x_n)
=
\sum_{\pi \in \mathcal{P}(n)} \prod_{B \in \pi} x_{|B|},
\end{equation}
where $\mathcal{P}(n)$ denotes the set of all partitions of an $n$-element set and $|B|$ is the cardinality of a block $B$.

Their exponential generating function is
\begin{equation}\label{bellid}
\exp\left( \sum_{k \ge 1} x_k \frac{t^k}{k!} \right)
=
\sum_{n \ge 0} B_n(x_1,\dots,x_n)\frac{t^n}{n!}.
\end{equation}

Moreover, $B_n(x_1,\dots,x_n)$ admits the determinant representations
\begin{align}\label{bellid2}
B_{n}(x_{1},\dots,x_{n})
&=\det\begin{pmatrix}
x_{1} & \binom{n-1}{1}x_{2} & \binom{n-1}{2}x_{3} & \binom{n-1}{3}x_{4} & \cdots & \cdots & x_{n} \\
-1   & x_{1}               & \binom{n-2}{1}x_{2} & \binom{n-2}{2}x_{3} & \cdots & \cdots & x_{n-1} \\
0    & -1                  & x_{1}               & \binom{n-3}{1}x_{2} & \cdots & \cdots & x_{n-2} \\
0    & 0                   & -1                  & x_{1}               & \cdots & \cdots & x_{n-3} \\
0    & 0                   & 0                   & -1                  & \cdots & \cdots & x_{n-4} \\
\vdots & \vdots            & \vdots              & \vdots              & \ddots & \ddots & \vdots \\
0    & 0                   & 0                   & 0                   & \cdots & -1     & x_{1}
\end{pmatrix}\\\label{bellid3}
\end{align}
\begin{align}
&=\det\begin{pmatrix}
\frac{x_{1}}{0!} & \frac{x_{2}}{1!} & \frac{x_{3}}{2!} & \frac{x_{4}}{3!} & \cdots & \cdots & \frac{x_{n}}{(n-1)!} \\
-1   & \frac{x_{1}}{0!} & \frac{x_{2}}{1!} & \frac{x_{3}}{2!} & \cdots & \cdots & \frac{x_{n-1}}{(n-2)!} \\
0    & -2                  & \frac{x_{1}}{0!}               & \frac{x_{2}}{1!} & \cdots & \cdots & \frac{x_{n-2}}{(n-3)!} \\
0    & 0                   & -3                  & \frac{x_{1}}{0!}               & \cdots & \cdots & \frac{x_{n-3}}{(n-4)!} \\
0    & 0                   & 0                   & -4                  & \cdots & \cdots & \frac{x_{n-4}}{(n-5)!} \\
\vdots & \vdots            & \vdots              & \vdots              & \ddots & \ddots & \vdots \\
0    & 0                   & 0                   & 0                   & \cdots & -(n-1)     & \frac{x_{1}}{0!}
\end{pmatrix}
\end{align}

\section{Generalized binomial series}\label{appB}
Let $A$ be a commutative $\Q$-algebra and let
\[
f(x)=1+\sum_{j\ge 1}c_j x^j\in A[\![x]\!].
\]
For any $k\in A$, define
\[
f(x)^k:=\sum_{m\ge 0}\binom{k}{m}\bigl(f(x)-1\bigr)^m\in A[\![x]\!],
\qquad
\binom{k}{m}:=\frac{k(k-1)\cdots(k-m+1)}{m!}.
\]
Writing $f(x)^k=\sum_{n\ge 0}c_n^{(k)}x^n$, one has $c_0^{(k)}=1$ and, for $n\ge 1$,
\beq\label{coefformexp}
c^{(k)}_n=\sum_{m=1}^{n}\binom{k}{m}\!\!\sum_{\substack{i_1+\cdots+i_m=n\\ i_j\ge 1}} c_{i_1}\cdots c_{i_m}.
\eeq
Moreover, the same construction applies to any $k(t)\in A[\![t]\!]$: setting
\[
f(x)^{k(t)}:=\exp \bigl(k(t)\,\log f(x)\bigr)\in A[\![t,x]\!],
\]
one has
\[
f(x)^{k(t)}=\sum_{m\ge 0}\binom{k(t)}{m}\bigl(f(x)-1\bigr)^m \quad\text{in }A[\![t,x]\!],
\]
hence the coefficient of $x^n$ is still given by the same formula with $\binom{k}{m}$ replaced by $\binom{k(t)}{m}$.
In particular this includes the case $k(t)=t$.

\begin{rem}
With the definition $f(x)^{k(t)}:=\exp \bigl(k(t)\log f(x)\bigr)\in A[\![t,x]\!]$ (for $f\in 1+xA[\![x]\!]$, $k(t)\in A[\![t]\!]$, $A$ a commutative $\Q$-algebra), the usual exponent rules hold in $A[\![t,x]\!]$: for $k_1,k_2\in A[\![t]\!]$,
\[
f^{k_1+k_2}=f^{k_1}f^{k_2},\qquad (f^{k_1})^{k_2}=f^{k_1k_2},
\]
and for $g\in 1+xA[\![x]\!]$,
\[
(fg)^k=f^k g^k.
\]
Indeed, $\log(fg)=\log f+\log g$ and $\exp(a+b)=\exp(a)\exp(b)$, as here all series commute.
\end{rem}

\section{The $C_k$-polynomials}\label{appC}

For a fixed complex parameter \(\alpha\), we define the coefficient polynomials
\(C_k(\alpha)\) by the asymptotic expansion, as \(n\to+\infty\),
\[
\frac{\Gamma(n-\alpha)}{\Gamma(n+1)}
\sim
n^{-\alpha-1}\sum_{k\geq 0}\frac{C_k(\alpha)}{n^k}.
\]
Here \(C_0(\alpha)=1\), while \(C_k(\alpha)\in\mathbb{Q}[\alpha]\) has degree
\(2k\) for \(k\geq 1\). The first terms are
\[
C_1(\alpha)=\frac{\alpha(\alpha+1)}{2},\qquad
C_2(\alpha)=\frac{\alpha(\alpha+1)(3\alpha^2+7\alpha+2)}{24},\qquad
C_3(\alpha)=\frac{\alpha^2(\alpha+1)^2(\alpha+2)(\alpha+3)}{48}.
\]

A compact way to compute the sequence is obtained by writing
\[
\log\left(
n^{\alpha+1}\frac{\Gamma(n-\alpha)}{\Gamma(n+1)}
\right)
\sim
\sum_{m\geq 1}\frac{L_m(\alpha)}{n^m},
\qquad
L_m(\alpha)=
\frac{(-1)^{m+1}}{m(m+1)}
\left({\sf B}_{m+1}(-\alpha)-{\sf B}_{m+1}(1)\right),
\]
where the ordinary Bernoulli polynomials \({\sf B}_m(x)\) are defined by
\[
\frac{t e^{xt}}{e^t-1}
=
\sum_{m\geq 0}{\sf B}_m(x)\frac{t^m}{m!}.
\]
Equivalently, the polynomials \(C_k(\alpha)\) are determined by
\(C_0(\alpha)=1\) and
\[
C_k(\alpha)=
\frac{1}{k}\sum_{j=1}^k j\,L_j(\alpha)\,C_{k-j}(\alpha),
\qquad k\geq 1.
\]

They also admit the closed form
\[
C_k(\alpha)
=
(-1)^k\frac{(\alpha+1)_k}{k!}\,
{\sf B}^{(-\alpha)}_k(-\alpha),
\]
where \((\alpha+1)_k\) denotes the rising factorial and the generalized
Bernoulli polynomials are defined by
\[
\left(\frac{t}{e^t-1}\right)^\lambda e^{xt}
=
\sum_{k\geq 0}{\sf B}^{(\lambda)}_k(x)\frac{t^k}{k!}.
\]

Finally, for non-negative integers \(m\), the specialization
\(\alpha=-m\) reflects the elementary identity
\(\Gamma(n+m)/\Gamma(n+1)=(n+1)_{m-1}\) for \(m\geq 1\). Hence the
asymptotic expansion truncates: for \(m\geq 1\),
\[
C_k(-m)=0\qquad\text{for all }k\geq m,
\]
while \(C_k(0)=0\) for all \(k\geq 1\).

\end{document}